\documentclass[11pt]{article}

\usepackage[letterpaper,margin=1in]{geometry}
\usepackage{amsthm,mathtools,amsfonts,amssymb,bm,graphicx,booktabs}
\usepackage[ruled,vlined]{algorithm2e}
\usepackage[numbers,sort&compress]{natbib}
\usepackage{xcolor}
\usepackage{microtype}
\usepackage[hidelinks,hypertexnames=false]{hyperref}
\hypersetup{
  pdftitle={Weak Convergence Rates for Partial-Sum Processes of Nonlinear Stochastic Approximation},
  pdfauthor={Xiang Li, Jiadong Liang, and Zhihua Zhang},
  pdfkeywords={stochastic approximation, functional central limit theorem,
    Gaussian approximation, Prokhorov distance}
}

\allowdisplaybreaks
\theoremstyle{plain}
\newtheorem{theorem}{Theorem}[section]
\newtheorem{lemma}[theorem]{Lemma}
\theoremstyle{definition}
\newtheorem{definition}[theorem]{Definition}
\newtheorem*{definition*}{Definition}
\newtheorem{assumption}{Assumption}
\newtheorem*{remark}{Remark}

\newcommand{\R}{\mathbb{R}}
\newcommand{\E}{\mathbb{E}}
\newcommand{\Pp}{\mathbb{P}}
\newcommand{\rateNL}{a_{\mathrm{nl}}}
\newcommand{\rateCov}{a_{\mathrm{cov}}}
\newcommand{\rateOsc}{a_{\mathrm{osc}}}
\newcommand{\rateSm}{a_{\mathrm{sm}}}
\newcommand{\cF}{\mathcal{F}}
\newcommand{\bx}{\bm{x}}
\newcommand{\bmM}{\bm{M}}
\newcommand{\bPhi}{\bm{\Phi}}
\newcommand{\bG}{\bm{G}}
\newcommand{\bS}{\bm{S}}
\newcommand{\bI}{\bm{I}}
\newcommand{\bA}{\bm{A}}
\newcommand{\bB}{\bm{B}}
\newcommand{\bV}{\bm{V}}
\newcommand{\bJ}{\bm{J}}
\newcommand{\bN}{\bm{N}}
\newcommand{\bDelta}{\bm{\Delta}}
\newcommand{\bepsilon}{\bm{\epsilon}}
\newcommand{\btheta}{\bm{\theta}}
\newcommand{\br}{\bm{r}}
\newcommand{\by}{\bm{y}}
\newcommand{\bz}{\bm{z}}
\newcommand{\bpsi}{\bm{\psi}}
\newcommand{\bZ}{\bm{Z}}
\newcommand{\bW}{\bm{W}}
\newcommand{\norm}[1]{\left\lVert #1\right\rVert}
\newcommand{\tnorm}[1]{\left\lvert\!\left\lvert\!\left\lvert #1
  \right\rvert\!\right\rvert\!\right\rvert}
\newcommand{\suppappendixref}[1]{Appendix~\ref{#1}}

\makeatletter
\newcommand{\appendixtableofcontents}{%
  \section*{Contents of the Appendix}%
  \begingroup
    \setcounter{tocdepth}{2}%
    \@starttoc{apc}%
  \endgroup
}
\newcommand{\redirectappendixcontents}{%
  \let\appendix@addtocontents\addtocontents
  \renewcommand{\addtocontents}[2]{%
    \def\appendix@file{##1}%
    \def\appendix@toc{toc}%
    \ifx\appendix@file\appendix@toc
      \appendix@addtocontents{apc}{##2}%
    \else
      \appendix@addtocontents{##1}{##2}%
    \fi
  }%
}
\makeatother

\newcommand{\isarxivpreprint}{}

\title{Weak Convergence Rates for Partial-Sum Processes of Nonlinear
Stochastic Approximation}
\author{%
  Xiang Li \qquad Jiadong Liang \qquad Zhihua Zhang\\[0.5em]
  \normalsize School of Mathematical Sciences, Peking University\\
  \small \href{mailto:lx10077@pku.edu.cn}{\texttt{lx10077@pku.edu.cn}}\quad
  \href{mailto:jdliang@pku.edu.cn}{\texttt{jdliang@pku.edu.cn}}\quad
  \href{mailto:zhzhang@math.pku.edu.cn}{\texttt{zhzhang@math.pku.edu.cn}}%
}
\date{}

\begin{document}

\maketitle

\begin{abstract}
Stochastic approximation provides a general framework for
online estimation and optimization. Statistical inference
based on the resulting estimates requires understanding
their fluctuations around the target. For Polyak--Ruppert
averaging, functional central limit theorems describe the
normalized cumulative estimation errors through a Brownian
limit. However, weak convergence alone does not determine how
quickly the distribution of this process approaches that limit.
We establish a quantitative functional central limit theorem for
nonlinear stochastic approximation with polynomially decreasing step sizes
and martingale-difference noise. Under suitable regularity and
moment conditions, we derive an explicit finite-sample bound
on the Prokhorov distance between each fixed scalar projection
of the normalized partial-sum process and its Brownian
limit. The bound reveals a step-size tradeoff: faster
step-size decay reduces the contribution of the nonlinear
remainder but increases that of the smoothing induced by
the recursion.
To assess the sharpness of the upper bound, we develop
four lower-bound constructions, each designed to isolate
one source of error. They show that covariance stabilization,
finite-moment noise, algorithmic smoothing, and nonlinearity
can each separately limit the Brownian approximation rate of
the SA path.

\end{abstract}

\medskip
\noindent\textbf{MSC2020 subject classifications.}
Primary 60F17; secondary 62L20, 62E17, 60B10.

\smallskip
\noindent\textbf{Keywords and phrases.}
Stochastic approximation, functional central limit theorem,
Gaussian approximation, Prokhorov distance.

\section{Introduction}
\label{sec:rate-introduction}

Stochastic approximation (SA) provides a simple way to learn from a stream of noisy observations. Starting with the Robbins--Monro procedure for stochastic root finding \cite{robbins1951stochastic}, SA has developed into a general framework for stochastic optimization and learning
\cite{benveniste2012adaptive,kushner2003stochastic,borkar2009stochastic}.
It underlies derivative-free optimization and stochastic-gradient methods \cite{kiefer1952stochastic,bottou2018optimization}, recursive identification and adaptive filtering \cite{benveniste2012adaptive}, and temporal-difference
and Q-learning algorithms \cite{watkins1989learning,tsitsiklis1994asynchronous}. In all these examples, each new observation is processed through one recursive update. The full data set need not be stored, and no growing optimization problem needs to be solved. This feature is especially useful when data arrive along a time series, a single simulation run, or a controlled Markov trajectory \cite{liang2010trajectory,even2023markov}.

Under standard stability conditions and a suitably decaying step size, the SA iterates converge to the target \cite{kushner2003stochastic}. Convergence alone, however, is not enough for statistical inference: one must also understand how the iterates fluctuate around the target. Classical work characterized these fluctuations through asymptotic normality \cite{sacks1958asymptotic,fabian1968asymptotic}. Ruppert and Polyak then showed that a simple running average of the iterates can make SA asymptotically efficient at essentially no additional computational cost
\cite{ruppert1988efficient,polyak1990new,polyak1992acceleration}. The resulting central limit theorem was later extended to broad martingale- and Markov-noise settings
\cite{kushner1993stochastic,liang2010trajectory}, and the local asymptotic optimality of averaged stochastic-gradient methods was studied in \cite{duchi2021asymptotic}. This theory has also led to online methods for estimating the limiting covariance and constructing confidence sets \cite{godichon2019online,chen2020statistical,zhu2021online,roy2023online,fang2018online}. These results, however, concern the averaged estimator at a fixed terminal horizon, rather than its evolution over the entire path.

To make this distinction precise, let $\bx_t$ denote the $t$-th SA iterate and $\bx^\star$ the target solution. The Polyak--Ruppert estimator at horizon $T$ is $\overline{\bx}_T=T^{-1}\sum_{t=1}^T\bx_{t+1}$, as in Algorithm~\ref{alg:nonlinear-pr-sa}. The same iterates also generate the partial-sum process $\bPhi_T:[0,1]\to\R^d$, a random continuous mapping obtained by interpolating the normalized cumulative errors. Its values at the grid points are
\[
  \bPhi_T(n/T)=\frac{1}{\sqrt T}\sum_{t=1}^n(\bx_{t+1}-\bx^\star),
  \qquad n=0,\ldots,T,
\]
with an empty sum equal to zero. Between these points, $\bPhi_T$ varies linearly, so it is defined at every time $r\in[0,1]$.
At $r=1$, this process equals $\sqrt{T}(\overline{\bx}_T-\bm x^\star)$. Therefore, the classical central limit theorem describes only the terminal value of a full stochastic path. The rest of the path is also useful for statistical inference. Batch-means estimators use its increments over different time blocks \citep{chen2020statistical,zhu2021batchmeans}, while self-normalized and random-scaling methods use the path to remove the unknown limiting covariance \citep{lee2021fast}. Procedures that conduct inference at several intermediate times likewise require joint control over the process.

A growing literature has established functional central limit theorems for this partial-sum path under decreasing step sizes
\citep{lee2021fast,li2021statistical,li2023nonlinear,li2025stream}. These results show that the full path converges weakly to a Gaussian process and justify path-based inference asymptotically. For constant-step SA, Xie and Zhang~\cite{xie2022statistical} establish a related FCLT for partial sums centered at the stationary mean, which may differ from $\bx^\star$, and use it for random-scaling inference. These functional limit results do not, however, quantify the approximation error at a finite horizon $T$. By contrast, quantitative Gaussian-approximation bounds are available for the Polyak--Ruppert estimator
\citep{anastasiou2019normal,shao2022berry,samsonov2024gaussian, sheshukova2026gaussian}, but these bounds control only the terminal value of the partial-sum process, not its block increments, maxima, or other path-dependent functionals. The open question is how fast its law approaches that limit.
Answering this question is not a direct extension of the endpoint theory. A functional approximation must control both errors that accumulate over the full horizon and local fluctuations that may cancel at the terminal time but remain visible along the path. These errors are shaped by the nonlinear SA dynamics, the noise process, and the smoothing induced by the recursive
updates. These observations lead to the central question of this paper:
\begin{center}
\emph{What is the rate of weak convergence in the functional central limit theorem for the partial-sum process associated with Polyak--Ruppert averaging?}
\end{center}

\subsection{Our contributions}
\label{sec:rate-contributions}

We study nonlinear SA with a polynomially decaying step size and martingale-difference noise whose conditional $p$-th moments are uniformly bounded for some $p>2$.
Our main result quantifies how accurately Brownian motion approximates the cumulative estimation error over an entire run of nonlinear SA.%
\ifdefined\isarxivpreprint
\footnote{A preliminary version of our upper bound appeared in our earlier work \cite[Theorem~5]{li2023nonlinearv2}. The present paper substantially develops and improves that result; see the comparison at the end of Section~\ref{sec:rate-upper-bounds}.
}
\fi
For a fixed direction $\btheta\in\R^d$, the scalar path $\btheta^\top\bPhi_T$ records this error along that direction. We compare its law with that of $\btheta^\top\bZ_\star$, where $\bZ_\star: [0,1] \to \R^d$ is a mean-zero Brownian motion with structured covariance. Under polynomially decaying step sizes and common quantitative assumptions (stated in Section~\ref{sec:rate-quantitative-assumptions}), we obtain the following bound in the Prokhorov distance, denoted by $d_{\mathrm P}(\cdot,\cdot)$, up to logarithmic factors:
\[
d_{\mathrm P}\!\left(\btheta^\top\bPhi_T,\btheta^\top\bZ_\star\right)
\lesssim
\underbrace{T^{-\rateNL}}_{\text{nonlinearity}}
+\underbrace{T^{-\rateCov}}_{\text{covariance error}}
+\underbrace{T^{-\rateOsc}}_{\text{noise oscillations}}
+\underbrace{T^{-\rateSm}}_{\text{algorithmic smoothing}}.
\]

The four terms in the bound reflect different features of the problem. First, the nonlinear recursion differs from its linear approximation near the target. Better control of the iterate errors makes this approximation more accurate. Second, the accumulated conditional covariance of the noise may differ from that of the Brownian target. Third, rare large noise increments may create local fluctuations that remain visible along the path. Finally, each noise input affects several subsequent iterates, so its contribution to the partial-sum process builds up gradually. We refer to this effect of the recursion as algorithmic smoothing, since spreading each noise input across several updates can reduce abrupt local fluctuations in the partial-sum path.

The four exponents quantify these effects. For step sizes $\eta_t=\eta t^{-\alpha}$, they are
\[
\rateNL\coloneqq\frac{(\alpha-1/2)q}{q+1},\qquad
\rateCov\coloneqq\gamma,\qquad
\rateOsc\coloneqq \frac{p-2}{2(p+1)},\qquad
\rateSm\coloneqq \frac{1-\alpha}{2}.
\]
Here $p>2$ is the order of the uniformly bounded conditional noise moment, while $2q$ is the order of the controlled iterate (that is $\bx_t -\bx^\star$) moment. Larger values of $p$ and $q$ give stronger control of rare large noise increments and iterate errors, respectively. The parameter $\gamma$ describes how quickly the cumulative conditional covariance approaches its limiting linear profile. These quantities are specified in Assumptions~\ref{ass:rate-gain}--\ref{ass:rate-trajectory-stability}. A larger exponent means faster decay of the corresponding term in the bound. In particular, increasing $\alpha$ improves the nonlinear exponent but worsens the smoothing exponent, revealing the step-size tradeoff.

To assess whether the four rates can be improved, we study each source of error separately, using a different class of models in each case. For the two noise terms \(T^{-\rateCov}\) and \(T^{-\rateOsc}\), we construct models where a mismatch in accumulated variability or rare large noise inputs can limit the accuracy of Brownian approximation. For algorithmic smoothing, even a scalar SA model with Gaussian noise can produce a partial-sum path that is locally smoother than Brownian motion, leading to the lower bound \(T^{-\rateSm}\sqrt{\log T}\). For nonlinearity, we construct a model in which the error from linearization persists over many updates, yielding a lower bound of order \(T^{-\rateNL}\).
We then combine these results into an overall lower bound for the SA path over a union class containing all four constructed models. This allows us to study the four sources separately, without requiring them to occur together in a single model. Section~\ref{sec:rate-lower-bounds} gives the constructions, their conditions, and the argument combining their lower bounds.

In summary, our results quantify the accuracy of Brownian approximation over a finite run of nonlinear SA and clarify how it depends on the noise, the nonlinear dynamics, and the choice of step size.

\subsection{Notation}
\label{sec:rate-notation}

For scalars \(a,b\), we use \(a\vee b\) and \(a\wedge b\) for
\(\max\{a,b\}\) and \(\min\{a,b\}\), respectively, and, for
\(x\in\R\), write \((x)_+\coloneqq\max\{x,0\}\).  For nonnegative
quantities indexed by an asymptotic parameter,
\(a\lesssim b\) means \(a\le Cb\) for a constant independent of that
parameter. Here \(\gtrsim\) denotes the reverse relation, and \(a\asymp b\) means
that both relations hold.  Throughout the proofs, unsubscripted
\(C,c\in(0,\infty)\) denote generic constants whose values may change from
line to line.  Unless stated otherwise, they may depend on fixed model
parameters but not on \(T\) or on an auxiliary tail threshold.  Named or
subscripted constants remain fixed within the statement in which they occur.
We use \(\Pp\) and \(\E\) for probability and expectation, respectively.
For a probability law \(P\), \(\E_P\) denotes expectation under \(P\).
When the random element is displayed explicitly, we equivalently write
\(\E_{X\sim P}\).  Let \(\mathcal L(X)\) denote the law of a random
element \(X\).  For random elements \(X_T,X\) in a common metric space, we
write \(X_T\Rightarrow X\) to mean that \(\mathcal L(X_T)\) converges weakly
to \(\mathcal L(X)\).

Vectors are written in boldface.  For a vector, \(\|\cdot\|\) denotes the
Euclidean norm.  For a random vector \(\bm X\) and \(r\ge1\), write
\(\|\bm X\|_{L^r}\coloneqq
\bigl(\E[\|\bm X\|^r]\bigr)^{1/r}\).

Matrices are also written in boldface.  For a matrix, \(\|\cdot\|\) denotes
the induced operator norm.  We write \(\bI\) for the identity matrix of the
dimension determined by context and \(\bA^\top\) for the transpose of \(\bA\)
and, when \(\bA\) is invertible,
\(\bA^{-\top}\coloneqq(\bA^{-1})^\top\).

For a function \(f:[0,1]\to\R^m\), let
\(\tnorm{f}\coloneqq\sup_{0\le r\le1}\|f(r)\|\).  We write
\(C([0,1],\R)\) for the set of continuous functions
\(f:[0,1]\to\R\), and \(D([0,1],\R)\) for the set of functions
\(f:[0,1]\to\R\) that are right-continuous at every \(t\in[0,1)\)
and have a finite left limit
\(f(t-)\coloneqq\lim_{s\uparrow t}f(s)\) at every \(t\in(0,1]\).
Thus \(C([0,1],\R)\subset D([0,1],\R)\).

\subsection{Organization}
\label{sec:organization}


The remainder of the paper is organized as follows.
Section~\ref{sec:rate-preliminaries} introduces the problem setup, notation, and assumptions.
Section~\ref{sec:rate-model-results} presents our upper and lower bounds. Specifically, Subsection~\ref{sec:rate-upper-bounds} states the upper bounds
and explains the origin of each term, while
Subsection~\ref{sec:rate-lower-bounds} develops the corresponding lower-bound mechanisms term by term.
Section~\ref{sec:rate-proof-overview} proves the upper bounds, and Section~\ref{sec:rate-related} discusses related work. All remaining proofs are deferred to the appendix.

\section{Preliminaries}
\label{sec:rate-preliminaries}
We first define the stochastic-approximation iterates and the partial-sum path underlying Polyak--Ruppert averaging. Section~\ref{sec:rate-path-model} specifies the distance used to compare path laws, and Section~\ref{sec:rate-quantitative-assumptions} states the assumptions for the quantitative analysis.

\subsection{Polyak--Ruppert averaging and its functional limit}
\label{sec:rate-pr-trajectory}

Let \(\xi\) be a generic observation taking values in a measurable space
\((\mathsf X,\mathcal F)\), and let \(\mathbb P_\xi\) denote its probability law.
Fix an integer \(d\ge1\), and let
\(H\colon \R^d\times\mathsf X\to\R^d\) be measurable with
\(\int_{\mathsf X}\|H(\bx,\xi)\|\,\mathbb P_\xi(\mathrm d\xi)<\infty\) for every
\(\bx\in\R^d\).  SA is a recursive method for finding a
root \(\bx^\star\in\R^d\) of the population equation
\begin{equation}
  g(\bx^\star)=0,
  \qquad
  g(\bx)\coloneqq\int_{\mathsf X}H(\bx,\xi)\,\mathbb P_\xi(\mathrm d\xi).
  \label{eq:population-root-problem}
\end{equation}
For i.i.d.\ observations, \(\mathbb P_\xi\) is their common law. For Markov observations,
it is typically the invariant law of the data process.  This root-finding
formulation includes stochastic optimization, estimating equations, and
fixed-point algorithms
\cite{robbins1951stochastic,benveniste2012adaptive,kushner2003stochastic}.

Observations $(\xi_t)_{t\ge1}$ arrive sequentially, with natural filtration $\cF_t=\sigma(\xi_1,\ldots,\xi_t)$ and trivial $\cF_0$. We write $\Pp$ and $\E$ for probability and expectation on the common probability space supporting the entire sequence. The marginal observation law $\mathbb P_\xi$ in \eqref{eq:population-root-problem} is a separate object.

Starting from a deterministic point $\bx_1$, stochastic approximation updates the current estimate using the next observation and a positive step size $\eta_t$. Polyak--Ruppert averaging then takes the running average of the updated estimates $\bx_2,\ldots,\bx_{t+1}$. Algorithm~\ref{alg:nonlinear-pr-sa} gives both updates. In particular, $\bx_t$ is $\cF_{t-1}$-measurable, and the average requires only a running vector sum.

\begin{algorithm}[H]
\caption{Nonlinear stochastic approximation with Polyak--Ruppert averaging}
\label{alg:nonlinear-pr-sa}
\KwIn{Horizon \(T\), initial point \(\bx_1\), step sizes
  \((\eta_t)_{t\ge1}\), and update map \(H\)}
\KwOut{Iterates \((\bx_t)_{t=1}^{T+1}\) and sequential averages
  \((\overline{\bx}_t)_{t=1}^{T}\)}
Set \(\overline{\bx}_0\leftarrow0\)\;
\For{\(t=1,\ldots,T\)}{
  Observe \(\xi_t\)\;
  \(\bx_{t+1}\leftarrow \bx_t-\eta_tH(\bx_t,\xi_t)\)\;
  \(\overline{\bx}_t\leftarrow\overline{\bx}_{t-1}
    +t^{-1}(\bx_{t+1}-\overline{\bx}_{t-1})\)\;
}
\end{algorithm}
The average after $T$ updates is $\overline{\bx}_T=T^{-1}\sum_{t=1}^T\bx_{t+1}$. To study its fluctuations throughout the run, we now give the full interpolation formula for the partial-sum process introduced in Section~\ref{sec:rate-introduction}. Define the random continuous mapping $\bPhi_T:[0,1]\to\R^d$ by
\begin{equation}
\begin{aligned}
  \bPhi_T(r)
  &\coloneqq\frac1{\sqrt T}\left\{
    \sum_{t=1}^n(\bx_{t+1}-\bx^\star)
    +(Tr-n)(\bx_{n+2}-\bx^\star)\right\},\\
  &\hspace{1em}r\in[n/T,(n+1)/T],\qquad n=0,\ldots,T-1.
\end{aligned}
  \label{eq:partial-sum-process}
\end{equation}
The sum is empty when $n=0$, and the formulas on adjacent intervals agree at their shared endpoint. Thus \eqref{eq:partial-sum-process} specifies the entire path, including $\bPhi_T(0)=0$ and $\bPhi_T(1)=\sqrt T(\overline{\bx}_T-\bx^\star)$. At every grid point $n/T$ with $n\ge1$, it gives $\bPhi_T(n/T)=n(\overline{\bx}_n-\bx^\star)/\sqrt T$, recovering the cumulative sums in the introduction. We use $\bDelta_t=\bx_t-\bx^\star$ for the iterate error.

\begin{remark}[Indexing convention]
The partial-sum path in \eqref{eq:partial-sum-process} uses the updated
iterates $\bx_2,\ldots,\bx_{n+1}$, whereas a seemingly more natural convention
would use $\bx_1,\ldots,\bx_n$.  At the grid point $n/T$, the latter differs
from \(\bPhi_T\) by $(\bx_1-\bx_{n+1})/\sqrt T$.  We adopt the former only
because it makes the main proof ideas more transparent.  This discrepancy is
negligible in our final analysis and is never rate-determining; we record it
solely to clarify the indexing convention.
\end{remark}

Classical functional limit results identify the Brownian limit of the path in \eqref{eq:partial-sum-process}. We recall their conclusion below, with the regularity conditions taken from the cited results.

\begin{theorem}[Qualitative FCLT for the Polyak--Ruppert trajectory]
\label{thm:fclt_informal}
Under the regularity conditions of \cite[Assumption~1]{lee2021fast} for SGD, or \cite[Assumptions~1--4 and Theorem~1]{li2023nonlinear} for nonlinear SA with Markovian data, the corresponding partial-sum path satisfies
\begin{equation}
  \bPhi_T\Rightarrow\bZ_\star
  \quad\text{in }C([0,1],\R^d),
  \label{eq:rate-qualitative-fclt}
\end{equation}
where $\bZ_\star(r)=\bG^{-1}\bS^{1/2}\bW_\star(r)$, $\bG$ is the Jacobian of the mean field at the target, and $\bS$ is the limiting covariance of the leading martingale noise. Here, $\bW_\star$ is standard $d$-dimensional Brownian motion, and the path space carries the uniform norm.
\end{theorem}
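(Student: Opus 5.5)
The plan follows the standard Polyak--Juditsky route, lifted from the endpoint to the whole path. I work under the cited assumptions: $g$ differentiable at $\bx^\star$ with Hurwitz Jacobian $\bG$; step sizes $\eta_t=\eta t^{-\alpha}$ with $\alpha\in(1/2,1)$; iterates converging almost surely; and martingale-difference noise $\bepsilon_{t+1}\coloneqq g(\bx_t)-H(\bx_t,\xi_t)$ with conditional covariance converging to $\bS$ (in the Markov case, after a Poisson-equation correction that leaves a martingale plus negligible telescoping terms).

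\textbf{Step 1: linearize the recursion.} Write
\[
\bDelta_{t+1}=\bDelta_t-\eta_t\bG\bDelta_t+\eta_t\bepsilon_{t+1}-\eta_t\br_t,
\qquad \br_t\coloneqq g(\bx_t)-\bG\bDelta_t .
\]
Solve the recursion for $\bG\bDelta_t$ and sum it. Summation by parts, using the operators $\bA_j^n\coloneqq\sum_{t=j}^n\eta_j\prod_{k=j+1}^{t}(\bI-\eta_k\bG)$, gives for every $n\le T$
\[
\frac{1}{\sqrt T}\sum_{t=1}^n\bDelta_{t+1}
=\frac{1}{\sqrt T}\sum_{j=1}^n \bG^{-1}\bepsilon_{j+1}
+\frac{1}{\sqrt T}\sum_{j=1}^n(\bA_j^n-\bG^{-1})\bepsilon_{j+1}
+R_n .
\]
The remainder $R_n$ collects the initial-condition term and the nonlinear terms $\sum_j\bA_j^n\br_j$.

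\textbf{Step 2: show the correction terms are uniformly negligible.} I need $\max_{n\le T}$ of the last two terms to go to zero in probability.

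For the initial-condition term, $\|\prod_k(\bI-\eta_k\bG)\|$ decays like $\exp(-c\sum_k\eta_k)$, so it contributes $O(T^{-1/2})$ uniformly.

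For the nonlinear term, the operators $\bA_j^n$ are uniformly bounded. Local Lipschitz continuity of $\nabla g$ gives $\|\br_t\|\lesssim\|\bDelta_t\|^2$ near $\bx^\star$. Combined with the a.s.\ convergence and the rate $\sum_t t^{-1/2}\|\bDelta_t\|^2<\infty$ (a standard consequence of $\E\|\bDelta_t\|^2\lesssim\eta_t$ after localization), Kronecker's lemma gives $T^{-1/2}\sum_{t\le T}\|\br_t\|\to0$ a.s.

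For the smoothing martingale, Doob's maximal inequality is not directly applicable, because the coefficients $\bA_j^n-\bG^{-1}$ depend on $n$. I therefore split them. Write $\bA_j^n-\bG^{-1}=(\bA_j^\infty-\bG^{-1})+(\bA_j^n-\bA_j^\infty)$. The first piece is $O(\eta_j^{-1}\eta_j\cdot\text{small})\to0$ by the Polyak--Juditsky lemma, so Doob's inequality applies to it. The second piece equals $-\bG^{-1}\prod_{k=j+1}^n(\bI-\eta_k\bG)$ up to lower order terms. Its sum over $j$ is therefore an exponentially weighted moving average of recent noise. I bound its maximum over $n$ by a union bound over $n$ combined with a $p$-th moment Burkholder inequality. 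This requires $\sup_t\E[\|\bepsilon_{t+1}\|^p\mid\cF_t]<\infty$ for some $p>2$. The result is $O_{\Pp}(T^{1/p}\cdot T^{-(1-\alpha)/2})$, or its Gaussian analogue, which tends to zero. This is the step I expect to be the main obstacle. If the cited assumptions only give $p=2$ plus a Lindeberg condition, I would instead truncate the noise and run the same argument on the truncated part.

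\textbf{Step 3: apply the martingale FCLT.} The leading term $T^{-1/2}\sum_{j\le\lfloor Tr\rfloor}\bG^{-1}\bepsilon_{j+1}$ satisfies the conditions of the martingale functional CLT (e.g.\ Helland's or Hall--Heyde's version). Its predictable quadratic variation is $T^{-1}\sum_{j\le Tr}\bG^{-1}\E[\bepsilon_{j+1}\bepsilon_{j+1}^\top\mid\cF_j]\bG^{-\top}$, which converges to $r\,\bG^{-1}\bS\bG^{-\top}$ in probability. The conditional Lindeberg condition follows from the $p$-moment bound. Hence the leading term converges in $D([0,1],\R^d)$ to $\bG^{-1}\bS^{1/2}\bW_\star$. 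Linear interpolation changes the path by at most $T^{-1/2}\max_t\|\bDelta_{t+1}\|\to0$, so the convergence also holds in $C([0,1],\R^d)$. Slutsky's lemma in the uniform norm then yields \eqref{eq:rate-qualitative-fclt}.
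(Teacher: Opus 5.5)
There is a genuine gap in Step~2, in the term you yourself flag as the main obstacle. Your per-index estimate is correct, but it does not give what you claim. Each $\norm{\by_{n+1}}/\eta_{n+1}$ has $L^p$ norm of order $\eta_n^{-1/2}\asymp n^{\alpha/2}$, so a union bound over all $T$ indices yields $\max_{n\le T}\norm{\by_{n+1}}/(\sqrt T\,\eta_{n+1})=O_{\Pp}(T^{1/p-(1-\alpha)/2})$. This tends to zero only when $p>2/(1-\alpha)$. Since $2/(1-\alpha)>4$ for every admissible $\alpha$, the argument fails whenever $p\le4$; for $p=4$ the bound is $T^{(2\alpha-1)/4}\to\infty$.

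A per-index Fuk--Nagaev or Bernstein bound in place of Burkholder does not rescue this. The polynomial term becomes $T\cdot T^{\alpha}(x\sqrt T)^{-p}$, which still requires $p>2+2\alpha$. Truncating the noise does not help either, because the loss comes from paying for $T$ separate events, not from the tails of $\bepsilon_t$. These events are highly redundant: one noise input of size $u$ makes $\norm{\by_{n+1}}/\eta_{n+1}$ of order $u$ for about $n^\alpha$ consecutive $n$. So under the moment conditions actually available (a fixed $p>2$, or $p=2$ with Lindeberg), this step is unproved. It is the only step that goes beyond the endpoint Polyak--Juditsky argument.

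The missing idea is to exploit the memory length $\eta_n^{-1}\asymp n^\alpha$ of the recursion. The paper only cites this theorem, but its quantitative proof meets exactly your term: $\bpsi_{3,T}$ in \eqref{eq:rate-four-remainder-decomposition} is bounded by $C\max_{n\le T}\norm{\by_{n+1}}/(\sqrt T\eta_{n+1})$. Lemma~\ref{lem:linear-rate-filter} then works as follows:
\begin{itemize}
\item It cuts the horizon into blocks of length $\asymp Ls^\alpha$, on which the rescaled recursion contracts by a factor $1/2$.
\item Inside each block, Abel summation reduces the maximum to that of an ordinary martingale partial sum of $(\eta_j/\eta_{j+1})\bepsilon_j$.
\item The union bound runs over only $\asymp T^{1-\alpha}$ blocks. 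This gives $CT^{1-p/2}x^{-p}+CT^{1-\alpha}\exp(-cT^{1-\alpha}x^2)$, which vanishes for every $p>2$.
\end{itemize}

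For the qualitative statement a lighter version suffices. Write $\by_{n+1}/\eta_{n+1}=\sum_{j\le n}\bm{c}_{n,j}\bepsilon_j$ with $\bm{c}_{n,j}=(\eta_j/\eta_{n+1})\prod_{i=j+1}^n\bB_i$, and note that $\norm{\bm{c}_{n,j}-\bm{c}_{n,j-1}}\lesssim\eta_j e^{-c\sum_{i=j+1}^n\eta_i}$. Abel summation then bounds $\max_{n\le T}\norm{\by_{n+1}}/(\sqrt T\eta_{n+1})$ by $C$ times the modulus of continuity of $\bmM_T$ at scale $\asymp MT^{\alpha-1}$, plus $Ce^{-cM}\tnorm{\bmM_T}$. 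This vanishes in probability (let $T\to\infty$, then $M\to\infty$) by the $C$-tightness you already get from the martingale FCLT in Step~3, and it needs only the Lindeberg condition.

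A smaller correction: your second piece is $-(\eta_j/\eta_n)\bG^{-1}\prod_{k=j+1}^n\bB_k$ up to lower-order terms, not $-\bG^{-1}\prod_{k=j+1}^n\bB_k$. That ratio is what produces $\by_{n+1}/\eta_{n+1}$. The rest of your proposal is sound and matches the paper's decomposition: the initialization bound, the Kronecker argument for the nonlinear term, Doob's inequality for the fixed-coefficient piece $\bA_j^\infty-\bG^{-1}=O(j^{\alpha-1})$, and Step~3.
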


Lee et al.~\cite{lee2021fast} use this path limit to construct an asymptotically pivotal statistic for online inference with SGD, an approach known as random scaling. Li, Liang, and Zhang~\cite{li2023nonlinear} extend the approach to nonlinear SA with Markovian observations, using a Poisson-equation decomposition to account for dependence in the data. Related work considers Local SGD \cite{li2021statistical}, stream SGD with Markov data \cite{li2025stream}, and asynchronous averaged Q-learning \cite{liu2026asynchronous}. Setting $r=1$ in \eqref{eq:rate-qualitative-fclt} recovers the endpoint CLT. Here, we seek to quantify the approximation error for the entire path at a finite horizon $T$.

\subsection{Path-space metrics}
\label{sec:rate-path-model}

We measure the error in the Brownian approximation \eqref{eq:rate-qualitative-fclt} using the Prokhorov distance between path laws. For a fixed scalar projection, the underlying path space is one-dimensional \(D([0,1],\R)\). Let \(\Lambda\) be the set of strictly increasing continuous bijections of \([0,1]\) onto itself.

\begin{definition}[\(J_1\) Skorokhod metric]
\label{def:rate-j1-metric}
For \(f,g\in D([0,1],\R)\), define
\begin{equation}
  d_{\mathrm S}(f,g)
  \coloneqq\inf_{\lambda\in\Lambda}
  \left\{
    \sup_{0\le s<t\le1}
    \left|\log\frac{\lambda(t)-\lambda(s)}{t-s}\right|
    \vee
    \sup_{0\le r\le1}|f(\lambda(r))-g(r)|
  \right\}.
  \label{eq:rate-j1-metric}
\end{equation}
\end{definition}

The \(J_1\) Skorokhod metric is widely used in the analysis of stochastic
processes and induces the Skorokhod \(J_1\) topology
\cite[Section~12]{billingsley2013convergence}.  Compared with the uniform
metric
\(
  \tnorm{f-g}\coloneqq\sup_{0\le r\le1}|f(r)-g(r)|
\),
it accommodates a more general class of random-path convergence by allowing
small shifts in jump times.  We use this specific metric throughout the paper. Quantitative rates need not be preserved when it is replaced by a topologically equivalent metric.
Moreover, choosing the identity time change
(i.e., \(\lambda = \mathrm{id}\))
gives the following very useful relationship
\begin{equation}
  d_{\mathrm S}(f,g)\le\tnorm{f-g}.
  \label{eq:j1-bounded-by-uniform}
\end{equation}

We next formally introduce the Prokhorov distance induced by the \(J_1\)
Skorokhod metric.

\begin{definition}[Prokhorov distance]
\label{def:rate-prokhorov}
Consider two Borel probability measures \(\mu\) and \(\nu\) on
\((D([0,1],\R),d_{\mathrm S})\). For a Borel set \(\Gamma\subset D([0,1],\R)\), let \(\Gamma^\delta\coloneqq\{g:\inf_{f\in\Gamma}d_{\mathrm S}(f,g)<\delta\}\) denote its \(\delta\)-enlargement. The Prokhorov distance is
\begin{equation}
\begin{split}
  d_{\mathrm P}(\mu,\nu)\coloneqq\inf\bigl\{\delta>0:
  &\ \mu(\Gamma)\le\nu(\Gamma^\delta)+\delta
  \ \text{and}\
  \nu(\Gamma)\le\mu(\Gamma^\delta)+\delta\\
  &\ \text{for every Borel set }\Gamma\bigr\}.
\end{split}
  \label{eq:rate-prokhorov-definition}
\end{equation}
For random elements \(X,Y\), we use the shorthand
\(d_{\mathrm P}(X,Y)\coloneqq d_{\mathrm P}(\mathcal L(X),\mathcal L(Y))\).
\end{definition}

For random paths $X_T,X$ in $D([0,1],\R)$, convergence $d_{\mathrm P}(X_T,X)\to0$ implies $X_T\Rightarrow X$ in the $J_1$ topology \cite[Sections~6 and~12]{billingsley2013convergence}.

\subsection{Quantitative model assumptions}
\label{sec:rate-quantitative-assumptions}

We now impose quantitative conditions on the recursion, its noise, and the iterate errors. These conditions specify the constants and exponents in the finite-horizon bounds below.

\begin{assumption}[Remainder bounds and positive stability]
\label{ass:rate-linear-dynamics}
For the given population function \(g\) defined by \eqref{eq:population-root-problem}, there exist a matrix \(\bG\in\R^{d\times d}\), a small constant \(\delta_g > 0\), and a constant
\(C_{\mathrm{sm}}<\infty\) such that, for every \(\norm{\bx - \bx^\star} \le \delta_g\),
\begin{equation}
  \norm{g(\bx)-\bG(\bx-\bx^\star)}
  \le C_{\mathrm{sm}}\norm{\bx-\bx^\star}^{2}.
  \label{eq:rate-trajectory-smoothness}
\end{equation}
In addition, for some constant \(C_{\mathrm{bd}}<\infty\),
\[
  \sup_{\bx\in\R^d}
  \norm{g(\bx)-\bG(\bx-\bx^\star)}\le C_{\mathrm{bd}}.
\]
Moreover, \(-\bG\) is Hurwitz. Equivalently, every eigenvalue of \(\bG\) has
strictly positive real part.
\end{assumption}

The local quadratic condition and positive stability are standard in stochastic approximation and are satisfied,
for example, by strongly convex SGD with a globally Lipschitz Hessian and by
entropy-regularized Q-learning under standard feature-stability conditions.
See \cite{anastasiou2019normal,rubtsov2026gaussian}.

\begin{assumption}[Power-law step-size schedule]
\label{ass:rate-gain}
For constants $\eta>0$ and $\alpha\in(1/2,1)$, the step sizes are $\eta_t=\eta t^{-\alpha}$ for $t\ge1$.
\end{assumption}

The range \(\alpha\in(1/2,1)\) gives the classical condition for Polyak--Ruppert averaging where \(\sum_t\eta_t=\infty\) and \(\sum_t\eta_t^2<\infty\)
\cite{ruppert1988efficient,polyak1992acceleration,kushner2003stochastic}.
The exponent \(\alpha\) affects the final approximation rate, as shown in our main theorem below.

\begin{assumption}[Martingale-difference noise]
\label{ass:rate-martingale-noise}
For every \(t\ge1\), define the centered observation noise by \(\bepsilon_t\coloneqq g(\bx_t)-H(\bx_t,\xi_t)\).
We assume that $(\bepsilon_t,\cF_t)_{t\ge1}$ is a martingale-difference sequence. Thus $\bepsilon_t$ is an increment of the martingale $\sum_{j=1}^t\bepsilon_j$, whereas its contribution to the SA update is $\eta_t\bepsilon_t$.  Moreover, for some \(p>2\) and deterministic \(M_p<\infty\), the
following relations hold almost surely for every \(t\ge1\):
\begin{equation}
  \E\!\left[\bepsilon_t\mid\cF_{t-1}\right]=0,
  \qquad
  \E\!\left[\|\bepsilon_t\|^p\mid\cF_{t-1}\right]\le M_p.
  \label{eq:linear-sa-rate-moments}
\end{equation}
\end{assumption}


This assumption allows the noise distribution to depend on past
observations and to change over time; neither independence nor
stationarity is required. The conditional mean-zero property ensures
that the noise introduces no systematic drift at each update, while
the conditional $p$th-moment bound limits the probability of large
noise increments. Together, these conditions allow us to control
the maximum of the noise partial sums using the martingale inequality
of \cite{fan2017deviation}. Related martingale-difference assumptions
are used in SA and SGD analysis
\cite{kushner2003stochastic,anastasiou2019normal}.

\begin{assumption}[Quantitative stabilization of cumulative conditional covariance]
\label{ass:rate-covariance-stabilization}
There exist a positive-semidefinite matrix \(\bS\in\R^{d\times d}\), a constant
\(C_S<\infty\), and an exponent \(\gamma>0\) such that, for every integer
\(T\ge2\),
\begin{equation}
  \E\!\left[\max_{0\le n\le T}
  \left\|
    \frac1T\sum_{t=1}^n
      \E\!\left[\bepsilon_t\bepsilon_t^\top\mid\cF_{t-1}\right]
    -\frac nT \bS
  \right\|
  \right]
  \le C_S T^{-3\gamma}.
  \label{eq:linear-sa-rate-covariance-stabilization}
\end{equation}
\end{assumption}


The normalized cumulative conditional covariance in
\eqref{eq:linear-sa-rate-covariance-stabilization} is the predictable
quadratic variation of the normalized noise partial sums
\cite{hall2014martingale,whitt2007proofs}.
This assumption requires it to be close to $(n/T)\bS$ at every
intermediate time $n\le T$, with the maximum discrepancy bounded
in expectation by $C_S T^{-3\gamma}$.
The moment bound in Assumption~\ref{ass:rate-martingale-noise}
controls the size of the noise but does not guarantee this convergence.
If the bound holds for some $\gamma>0$, it also holds with the same
constant for any smaller positive exponent.
If $\E[\bepsilon_t\bepsilon_t^\top\mid\cF_{t-1}]=\bS$
almost surely at every step, the discrepancy is zero and we may
take $C_S=0$. The matrix $\bS$ may be singular, allowing zero
limiting variance in some directions.

\begin{assumption}[Trajectory moment bound]
\label{ass:rate-trajectory-stability}
For a given \(q \in [1,p/2]\) (\(p\)
is defined in Assumption~\ref{ass:rate-martingale-noise}),
 there is a constant
\(C_\Delta<\infty\) such that, for every \(t\ge1\),
\begin{equation}
  \E\big[\norm{\bDelta_t}^{2q}\big]\le C_\Delta\eta_t^q.
  \label{eq:rate-trajectory-stability}
\end{equation}
\end{assumption}

For \(q=1\), this assumption reduces to the familiar nonasymptotic
mean-square bound
\(\E[\norm{\bx_t-\bx^\star}^{2}]=O(\eta_t)\).
Higher-moment bounds are also available under suitable noise and
stability conditions. For strongly convex nonlinear SGD with finite
fourth moments, Shao and Zhang
\cite[Lemmas~5.12 and~5.14]{shao2022berry} establish the cases
\(q=1\) and \(q=2\). Under stronger sub-Gaussian assumptions,
Sheshukova et al.~\cite[Lemma~14]{sheshukova2026gaussian}
obtain analogous bounds for arbitrary fixed moment orders.
Related nonasymptotic mean-square bounds under contractivity or
Lyapunov-drift conditions can be found in
\cite{moulines2011non,chen2020finite,chen2021lyapunov}.

We impose this bound explicitly because local positive stability
alone does not guarantee a finite-sample bound of this order.
The parameters \(p\) and \(q\) describe different moment conditions:
\(p\) concerns the noise, whereas \(2q\) concerns the iterate error.
When trajectory moments are derived using only \(p\)th-moment
control of the noise, the available range is typically \(2q\le p\),
as imposed here. Higher orders \(q>p/2\) require additional
integrability and a corresponding trajectory-moment bound.
Combining Assumptions~\ref{ass:rate-linear-dynamics}
and~\ref{ass:rate-trajectory-stability} yields $\E\big[\norm{g(\bx_t)-\bG\bDelta_t}^{q}\big] \le C\eta_t^q,$ which is the nonlinear-remainder estimate used below.

Let \(\bW_\star\) be a standard \(d\)-dimensional Brownian motion,
and let \(\bS^{1/2}\) denote the symmetric positive-semidefinite
square root of \(\bS\). Define the Brownian target by
\begin{equation}
  \bZ_\star(r)\coloneqq\bG^{-1}\bS^{1/2}\bW_\star(r),
  \qquad 0\le r\le1.
  \label{eq:linear-sa-formal-brownian-target}
\end{equation}
Its covariance at time \(r\) is \(r\bG^{-1}\bS\bG^{-\top}\).
We use \(W_\star^{(1)}\) to denote a standard one-dimensional
Brownian motion when specifying a scalar target.
\section{Main Results}
\label{sec:rate-model-results}

This section presents the quantitative results in two parts.
Subsection~\ref{sec:rate-upper-bounds} gives the functional central limit
theorem upper bound for martingale-difference noise and discusses its
i.i.d. specialization.
Subsection~\ref{sec:rate-lower-bounds} gives four benchmark lower bounds that
isolate the obstructions produced by conditional-covariance stabilization,
finite moments, algorithmic smoothing, and the nonlinear remainder.

\subsection{Upper Bounds}
\label{sec:rate-upper-bounds}

We first state the upper bound under the path-space formulation and
assumptions of Section~\ref{sec:rate-quantitative-assumptions}, using the metric in Section~\ref{sec:rate-path-model}.
\begin{theorem}[Quantitative functional CLT for fixed projections of nonlinear SA]
\label{thm:linear-fclt-rate}
Suppose Assumptions~\ref{ass:rate-linear-dynamics},
\ref{ass:rate-gain}, \ref{ass:rate-martingale-noise},
\ref{ass:rate-covariance-stabilization}, and
\ref{ass:rate-trajectory-stability} hold.
Recall the four rate exponents
\[
  \rateCov=\gamma,\qquad
  \rateOsc=\frac{p-2}{2(p+1)},\qquad
  \rateSm=\frac{1-\alpha}{2},\qquad
  \rateNL=\frac{(\alpha-1/2)q}{q+1}.
\]
For every fixed \(\btheta\in\R^d\), there is a finite constant \(C\), which
is independent of \(T\), such that for every integer \(T\ge2\),
\begin{equation}
\begin{aligned}
  d_{\mathrm P}\!\left(
    \btheta^\top\bPhi_T,
    \btheta^\top\bZ_\star
  \right)
  \le C\Bigl\{&
    \underbrace{T^{-\rateCov}\sqrt{\log T}}_{\text{covariance stabilization}}
    +\underbrace{T^{-\rateOsc}\sqrt{\log T}}_{\text{finite-moment noise}}\\
    &+\underbrace{T^{-\rateSm}\sqrt{\log T}}_{\text{algorithmic smoothing}}
    +\underbrace{T^{-\rateNL}}_{\text{nonlinear remainder}}
  \Bigr\}.
\end{aligned}
  \label{eq:linear-sa-fclt-rate}
\end{equation}
\end{theorem}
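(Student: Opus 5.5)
The plan is to decompose the projected path \(\btheta^\top\bPhi_T\) into a leading martingale path plus remainders. The martingale part will be coupled to Brownian motion through a quantitative martingale invariance principle. The remainders will be controlled in sup-norm and transferred to the Prokhorov distance through \eqref{eq:j1-bounded-by-uniform}. The basic transfer device is elementary: if \(X\) and \(Y\) live on a common space and \(\Pp(\tnorm{X-Y}>\delta)\le\delta\), then \(d_{\mathrm P}(X,Y)\le\delta\). Together with the triangle inequality for \(d_{\mathrm P}\), this reduces the theorem to a sequence of pathwise estimates.

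\textbf{Step 1: Decomposition.} Rewrite the recursion as
\[
\bDelta_{t+1}=\bDelta_t-\eta_t\bG\bDelta_t-\eta_t\br_t+\eta_t\bepsilon_t,
\qquad \br_t\coloneqq g(\bx_t)-\bG\bDelta_t.
\]
Solving for \(\bG\bDelta_t\) gives
\[
\bG\bDelta_t=(\bDelta_t-\bDelta_{t+1})/\eta_t-\br_t+\bepsilon_t.
\]
Summing this identity (with the index shifted to match the convention for \(\bx_{t+1}\)) and multiplying by \(\bG^{-1}\) yields, at grid points \(n/T\),
\[
\bPhi_T(n/T)=\frac{1}{\sqrt T}\bG^{-1}\sum_{t\le n}\bepsilon_t+\mathcal R^{\mathrm{sm}}_n+\mathcal R^{\mathrm{nl}}_n+\text{(index shift)}.
\]
Here \(\mathcal R^{\mathrm{sm}}_n\) is the Abel-summation term
\[
T^{-1/2}\bG^{-1}\sum_{t\le n}(\bDelta_t-\bDelta_{t+1})/\eta_t
=T^{-1/2}\bG^{-1}\Bigl[\bDelta_1/\eta_1-\bDelta_{n+1}/\eta_n+\sum_t\bDelta_t(1/\eta_t-1/\eta_{t-1})\Bigr],
\]
and \(\mathcal R^{\mathrm{nl}}_n=-T^{-1/2}\bG^{-1}\sum_{t\le n}\br_t\). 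The linear interpolation between grid points costs at most
\[
T^{-1/2}\max_t\|\bDelta_t\|,
\]
which is negligible under Assumption~\ref{ass:rate-trajectory-stability} after a union bound.

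\textbf{Step 2: Remainders.} The nonlinear remainder is handled by Markov's inequality. Assumption~\ref{ass:rate-linear-dynamics} and Assumption~\ref{ass:rate-trajectory-stability} give \(\|\br_t\|_{L^q}\lesssim\eta_t\). Hence
\[
\E\max_n\|\mathcal R^{\mathrm{nl}}_n\|\le T^{-1/2}\sum_t\|\br_t\|_{L^1}\lesssim T^{1/2-\alpha},
\]
but using the \(q\)th moment is sharper:
\[
\Pp\bigl(\max_n\|\mathcal R^{\mathrm{nl}}_n\|>\delta\bigr)\le\delta^{-q}\,\E\Bigl[\Bigl(T^{-1/2}\sum_t\|\br_t\|\Bigr)^q\Bigr]\lesssim\delta^{-q}T^{-(\alpha-1/2)q}.
\]
Choosing \(\delta\) so that \(\delta^{-q}T^{-(\alpha-1/2)q}=\delta\) gives \(\delta=T^{-(\alpha-1/2)q/(q+1)}=T^{-\rateNL}\), which is exactly the nonlinear term.

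The naive sup bound on \(\mathcal R^{\mathrm{sm}}\) is only of order \(T^{\alpha/2-1/2}\) times logarithms. This suggests \(\mathcal R^{\mathrm{sm}}\) should \emph{not} be treated as a plain remainder. The rate \(T^{-(1-\alpha)/2}\) instead matches the heuristic that each noise input is smeared over a window of length \(\eta_t^{-1}\asymp t^{\alpha}\le T^\alpha\). On the \(1/\sqrt T\) scale, the path is therefore smoothed over time windows of width \(h=T^{\alpha-1}\), and Brownian oscillation over such a window is \(\sqrt{h\log(1/h)}\asymp T^{-\rateSm}\sqrt{\log T}\).

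I would therefore handle \(\mathcal R^{\mathrm{sm}}\) differently: write \(\bDelta_{t}\) explicitly through the linear products \(\prod(\bI-\eta_j\bG)\) applied to the noise. The combined path
\[
\frac{1}{\sqrt T}\sum_{t\le n}\bG^{-1}\bepsilon_t+\mathcal R^{\mathrm{sm}}_n
\]
then becomes \(T^{-1/2}\sum_{t}\bA^n_t\bepsilon_t\), where the weights \(\bA^n_t\) equal \(\bG^{-1}\) for \(t\ll n\) and decay across a boundary layer of width \(t^{\alpha}\) near \(n\). Projecting onto \(\btheta\) and comparing to the unsmoothed martingale, the difference at time \(n\) is a weighted sum of the \(\bepsilon_t\) with \(t\in[n-Cn^\alpha\log T,n]\), plus an exponentially small tail. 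A maximal Fuk–Nagaev-type inequality for martingales (\cite{fan2017deviation}), applied blockwise with a union bound over \(T\) windows, should give sup-norm size \(T^{-(1-\alpha)/2}\sqrt{\log T}\) with the \(p\)th-moment tail contributing at most the oscillation term.

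\textbf{Step 3: Martingale-to-Brownian coupling.} For \(M_n=T^{-1/2}\btheta^\top\bG^{-1}\sum_{t\le n}\bepsilon_t\), I would invoke a Skorokhod-embedding/strong-approximation bound for martingales with a random predictable quadratic variation. The covariance discrepancy from Assumption~\ref{ass:rate-covariance-stabilization}, taken at its \(T^{-3\gamma}\) rate, translates through the Brownian modulus into a \(T^{-\gamma}\sqrt{\log T}\) sup-norm error after Markov's inequality and balancing; this is where the exponent \(3\gamma\) should enter, via an \(h^{1/2}\)-type modulus under a cube-root balance. Truncating increments at level \(T^{1/2}\delta\) and using the \(p\)th moments gives the \(T^{-(p-2)/(2(p+1))}\sqrt{\log T}\) term, with the exponent coming from balancing the truncation probability \(T\cdot(T^{1/2}\delta)^{-p}\) against the embedding error.

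The main obstacle is Step 2's smoothing analysis: showing that the boundary-layer correction is uniformly small in \(n\), rather than merely pointwise, while keeping only a \(\sqrt{\log T}\) loss. This requires sharp control of the products \(\prod(\bI-\eta_j\bG)\) for a non-symmetric Hurwitz \(\bG\), which I would obtain using a weighted norm in which \(\bI-\eta\bG\) is a contraction.
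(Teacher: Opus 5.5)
Your treatment of the nonlinear remainder agrees with the paper: an $L^q$ bound of order $T^{-(\alpha-1/2)}$, then Markov, giving $T^{-\rateNL}$. So does your covariance step: Skorokhod embedding, Markov on the $T^{-3\gamma}$ discrepancy at threshold $T^{-2\gamma}$, and a Brownian modulus over windows of that width.

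The gap is the step you yourself flag. You must control the smoothing correction $T^{-1/2}\sum_{t\le n}(\bA_t^n-\bG^{-1})\bepsilon_t$ \emph{uniformly in $n$}, with only $p$th moments, at the exponent $\rateOsc$. The order is not the difficulty. Your ``naive'' $T^{\alpha/2-1/2}$ is exactly $T^{-\rateSm}$, and your Polyak--Juditsky boundary term $T^{-1/2}\bDelta_{n+1}/\eta_n$ is, up to nonlinear and initial pieces, precisely the quantity the paper has to control.

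The difficulty is the maximum over $n$. Because the weights move with $n$, the correction is not a martingale in $n$, so no maximal martingale inequality applies to it directly. Consider the route you describe: a pointwise Fuk--Nagaev bound for each $n$, followed by a union bound over $T$ windows. For each $n$ the variance budget is of order $T^{\alpha-1}$, which is harmless since the union costs only $\sqrt{\log T}$. But the $p$th-moment budget is of order $T^{\alpha-p/2}$. The union therefore yields the polynomial tail $T^{1+\alpha-p/2}x^{-p}$ and a probability radius $T^{-(p/2-1-\alpha)/(p+1)}$. This is worse than $T^{-\rateOsc}$ by a factor $T^{\alpha/(p+1)}$, and it does not decay at all when $p\le 2+2\alpha$. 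Passing to $O(T^{1-\alpha})$ blocks helps only if you have a within-block maximal inequality for sums whose coefficients change with $n$, and the proposal does not provide one.

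The paper supplies this through a two-part reduction.

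First, split $\bA_t^n-\bG^{-1}=(\bA_t^T-\bG^{-1})+(\bA_t^n-\bA_t^T)$. The fixed-endpoint part has $n$-free coefficients, so it is a genuine martingale in $n$. Lemma~\ref{lem:linear-rate-coefficients} gives $\sum_t\|\bA_t^T-\bG^{-1}\|^s\lesssim T^\alpha$, so one maximal Fuk--Nagaev bound suffices. This part also shows that your localization claim is inaccurate. For every $t\le n$, $\bA_t^n$ differs from $\bG^{-1}$ by a term of order $t^{\alpha-1}$, caused by the variation of the step sizes, rather than by an exponentially small tail. That term is small but not local.

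Second, the varying-endpoint part factors exactly as $-T^{-1/2}(\sum_{k>n}\prod_{i=n+1}^k\bB_i)\by_{n+1}$. The deterministic factor has norm $\lesssim\eta_{n+1}^{-1}$, so everything reduces to $\max_n\|\by_{n+1}\|/(\sqrt T\eta_{n+1})$ for the linear filter $\by_{t+1}=\bB_t\by_t+\eta_t\bepsilon_t$. Lemma~\ref{lem:linear-rate-filter} controls this with adaptive blocks of length $\lceil Ls^\alpha\rceil$:
\begin{itemize}
\item across each complete block, the normalized filter $\by_t/\eta_t$ contracts by a factor $1/2$;
\item within a block, Abel summation bounds it by a multiple of its block-start value plus a multiple of the maximum of the genuine martingale $\sum_{j=s_\ell}^k(\eta_j/\eta_{j+1})\bepsilon_j$.
\end{itemize}
Only $O(T^{1-\alpha})$ blocks enter the union bound, and their $p$th-moment budgets sum to $O(T)$. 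This gives the tail $T^{1-p/2}x^{-p}+T^{1-\alpha}e^{-cT^{1-\alpha}x^2}$, hence exactly $T^{-\rateOsc}+T^{-\rateSm}\sqrt{\log T}$.

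A smaller point concerns your Step~3. Once $p>4$, Skorokhod embedding has an intrinsic $T^{-1/4}$ floor coming from the centred embedding times. It therefore cannot deliver $T^{-\rateOsc}\sqrt{\log T}$ when $\rateOsc>1/4$. The paper caps the exponent below $1/4$ and absorbs the floor into the smoothing term, using $\rateSm<1/4$.
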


\noindent\textit{\textbf{The sources of the four rates.}}
The four rates in \eqref{eq:linear-sa-fclt-rate} arise from three structural components: the leading noise partial sum, the algorithmic smoothing induced by the SA recursion, and the nonlinear remainder. The proof separates these components, bounds their contributions to the path error, and then combines the bounds. At a grid point $n/T$, the decomposition has the schematic form
\begin{align*}
  \bPhi_T(n/T)
  ={}&\underbrace{\frac1{\sqrt T}\sum_{j=1}^n\bG^{-1}\bepsilon_j}_{\text{leading noise partial sum}}
  +\underbrace{\frac1{\sqrt T}\sum_{j=1}^n
    (\mathsf{Poly}(\bG)-\bG^{-1})\bepsilon_j}_{\text{algorithmic smoothing}}\\
  &-\underbrace{\frac1{\sqrt T}\sum_{j=1}^n
    \mathsf{Poly}(\bG)(g(\bx_j)-\bG\bDelta_j)}_{\text{nonlinear remainder}}
  +\underbrace{\vphantom{\frac1{\sqrt T}\sum_{j=1}^n
    \mathsf{Poly}(\bG)(g(\bx_j)-\bG\bDelta_j)}\text{initialization term}}_{O(T^{-1/2})}.
\end{align*}
The notation $\mathsf{Poly}(\bG)$ records the matrix coefficients generated by iterating the SA recursion. Each such coefficient is a finite weighted sum of products of the update matrices $\bI-\eta_i\bG$, and is therefore a matrix polynomial in $\bG$. We suppress the dependence of its coefficients on $j$, $n$, and, when relevant, $T$. Different occurrences, including different summands within a single sum, may represent different polynomials. The notation serves only to display the sources of error here. In particular, the smoothing term compares these recursion coefficients with the coefficient $\bG^{-1}$ in the leading noise partial sum.

Write $\bmM_T(n/T)=T^{-1/2}\sum_{j=1}^n\bepsilon_j$ and interpolate affinely. Then $\bmM_T$ is the noise partial-sum process, $\bG^{-1}\bmM_T$ is the leading term of the SA path, and $\btheta^\top\bG^{-1}\bmM_T$ is the scalar process compared with Brownian motion. This leading term contributes two rates. Its cumulative conditional covariance need not equal the linear profile $r\bG^{-1}\bS\bG^{-\top}$. The stabilization error in Assumption~\ref{ass:rate-covariance-stabilization} contributes $T^{-\rateCov}\sqrt{\log T}$. Even when that profile is exact, the conditional $p$th-moment bound in Assumption~\ref{ass:rate-martingale-noise} permits rare large noise increments. These produce the separate finite-moment contribution $T^{-\rateOsc}\sqrt{\log T}$.

The leading partial sum adds $\bG^{-1}\bepsilon_j$ once at time $j/T$. In the SA path, $\bepsilon_j$ first affects $\bx_{j+1}$ and then propagates through subsequent updates, spreading its contribution over a sequence of partial sums. We call this gradual accumulation \emph{algorithmic smoothing}. It can make the SA partial-sum path locally smoother than its Brownian target, as the Gaussian model in Subsection~\ref{sec:rate-lower-smoothing} demonstrates. Controlling the difference from the leading noise process produces $T^{-\rateSm}\sqrt{\log T}$. The nonlinear term instead contains the error $g(\bx_j)-\bG\bDelta_j$ from replacing the mean field by its linearization. Step~2 of the proof uses an $L^q$ bound of order $\eta_j$ for this error. After summation and normalization by $\sqrt T$, the resulting $L^q$ bound has order $T^{-(\alpha-1/2)}$. Its conversion to a probability radius gives $T^{-\rateNL}$.

\noindent\textit{\textbf{Dimension and scope.}}
Theorem~\ref{thm:linear-fclt-rate} is a fixed-dimensional result for each fixed projection $\btheta$. Its constant may depend on $d$, $\btheta$, the matrix $\bG$, and the moment and stability constants in the assumptions. The proof does not track these dependencies sharply. In particular, the displayed powers of $T$ do not constitute a dimension-free guarantee or a bound uniform over all projection directions. The scalar lower bounds below establish obstructions in $T$, without establishing optimal dependence on $d$.

\noindent\textit{\textbf{The step-size tradeoff.}}
Among the four contributions in the bound \eqref{eq:linear-sa-fclt-rate}, the step-size exponent $\alpha$ trades off the nonlinear remainder $T^{-\rateNL}$ against algorithmic smoothing $T^{-\rateSm}\sqrt{\log T}$. Faster step-size decay reduces the iterate errors but spreads the response to each noise input over more updates. For a fixed admissible $q$, increasing $\alpha$ in $\eta_t=\eta t^{-\alpha}$ therefore increases $\rateNL=(\alpha-1/2)q/(q+1)$ but decreases $\rateSm=(1-\alpha)/2$. Ignoring logarithmic factors, the slower of these two terms is determined by $\min\{\rateNL,\rateSm\}$. Balancing the exponents gives $\alpha_q^\star=(2q+1)/(3q+1)$ and the common exponent $q/\{2(3q+1)\}$, illustrated in Figure~\ref{fig:rate-tradeoffs} (left). This choice optimizes the balance between the two SA contributions. If a noise contribution decays more slowly, it determines the overall bound, and other step-size exponents may attain the same overall rate.

\noindent\textit{\textbf{The i.i.d. specialization.}}
When the noise increments $\bepsilon_t$ are i.i.d. and independent of $\cF_{t-1}$, their conditional covariance is constant, so the covariance-stabilization term vanishes. The remaining rate still depends on two kinds of moment control. Recall that $p>2$ is the moment order of the noise in Assumption~\ref{ass:rate-martingale-noise}, whereas $2q$ is the moment order of the iterate error $\bDelta_t$ in Assumption~\ref{ass:rate-trajectory-stability}. The former limits the frequency and size of large noise inputs. The latter controls how strongly the iterates concentrate around the target and thus enters the nonlinear remainder estimate. With these two moment orders specified, optimizing the upper bound over $\alpha$ gives, up to logarithmic factors, the exponent
\begin{equation}
  \min\left\{\frac{p-2}{2(p+1)},\frac{q}{2(3q+1)}\right\}.
  \label{eq:rate-iid-optimized-general-q}
\end{equation}
The first term is the finite-moment noise exponent, and the second is the best balance between nonlinearity and algorithmic smoothing. The smaller one determines the overall rate.

A natural special case uses the same moment order for the noise and the iterate error, so that $2q=p$, or $q=p/2$. For example, fourth-moment control of both corresponds to $p=4$ and $q=2$. This matching requires a moment-propagation argument under suitable stability conditions. It does not follow from independence alone. When that argument supplies Assumption~\ref{ass:rate-trajectory-stability} with $q=p/2$, the balancing step-size exponent is $\alpha=2(p+1)/(3p+2)$, and \eqref{eq:rate-iid-optimized-general-q} becomes
\begin{equation}
  \min\left\{\frac{p-2}{2(p+1)},\frac{p}{2(3p+2)}\right\}.
  \label{eq:rate-iid-optimized-moment-matched}
\end{equation}
For such moment bounds, \cite[Lemma~5.14]{shao2022berry} establishes the fourth-moment case $q=2$, while \cite[Lemma~14]{sheshukova2026gaussian} provides arbitrary fixed trajectory-moment orders under stronger sub-Gaussian conditions. Choosing $q=p$ would instead require a $2p$th trajectory moment, which is not supplied by a $p$th noise-moment bound alone. In the moment-matched case of \eqref{eq:rate-iid-optimized-moment-matched}, the exponent tends to $1/6$ as $p\to\infty$. The two branches meet at $p_c=(5+\sqrt{57})/4\approx3.137$, as shown in Figure~\ref{fig:rate-tradeoffs} (right).

\begin{figure}[t]
  \centering
  \includegraphics[width=\textwidth]{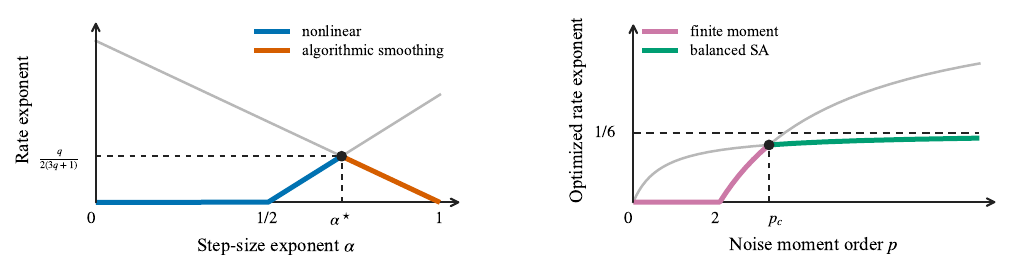}
  \caption{Step-size and moment tradeoffs at the level of polynomial
  exponents.  Thick colored segments form the rate-limiting lower envelope.
  Thin gray segments are inactive.  The left panel illustrates the two
  \(\alpha\)-dependent exponents at \(q=2\).  The right panel shows the
  optimized i.i.d. exponent under the moment-matched choice \(q=p/2\).
  The colored horizontal extensions mark the admissible-range thresholds
  \(\alpha=1/2\) and \(p=2\).}
  \label{fig:rate-tradeoffs}
\end{figure}

\ifdefined\isarxivpreprint
\noindent\textit{\textbf{Comparison with the preliminary bound.}}
A preliminary upper bound appeared in our earlier
work~\cite[Theorem~5]{li2023nonlinearv2}. For ease of comparison, we specialize both results to the i.i.d.\ setting with \(p\ge 4\). The terms that determine the rates then take the following schematic forms:
\(
  T^{-a_{\mathrm{osc}}}
  +T^{-a_{\mathrm{sm}}}
  +T^{-a_{\mathrm{nl}}}
\).
The table below compares the corresponding exponents.
\begin{center}
\renewcommand{\arraystretch}{1.55}
\setlength{\tabcolsep}{1.2em}
\begin{tabular}{@{}lccc@{}}
  \toprule
  Result & \(a_{\mathrm{osc}}\) & \(a_{\mathrm{sm}}\) & \(a_{\mathrm{nl}}\) \\
  \midrule
  \cite[Theorem~5]{li2023nonlinearv2}
    & \(\dfrac{p-2}{2(p+1)}(1-\alpha)\)
    & \(\dfrac{1-\alpha}{3}\)
    & \(\dfrac{\alpha-1/2}{\alpha+1}\) \\
  \midrule
  Theorem~\ref{thm:linear-fclt-rate}
    & \(\dfrac{p-2}{2(p+1)}\)
    & \(\dfrac{1-\alpha}{2}\)
    & \(\dfrac{(\alpha-1/2)q}{q+1}\) \\
  \bottomrule
\end{tabular}
\end{center}
Under the moment-matched choice \(q=p/2\) discussed above, every exponent in
this paper is strictly larger than the previous version.  Therefore, Theorem~\ref{thm:linear-fclt-rate} yields faster and sharper polynomial decay for all three contributions: finite-moment noise, algorithmic smoothing, and nonlinearity.
\fi

\subsection{Mechanism-specific Lower Bounds}
\label{sec:rate-lower-bounds}

In this section, we will give the matching lower bound for Theorem~\ref{thm:linear-fclt-rate}.
In order to make the analysis of this lower bound more rigorous,
before entering the specific analysis of every case,
we first formalize the stochastic oracles used in the lower-bound
constructions.
\begin{definition}[Stochastic oracle]
A stochastic oracle \(\mathcal M\) is a tuple
\[
  \mathcal M
  \coloneqq \left(
    g_{\mathcal M},
    (\epsilon_t^{\mathcal M})_{t\ge1},
    (\mathcal F_t^{\mathcal M})_{t\ge0},
    \mathbb P_{\mathcal M}
  \right),
\]
where \(g_{\mathcal M}:\R\to\R\) is the mean field in
\eqref{eq:population-root-problem}, and
\((\epsilon_t^{\mathcal M})_{t\ge1}\) is a martingale-difference sequence
adapted to \((\mathcal F_t^{\mathcal M})_{t\ge0}\) under the probability
\(\mathbb P_{\mathcal M}\).
\end{definition}
Throughout this subsection, all lower-bound dynamics use the common
initialization \(x_1^{\mathcal M}=x^\star=0\).  Given the step sizes, the
stochastic approximation sequence driven by \(\mathcal M\) is denoted by
\(x_t^{\mathcal M}\).
We write \(\E_{\mathcal M}\) for expectation under
\(\mathbb P_{\mathcal M}\) and \(\Phi_T^{\mathcal M}\) for the resulting
partial-sum path in \eqref{eq:partial-sum-process}.  Consistently with the
notation above, let \(\bmM_T^{\mathcal M}\) be the affine interpolation of
\[
  \bmM_T^{\mathcal M}(n/T)
  \coloneqq
  \frac1{\sqrt T}\sum_{j=1}^n\epsilon_j^{\mathcal M},
  \qquad 0\le n\le T.
\]
When the relevant
assumptions hold, let \(G_{\mathcal M}\) and \(S_{\mathcal M}\) denote the
linearization coefficient and limiting variance in Assumption
\ref{ass:rate-linear-dynamics} and \ref{ass:rate-covariance-stabilization},
respectively, and define the
prescribed Brownian target by
\(
  Z^{\mathcal M}
  \coloneqq
  G_{\mathcal M}^{-1}S_{\mathcal M}^{1/2}W_\star^{(1)}
\).

Now, let \(\mathfrak C\) be a nonempty stochastic oracle class.  For the
prescribed stochastic approximation algorithm, define the worst-case Brownian
approximation error over \(\mathfrak C\) by
\begin{equation}
  \mathcal D_T(\mathfrak C)
  \coloneqq
  \sup_{\mathcal M\in\mathfrak C}
  d_{\mathrm P}\!\left(\Phi_T^{\mathcal M},Z^{\mathcal M}\right).
  \label{eq:rate-worst-case-path-error}
\end{equation}

The four terms in \eqref{eq:linear-sa-fclt-rate} arise from different sources of the Brownian approximation error $d_{\mathrm P}\!(
    \btheta^\top\bPhi_T,
    \btheta^\top\bZ_\star)$.
    In the rest of this section, we assess the sharpness of each term separately, using a different class of stochastic oracles \(\mathfrak C\). Within each class, we construct a lower bound for the corresponding error. Therefore, the four effects need not arise simultaneously in a single model.

To combine the four lower bounds, we consider the union of the corresponding oracle classes. The worst-case Brownian approximation error over this union of models is no smaller than that over any individual class. That is, denoting the four classes by \(\mathfrak C_1,\ldots,\mathfrak C_4\), we have
\[
  \mathcal D_T\!\left(\bigcup_{j=1}^4\mathfrak C_j\right)
  = \max_{1\le j\le4}\mathcal D_T(\mathfrak C_j)
  \ge \frac14\sum_{j=1}^4 \mathcal D_T(\mathfrak C_j).
\]
We next give the four constructions and their conditions. Subsection~\ref{sec:rate-overall-lower} then combines these model classes and states the overall lower bound.

\subsubsection{Error due to conditional-covariance stabilization}
\label{sec:rate-lower-covariance}

For the covariance contribution, we use martingale-difference noises with uniformly bounded conditional moments of every finite order. In this class, an expected discrepancy of order \(T^{-3\gamma}\) in cumulative conditional covariance can still cause a Prokhorov error of order \(T^{-\gamma}\). We first prove this lower bound for the noise partial sums and then relate it to the full SA path.

\begin{theorem}[Sharpness of the conditional-covariance-stabilization exponent]
\label{thm:rate-covariance-stabilization-lower}
Suppose Assumption~\ref{ass:rate-gain} holds.  Fix \(p>2\),
\(q\in[1,p/2]\), and suppose
\(\rateCov<\min\{\rateOsc,\rateSm\}\).  One can construct an oracle class
\(\mathfrak C_{\mathrm{cov}}\) whose dynamics satisfy
Assumptions~\ref{ass:rate-linear-dynamics}--
\ref{ass:rate-trajectory-stability} uniformly over the class, with limiting
variance one and covariance-stabilization exponent \(\rateCov\).  Moreover,
there exist constants \(c>0\) and \(T_0<\infty\) such that, for every integer
\(T\ge T_0\),
\begin{equation}
  \mathcal D_T(\mathfrak C_{\mathrm{cov}})
  \ge cT^{-\rateCov}.
  \label{eq:rate-covariance-stabilization-lower}
\end{equation}
\end{theorem}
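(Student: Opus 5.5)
We will build the class \(\mathfrak C_{\mathrm{cov}}\) from linear scalar oracles \(g_{\mathcal M}(x)=x\), so \(G_{\mathcal M}=1\) and the nonlinear remainder vanishes. The noise is \(\epsilon_t=\sigma_t\zeta_t\), where \(\zeta_t\) are i.i.d.\ Rademacher (or standard Gaussian) variables. The scale \(\sigma_t\) is chosen so that the conditional variance deviates from \(S=1\) in a detectable way. The natural choice is a random but \(\cF_0\)-measurable variance level. One oracle \(\mathcal M_T\) is used per horizon \(T\), and the class is \(\{\mathcal M_T:T\ge2\}\), so that \(\mathcal D_T\) picks out the bad oracle at horizon \(T\).

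Concretely, let \(B\in\{0,1\}\) with \(\Pp(B=1)=T^{-\gamma}\), drawn at time \(0\). Define \(\sigma_t^2=1+B\kappa\) for \(t\le T\), with a fixed \(\kappa>0\), and \(\sigma_t^2=1\) for \(t>T\). Then the expected maximal covariance discrepancy is
\[
\E\bigl[\max_n |n\kappa B/T|\bigr]=\kappa T^{-\gamma}.
\]
This is only of order \(T^{-\gamma}\), not the required \(T^{-3\gamma}\). So we tune instead to \(\Pp(B=1)=T^{-2\gamma}\) with a discrepancy of size \(\kappa_T=T^{-\gamma}\), which gives \(\E[\cdot]\le T^{-3\gamma}\).

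The discrepancy must hold for every horizon, not just one. We therefore make the oracle horizon-specific, with \(B=B_{T}\), and check the other horizons \(T'\ne T\) by hand. For \(T'\le T\) the discrepancy is \(T^{-3\gamma}\cdot T'/T'\le T'^{-3\gamma}\). For \(T'>T\) it is at most \(T^{-3\gamma}\,T/T'\le T'^{-3\gamma}\), since \(3\gamma<1\) in the relevant regime. If \(3\gamma\ge1\) we instead place the perturbation on a shorter initial block.

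Assumptions~\ref{ass:rate-linear-dynamics}--\ref{ass:rate-martingale-noise} are immediate, with all moments bounded uniformly. Assumption~\ref{ass:rate-trajectory-stability} holds with constants uniform over the class, by the standard linear-recursion moment bound, since the \(\sigma_t\) are bounded.

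For the lower bound, a mixture alone is too weak. Conditional on \(B=1\) the path is Brownian with variance \(1+T^{-\gamma}\), but this event has probability \(T^{-2\gamma}\), so the mixture is \(O(T^{-3\gamma})\)-close to Brownian motion. A better construction makes the variance shift deterministic but oscillating, so that the expectation is small only because it is averaged in the max norm, which does not happen.

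We therefore adopt the following alternative. Let \(\sigma_t^2=1+T^{-3\gamma}\) deterministically, and accept that this may yield only \(T^{-3\gamma}\). To obtain \(T^{-\gamma}\) the randomness has to be exploited: take \(\Pp(B=1)=T^{-\gamma}\) and a variance change \(\kappa=T^{-2\gamma}\). On \(B=1\) we take \(\sigma^2=1+\kappa\), and on \(B=0\) we take \(\sigma^2=1-\kappa'\) with \(\kappa'\) small, chosen to keep the mean variance equal to one. Even so, the one-dimensional marginal at \(r=1\) of a Gaussian scale mixture differs from \(N(0,1)\) only at second order, and second order is not \(T^{-\gamma}\).

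Given these failures, the main obstacle is the construction itself: designing \(\sigma_t\) so that \(\E[\text{discrepancy}]\asymp T^{-3\gamma}\) while the Prokhorov error is \(\asymp T^{-\gamma}\). The plan is to exploit the Prokhorov enlargement, which is set-based. Take \(\Pp(B=1)=T^{-\gamma}\) and, on \(B=1\), let the variance be zero on a block of length \(T^{1-2\gamma}\), so the path is flat on an interval of length \(T^{-2\gamma}\). The expected discrepancy is then \(T^{-\gamma}\cdot T^{-2\gamma}=T^{-3\gamma}\). To compensate, the variance is raised to keep \(S=1\) on average. A flat segment of length \(\ell=T^{-2\gamma}\) has Brownian probability of oscillation \(\le\delta\) over some window that is tiny when \(\delta\ll\sqrt{\ell}=T^{-\gamma}\). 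Hence the set \(\Gamma\) of paths that are nearly constant on some window of length \(\ell\) satisfies \(\Pp(\Phi_T\in\Gamma)\ge T^{-\gamma}\), while \(\Pp(Z\in\Gamma^\delta)\) is exponentially small for \(\delta=cT^{-\gamma}\). This gives \(d_{\mathrm P}\ge cT^{-\gamma}\).

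Two points remain. First, the \(J_1\) time changes must not destroy this: we take \(\Gamma\) to be time-change robust by choosing windows of length \(\ell/2\), and use that \(\lambda\) has log-slope at most \(\delta\). Second, the SA smoothing must not be a problem. The passage from \(\bmM_T\) to \(\Phi_T\) costs \(O(T^{-\rateSm}\sqrt{\log T}+T^{-\rateOsc}\sqrt{\log T})\) by the upper-bound decomposition, and this is \(o(T^{-\gamma})\) under the hypothesis \(\rateCov<\min\{\rateOsc,\rateSm\}\). The lower bound therefore transfers, for \(T\ge T_0\).
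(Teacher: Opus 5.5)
Your final flat-window construction does not reach order $T^{-\gamma}$. The step that fails is the claim that $\Pp(Z\in\Gamma^\delta)$ is exponentially small at $\delta=cT^{-\gamma}$.

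With $\ell=T^{-2\gamma}$ you have $\ell/\delta^2=c^{-2}$, so you are not in the regime $\delta\ll\sqrt\ell$ that you invoke. For a single fixed window of length $\ell$, the small-ball probability $\Pp(\sup_{0\le s\le\ell}|W(a+s)-W(a)|\le2\delta)$ equals a positive constant depending only on $c$ (of order $e^{-C/c^2}$). It is therefore much larger than $T^{-\gamma}$. For your set of paths that are nearly flat on \emph{some} window it is worse. There are about $\ell^{-1}$ disjoint windows, each slow independently with a fixed positive probability, so Brownian motion lies in $\Gamma^\delta$ with probability tending to one.

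Pushing the small-ball probability below $T^{-\gamma}$ forces $\delta\lesssim\sqrt{\ell/\log T}$, so the flatness test certifies at most $T^{-\gamma}/\sqrt{\log T}$. Lengthening the block to compensate breaks the budget $\Pp(B=1)\,\ell\lesssim T^{-3\gamma}$ imposed by Assumption~\ref{ass:rate-covariance-stabilization}. Rebalancing $\Pp(B=1)$ against $\ell$ within that budget still leaves a logarithmic loss.

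Testing the block through $x(a)=x(b)$ at fixed interior times $a<b$ does not help either. A $J_1$ time change can translate both times by order $\delta\gg\ell$. Among the roughly $\delta/\ell$ nearly independent Brownian increments of length $\ell$ in that range, one falls below $2\delta$ with high probability. Only the endpoints are pinned by $\Lambda$.

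The missing idea is to put the rare event into a marginal that time changes cannot move, and to make it an atom. Anticoncentration is then linear in $\delta$ rather than of small-ball type. The paper stops a Brownian motion at $\tau_T=\inf\{s\ge T-h_T:W(s)=0\}\wedge T$ with $h_T=\tfrac14T^{1-2\gamma}$, and sets $\epsilon_j=W(j\wedge\tau_T)-W((j-1)\wedge\tau_T)$. L\'evy's arcsine law gives $\Pp(\tau_T<T)\asymp T^{-\gamma}$ and $\E(T-\tau_T)\asymp T^{1-3\gamma}$. Hence the covariance discrepancy is $O(m^{-3\gamma})$ at every horizon $m$, while $\bmM_T(1)=W(\tau_T)/\sqrt T$ has an atom at $0$ of mass $\asymp T^{-\gamma}$.

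For $\Gamma=\{x:x(1)=0\}$, every $\lambda\in\Lambda$ fixes $1$, so $\Gamma^\delta\subseteq\{y:|y(1)|<\delta\}$ and $\Pp(W_\star^{(1)}\in\Gamma^\delta)\le\sqrt{2/\pi}\,\delta$. Taking $\delta$ a small multiple of $T^{-\gamma}$ then violates the Prokhorov inequality with no logarithmic loss.

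The rest of your plan matches the paper and goes through once the construction is replaced. That includes one oracle per horizon with the other horizons checked separately, verification of the assumptions, and transfer from $\bmM_T$ to $\Phi_T$ via the remainder bounds under $\rateCov<\min\{\rateOsc,\rateSm\}$.
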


\begin{proof}[Proof idea]
The key idea is to construct a noise process whose cumulative conditional variance is close to that of Brownian motion, while its terminal value has an atom at zero. This atom forces a Brownian approximation error of order \(T^{-\gamma}\). Let \(W\) be a standard Brownian motion, and let \(\tau_T\) be its first hitting time of zero after \(T-\frac14T^{1-2\gamma}\), truncated at \(T\):
\[
\tau_T
\coloneqq
\inf\left\{
t\ge T-\frac14T^{1-2\gamma}:W(t)=0
\right\}\wedge T.
\]
Define the noise sequence of the stochastic oracle \(\mathcal M_T\) by
\[
\epsilon_j^{\mathcal M_T}
\coloneqq
W(j\wedge\tau_T)-W((j-1)\wedge\tau_T),
\qquad 1\le j\le T.
\]
These are martingale differences obtained by stopping Brownian motion at \(\tau_T\). If \(W\) hits zero before \(T\), the stopped process remains at zero, creating an atom in the terminal partial sum. Specifically,
\[
  \Pp(W(\tau_T)=0)=\Pp(\tau_T<T)\asymp T^{-\gamma},
  \qquad
  \frac{\E(T-\tau_T)}{T}\asymp T^{-3\gamma}.
\]
The first relation gives an atom of mass at least \(cT^{-\gamma}\) at zero in \(\bmM_T^{\mathcal M_T}(1)=W(\tau_T)/\sqrt T\). The second quantifies the expected maximum discrepancy in normalized cumulative conditional variance, which arises from the Brownian increments removed after \(\tau_T\).

To obtain the Prokhorov lower bound, take \(A=\{x:x(1)=0\}\). 
Under the $J_1$ topology, every admissible time change fixes the endpoint, so the \(\delta\)-neighborhood \(A^\delta\) of \(A\) satisfies \(A^\delta\subseteq\{x:|x(1)|<\delta\}\). Since \(W_\star^{(1)}(1)\) is standard Gaussian, we have
\[
  \Pp(\bmM_T^{\mathcal M_T}\in A)=\Pp(W(\tau_T)=0)\ge cT^{-\gamma},
  \qquad
  \Pp(W_\star^{(1)}\in A^\delta)\le C\delta.
\]
A Prokhorov distance smaller than \(\delta\) would require \(cT^{-\gamma}\le (C+1)\delta\), which fails when \(\delta\) is a sufficiently small multiple of \(T^{-\gamma}\). Thus
\(d_{\mathrm P}(\bmM_T^{\mathcal M_T},W_\star^{(1)})\gtrsim T^{-\gamma}\).
Finally, when \(\gamma<\min\{\rateOsc,\rateSm\}\), the linear-SA remainder estimates yield $ d_{\mathrm P}(\Phi_T^{\mathcal M_T},\bmM_T^{\mathcal M_T})
  =o(T^{-\gamma}).$
The triangle inequality transfers the lower bound to the full SA path. \suppappendixref{app:rate-covariance-stabilization-lower} gives the complete construction and verifies the assumptions.

\end{proof}


\subsubsection{Error due to limited noise moments}
\label{sec:rate-lower-moments}

For the finite-moment contribution, we consider scalar linear SA with centered, unit-variance i.i.d.\ noise under one fixed \(p\)th-moment bound. The conditional covariance is now exact, but rare large increments can still create unusually large local fluctuations. The following lower bound applies directly to the full SA path over this class.

\begin{theorem}[Worst-case finite-moment lower bound for linear SA]
\label{thm:rate-iid-leading-minimax}
Suppose Assumption~\ref{ass:rate-gain} holds and \(\rateOsc<\rateSm\).
Fix \(p>2\) and \(q\in[1,p/2]\).  One can construct an oracle class
\(\mathfrak C_{\mathrm{osc}}\) whose dynamics satisfy
Assumptions~\ref{ass:rate-linear-dynamics}--
\ref{ass:rate-trajectory-stability} uniformly over the class.  Moreover,
there exist constants \(c>0\) and \(T_0<\infty\) such that, for every integer
\(T\ge T_0\),
\begin{equation}
  \mathcal D_T(\mathfrak C_{\mathrm{osc}})
  \ge cT^{-\frac{p-2}{2(p+1)}}.
  \label{eq:linear-iid-leading-minimax}
\end{equation}
\end{theorem}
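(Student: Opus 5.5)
The plan is to build, for each horizon \(T\), a scalar linear oracle \(\mathcal M_T\) with mean field \(g(x)=x\) (so \(G=1\)) and i.i.d. noise. The noise should contain one rare large jump component whose size and frequency are tuned to the exponent \(\rateOsc\). Concretely, set \(h\coloneqq T^{-\rateOsc}\) and \(\pi_T\coloneqq h^{p}/T^{p/2}\cdot T^{\,?}\); the intended choice is \(\epsilon_j=\sqrt{1-\sigma_T^2}\,N_j+\sigma_T R_j\). Here \(N_j\) is standard Gaussian and \(R_j\) is a symmetric three-point variable taking the values \(\pm a_T\) with probability \(\pi_T/2\) each and \(0\) otherwise. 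We take \(a_T=h\sqrt T\), so that a single jump moves the normalized path by \(h\). The remaining constants are fixed by two requirements. First, the unit-variance normalization gives \(\sigma_T^2\pi_Ta_T^2\le1\). Second, the uniform moment bound \(\E|\epsilon_j|^p\le M_p\) forces \(\pi_Ta_T^p\lesssim1\), i.e. \(\pi_T\asymp T^{-p/2}h^{-p}\). The expected number of jumps in \([0,T]\) is then \(T\pi_T=T^{1-p/2}h^{-p}\). Balancing this against \(h\) gives \(T^{1-p/2}h^{-p}\asymp h\), i.e. \(h=T^{-(p-2)/(2(p+1))}\), which is exactly \(T^{-\rateOsc}\). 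With this choice, the probability \(\Pp(\text{at least one jump})\asymp h\). Assumptions~\ref{ass:rate-linear-dynamics}--\ref{ass:rate-martingale-noise} hold trivially and uniformly in \(T\), with \(C_S=0\) in Assumption~\ref{ass:rate-covariance-stabilization}. Assumption~\ref{ass:rate-trajectory-stability} follows from the standard moment recursion for scalar linear SA, since the \(2q\le p\) moments of \(\epsilon\) are uniformly bounded.

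Next we find a Borel set that separates the two laws. We first work with \(\bmM_T\), the Gaussian part plus jumps. Take \(\Gamma\) to be the set of paths having some increment over a window of length \(T^{-1}\) (one grid step) of absolute size at least \(h/2\). For the martingale path, a jump occurs with probability \(\gtrsim h\), and it produces such an increment of size \(\approx h\sigma_T\); since \(\sigma_T\) can be taken to be a constant such as \(1/2\), the increment is of order \(h\) up to a constant. The Gaussian contributions to one-step increments are of order \(\sqrt{\log T/T}\ll h\). For the Brownian target, a \(\delta\)-enlargement of \(\Gamma\) in \(d_{\mathrm S}\) with \(\delta=c h\) would require Brownian motion to have an oscillation of order \(h\) over a window of length \(\lesssim T^{-1}+\delta\)-distorted time. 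The time change alters lengths only by the factor \(e^{\delta}\approx1\), and the sup perturbation costs at most \(2\delta\). Modulus-of-continuity bounds for Brownian motion make this probability \(\exp(-c h^2 T)\), which is negligible. Hence \(\Pp(\bmM_T\in\Gamma)\ge c' h>\Pp(W\in\Gamma^{\delta})+\delta\) when \(\delta\) is a small multiple of \(h\), which gives \(d_{\mathrm P}\gtrsim h\).

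The main obstacle is transferring this lower bound to the SA path \(\Phi_T\). Algorithmic smoothing spreads each jump over about \(\eta_j^{-1}\asymp j^{\alpha}\) steps, so the one-step increment statistic no longer sees the jump. The fix I would try first is to enlarge the windows to length \(\ell_T\asymp T^{\alpha-1}\) (in rescaled time), over which the SA response to a jump has essentially fully accumulated. Over such windows the Brownian oscillation is \(\sqrt{\ell_T\log T}=T^{-\rateSm}\sqrt{\log T}\). This is \(\ll h\) precisely because \(\rateOsc<\rateSm\), which is where that hypothesis enters. Alternatively, I would invoke the upper-bound machinery: in the proof of Theorem~\ref{thm:linear-fclt-rate}, with \(C_S=0\) and no nonlinearity, \(d_{\mathrm P}(\Phi_T,\bmM_T)\lesssim T^{-\rateSm}\sqrt{\log T}=o(h)\). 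Then the triangle inequality for \(d_{\mathrm P}\) transfers the bound \(d_{\mathrm P}(\bmM_T,W)\gtrsim h\) to \(d_{\mathrm P}(\Phi_T,Z)\gtrsim h\) for \(T\ge T_0\). I would use this second route, verifying carefully that the smoothing-term estimate from the upper-bound proof applies uniformly over the class \(\mathfrak C_{\mathrm{osc}}=\{\mathcal M_T\}\).
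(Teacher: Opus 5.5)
\textbf{Gap in the chosen transfer step.} Your noise construction is the right one: a symmetric jump of size $h\sqrt T$ with frequency $\asymp h/T$, where $h\asymp T^{-\rateOsc}$. It matches the paper's four-point law $\pm u_T,\pm v_T$ up to inessential changes. One small fix: with $\epsilon_j=\sqrt{1-\sigma_T^2}N_j+\sigma_TR_j$ the variance is $1-\sigma_T^2(1-\pi_Ta_T^2)\neq1$, so the Gaussian coefficient should be $(1-\sigma_T^2\pi_Ta_T^2)^{1/2}$.

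The genuine gap is the second route, which is the one you chose. The bound $d_{\mathrm P}(\Phi_T,\bmM_T)\lesssim T^{-\rateSm}\sqrt{\log T}$ is not what the upper-bound proof gives. There the endpoint-varying term $\bpsi_{3,T}$ is controlled through Lemma~\ref{lem:linear-rate-filter}. Its polynomial tail $T^{1-p/2}x^{-p}$ yields $\widetilde d(\bpsi_{3,T})\lesssim T^{-\rateOsc}+T^{-\rateSm}\sqrt{\log T}$, an error of the same order as $h$. This is exactly why the paper can transfer by the triangle inequality in the covariance lower bound, where $\rateCov<\rateOsc$, but not here.

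The loss is also not an artifact of the estimate. At the grid point $j/T$ of a jump, $\Phi_T(j/T)-\bmM_T(j/T)=T^{-1/2}\sum_{i\le j}(A_i^j-1)\epsilon_i$ contains the term $(\eta_j-1)\sigma_Ta_T/\sqrt T\approx-\sigma_T h$. The remaining terms have standard deviation $O(T^{-\rateSm})$. In other words, the martingale path registers the jump at once, while the SA path does so only gradually. In fact your own one-step test set $\Gamma$ also separates $\bmM_T$ from $\Phi_T$. A jump moves $\Delta_{t+1}$ by at most $\eta_j a_T$, so the SA path's increments over windows of length $O(1/T)$ are $o(h)$ outside an event of probability $o(h)$. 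Hence $d_{\mathrm P}(\Phi_T,\bmM_T)\gtrsim h$. The inequality $d_{\mathrm P}(\Phi_T,W)\ge d_{\mathrm P}(\bmM_T,W)-d_{\mathrm P}(\Phi_T,\bmM_T)$ therefore subtracts two quantities of the same order and proves nothing.

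\textbf{The route that works.} It is the first one you sketched, and it is exactly the paper's proof. Test $\Phi_T$ directly with the set of paths having an increment of at least $ch$ over some window $[j/T,(j+\lfloor\eta_j^{-1}\rfloor)/T]$.

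Consider the event $\mathcal E_j$ of exactly one large input, at an index $j\in[T/3,T/2]$. On it, the jump contributes $\frac{\epsilon_j\eta_j}{\sqrt T}\sum_{r=1}^{\eta_j^{-1}}\prod_{k=j+1}^{j+r}(1-\eta_k)$ to that increment. This is at least $\frac34(1-e^{-1})$ times the jump size, a fixed fraction rather than the full jump. The remainder $R_{j,T}$ collects all other inputs, including older ones still propagating through the recursion. It has conditional variance at most $C/(T\eta_j)\asymp T^{\alpha-1}=o(h^2)$. Chebyshev's inequality and disjointness of the $\mathcal E_j$ then give probability $\gtrsim h$ for the SA path.

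On the Brownian side, a $J_1$-enlargement of radius $c'h$ forces an increment $\gtrsim h$ over a window of length at most $CT^{\alpha-1}$. Lemma~\ref{lem:rate-brownian-modulus} bounds this probability by $CT^{1-\alpha}\exp\{-cT^{1-\alpha}h^2\}=o(h)$. The hypothesis $\rateOsc<\rateSm$ is used on both sides. You need to carry out this direct calculation on $\Phi_T$ rather than rely on the upper-bound remainder estimates.
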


\begin{proof}[Proof idea]
The lower bound comes from a short-time fluctuation that occurs
much more often in the SA path than in Brownian motion. With \(\rho_T=T^{-\rateOsc}\), we construct centered, unit-variance i.i.d.\ noise with a rare component: an input has magnitude of order \(\sqrt T\,\rho_T\) with probability of order \(\rho_T/T\), and is otherwise bounded. This construction satisfies the uniform \(p\)th-moment bound because $ \frac{\rho_T}{T}(\sqrt T\,\rho_T)^p=1.$
Among the first \(T\) updates, the probability that exactly one large input occurs at an index \(j\in[T/3,T/2]\) is of order \(\rho_T\).

Suppose this large input occurs at \(j\). It enters the update for \(x_{j+1}\), and its effect persists through the subsequent SA iterates. 
Over the \(m_j=\lfloor\eta_j^{-1}\rfloor\asymp T^\alpha\) updates following index \(j\), it produces a change of order \(\rho_T\) in the normalized path \(\Phi_T^{\mathcal M_T}\).
 The other inputs do not typically obscure this change: conditional on there being no other large input, their contribution is centered with standard deviation at most
\(CT^{-\rateSm}=o(\rho_T)\), since \(\rateOsc<\rateSm\). Thus the path changes by at least \(c_0\rho_T\) over this interval with conditional probability tending to one, for some \(c_0>0\). We record such a change through the event
\[
  \mathcal J_T
  \coloneqq
  \left\{x:
    \max_{\lceil T/3\rceil\le j\le\lfloor T/2\rfloor}
    \left|x\!\left(\frac{j+m_j}{T}\right)
          -x\!\left(\frac jT\right)\right|
    \ge c_0\rho_T
  \right\}.
\]

The intervals in \(\mathcal J_T\) have length
\(m_j/T\asymp T^{\alpha-1}\). A Brownian increment over an interval this short is unlikely to reach size \(\rho_T\), even after searching over all the indices in the definition of \(\mathcal J_T\). This remains true for a sufficiently small \(J_1\) neighborhood \(\mathcal J_T^{c'\rho_T}\): being within \(J_1\) distance \(c'\rho_T\) can change the size of an increment by at most \(2c'\rho_T\) and stretch its time interval by at most a factor \(e^{c'\rho_T}\). Hence the construction and the Brownian modulus bound yield
\[
  \Pp(\Phi_T^{\mathcal M_T}\in\mathcal J_T)\ge c\rho_T,
  \qquad
  \Pp\!\left(W_\star^{(1)}\in\mathcal J_T^{c'\rho_T}\right)
  =o(\rho_T).
\]
For \(c'<c\) sufficiently small, these two probabilities violate the Prokhorov inequality at radius \(c'\rho_T\) for all sufficiently large \(T\). Therefore
\[
  d_{\mathrm P}(\Phi_T^{\mathcal M_T},W_\star^{(1)})
  \gtrsim \rho_T=T^{-\rateOsc}.
\]
The complete proof is given in \suppappendixref{app:rate-iid-leading-lower}.

\end{proof}


\subsubsection{Error due to noise propagation through SA}
\label{sec:rate-lower-smoothing}

For algorithmic smoothing, a class containing just one scalar linear SA model with i.i.d.\ Gaussian noise is enough. Each noise input contributes to the partial-sum path over several updates, making it locally smoother than Brownian motion. The model remains unchanged as \(T\) grows, so the lower bound below reflects a persistent feature of the recursion.

\begin{theorem}[Sharp intrinsic rate in the scalar Gaussian model]
\label{thm:linear-gaussian-j1-sharpness}
Suppose Assumption~\ref{ass:rate-gain} holds.  Let
\(\mathfrak C_{\mathrm{sm}}=\{\mathcal M_{\mathrm{sm}}\}\), where
\(\mathcal M_{\mathrm{sm}}\) is the exact-linear scalar oracle with
\(g_{\mathcal M_{\mathrm{sm}}}(x)=x\) and i.i.d.\ standard Gaussian noise.
For this oracle,
\(G_{\mathcal M_{\mathrm{sm}}}=S_{\mathcal M_{\mathrm{sm}}}=1\), so
\(Z^{\mathcal M_{\mathrm{sm}}}=W_\star^{(1)}\).
The dynamics generated by this oracle satisfy
Assumptions~\ref{ass:rate-linear-dynamics}--
\ref{ass:rate-trajectory-stability}.  There are constants \(c>0\) and
\(T_0<\infty\), independent of \(T\), such that, for every integer
\(T\ge T_0\),
\begin{equation}
  \mathcal D_T(\mathfrak C_{\mathrm{sm}})
  \ge cT^{-\frac{1-\alpha}{2}}\sqrt{\log T}.
  \label{eq:linear-gaussian-j1-order}
\end{equation}
\end{theorem}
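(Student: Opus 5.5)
The plan is to separate the two laws with a single path functional: the maximal increment over windows of length \(h_T\asymp T^{\alpha-1}\). For Brownian motion, the Lévy modulus makes this maximum about \(\sqrt{2h_T\log(1/h_T)}\asymp T^{-\rateSm}\sqrt{\log T}\). For the SA path, each window increment has variance a constant factor \(c_\kappa<1\) smaller, so its maximum is about \(\sqrt{2c_\kappa h_T\log(1/h_T)}\). The gap between these two levels is of the claimed order \(T^{-\rateSm}\sqrt{\log T}\), and it should survive \(J_1\) perturbations of that size.

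\emph{Step 1 (variance deficit).} For the oracle \(\mathcal M_{\mathrm{sm}}\) the recursion is \(x_{t+1}=(1-\eta_t)x_t+\eta_t\epsilon_t\), with \(x_1=0\) and i.i.d.\ standard Gaussian \(\epsilon_t\). The path is therefore a centered Gaussian process. Assumptions~\ref{ass:rate-linear-dynamics}--\ref{ass:rate-trajectory-stability} are immediate: \(g(x)=x\), \(\gamma\) is arbitrary since \(C_S=0\), Gaussian moments of all orders are bounded, and \(\E[x_t^{2q}]\lesssim\eta_t^q\) follows by the standard recursion on the variance.

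Fix a small \(\kappa>0\) and set \(m_T=\lfloor\kappa T^{\alpha}\rfloor\) and \(h_T=m_T/T\). Restrict to windows \([n/T,(n+m_T)/T]\) with \(n\in[T/2,T-m_T]\); there \(\eta_t\asymp T^{-\alpha}\) is constant up to a factor \(1+O(\kappa/T)\). Writing the window sum of \(x_{t+1}\) via the summation-by-parts identity \(\sum x_{t+1}=\sum\epsilon_t-\sum(x_{t+1}-x_t)/\eta_t\), I will compute
\[
\mathrm{Var}\bigl(\Phi_T((n+m_T)/T)-\Phi_T(n/T)\bigr)\le c_\kappa h_T,\qquad c_\kappa<1,
\]
uniformly in \(n\). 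This is an Ornstein--Uhlenbeck-type computation. Over a window of \(\kappa/\eta\) steps, the increment equals the noise sum minus a boundary term comparable to \(x_{n+m}-x_n\) scaled by \(\eta^{-1}\). This term absorbs a fixed fraction of the variance; heuristically the integrated OU variance is \(L-(1-e^{-\eta L})/\eta+\cdots\), strictly below \(L\) by a constant fraction when \(\eta L=\kappa\).

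\emph{Step 2 (SA event has high probability).} Let \(\Gamma_T=\{f:\max_{n}|f((n+m_T)/T)-f(n/T)|\le a_T\}\), with the maximum over a grid of \(N\asymp T^{1-\alpha}/\kappa\) disjoint windows and \(a_T=\sqrt{2c'h_T\log(1/h_T)}\) for some \(c_\kappa<c'<1\). A Gaussian union bound with the variance bound of Step~1 gives
\[
\Pp(\Phi_T\in\Gamma_T)\ge1-N h_T^{c'/c_\kappa}\to1.
\]

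\emph{Step 3 (Brownian neighbourhood has small probability).} Suppose \(d_{\mathrm S}(f,W^{(1)}_\star)<\delta_T:=c''T^{-\rateSm}\sqrt{\log T}\) with \(f\in\Gamma_T\). Then there is a time change \(\lambda\) with \(\sup|\lambda(r)-r|\le e^{\delta_T}-1=O(\delta_T)\) (this uses \(\lambda(0)=0\)) such that the increments of \(W^{(1)}_\star\) over the windows \([\lambda^{-1}(n/T),\lambda^{-1}((n+m_T)/T)]\) are at most \(a_T+2\delta_T\). These windows are still disjoint-ish, with lengths within \(h_T(1\pm O(\delta_T))\). Their endpoints are perturbed by \(O(\delta_T)\), which is much larger than \(h_T\) only if \(\alpha\) is small. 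The main obstacle is exactly this: the time perturbation \(O(\delta_T)\) may exceed \(h_T\), so the windows cannot be matched pointwise.

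To handle it, I will use the stronger fact that \(\lambda\) has log-derivative bounded by \(\delta_T\). Hence \(\lambda^{-1}\) maps each window onto an interval of length \(h_T e^{\pm\delta_T}\) located \emph{somewhere}. I then bound the Brownian event by
\[
\Pp\Bigl(\sup_{|I|\ge h_T/2,\ I\subset[1/2,1]}\ \text{(window of some grid of starting points)}\ \cdots\Bigr).
\]
Concretely, it suffices to use \(\sup_{s}|W(s+\ell)-W(s)|\) over a fine grid in \(s\) and in \(\ell\in[h_Te^{-\delta_T},h_Te^{\delta_T}]\). The needed conclusion is that some interval of length \(\approx h_T\) in \([1/2,1]\) carries an increment below \(a_T+3\delta_T\) for \emph{every} interval in a family of \(N\) disjoint such intervals. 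Since the \(\lambda\)-images of disjoint windows remain disjoint, the probability is at most
\[
\bigl(1-\Pp(|W(h_T)|\ge a_T+3\delta_T+\text{Brownian oscillation on scale }\delta_T h_T)\bigr)^{N}.
\]
With \(c''\) small and \(a_T+3\delta_T\le\sqrt{2c'''h_T\log(1/h_T)}\) for some \(c'''<1\), each window exceeds the threshold with probability at least \(h_T^{c'''}\gg1/N\). This gives a bound of \(\exp(-Nh_T^{c'''})\to0\). To make the "located somewhere" step rigorous, I would first fix the starting points on a mesh of width \(\delta_Th_T\) and pay a Lévy-modulus cost that is \(o(\delta_T)\).

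\emph{Step 4 (conclusion).} Combining Steps~2 and~3 gives \(\Pp(\Phi_T\in\Gamma_T)-\Pp(W^{(1)}_\star\in\Gamma_T^{\delta_T})\to1>\delta_T\). Therefore \(d_{\mathrm P}(\Phi_T,W^{(1)}_\star)\ge\delta_T=c\,T^{-\rateSm}\sqrt{\log T}\) for \(T\ge T_0\). Since \(\mathfrak C_{\mathrm{sm}}\) consists of this single oracle, this is exactly the bound on \(\mathcal D_T(\mathfrak C_{\mathrm{sm}})\).
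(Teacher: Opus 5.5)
\paragraph{The gap is in Step~3, and it is not only a matter of rigor.}
The event $W_\star^{(1)}\in\Gamma_T^{\delta_T}$ requires only \emph{some} $f\in\Gamma_T$ and \emph{some} time change $\lambda$, and $\lambda$ may be chosen after the Brownian path is seen. The windows $[\lambda^{-1}(n/T),\lambda^{-1}((n+m_T)/T)]$ are therefore path-dependent. So the product bound $(1-p)^N$, which holds for a fixed family of disjoint windows, does not apply.

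Note also that $\delta_T\asymp\sqrt{h_T\log(1/h_T)}\gg h_T$ for every $\alpha$, so this mismatch is always present, not only for small $\alpha$. Your mesh device turns the problem into a union bound over discretized time changes, and their number is exponential. The phase of the window grid can drift by about $\delta_Th_T$ per window. Even after grouping windows into blocks of $\epsilon/\delta_T$ windows, there are roughly $3^{N\delta_T/\epsilon}$ configurations, against a per-configuration bound $\exp(-cNh_T^{c'''})$. This closes only if $h_T^{c'''}\gg\delta_T$, roughly $c'''<1/2$. Your condition $h_T^{c'''}\gg 1/N$ does not suffice.

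For $c'$ near $1$ the conclusion of Step~3 is actually false. When $c'>3/4$, with high probability the exceedances of $|W(s+h_T)-W(s)|$ over level $a_T$ form clusters of diameter below $h_T$. These clusters are separated by far more than $1/\delta_T$ windows. A time change with log-slopes below $\delta_T$ can therefore slide the grid between clusters. This gives $W\circ\lambda^{-1}\in\Gamma_T$, hence $W\in\Gamma_T^{\delta_T}$, with probability tending to one.

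\paragraph{How the paper avoids this.}
The obstacle comes from testing a fixed disjoint grid, which is not stable under time changes. The paper tests the full modulus instead:
$\mathcal{J}_T=\{x:\sup_{s,t\in[1/4,3/4],\,|t-s|\le e^{\delta_T}h_T}|x(t)-x(s)|\le ub_T\}$, with $b_T=\sqrt{2h_T\log(1/h_T)}$, $\delta_T=c_1b_T$ and $c_1<(v-u)/2$.

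Step~3 then becomes a deterministic inclusion. Since $|\lambda(t)-\lambda(s)|\le e^{\delta_T}|t-s|$ and $\lambda([1/3,2/3])\subset[1/4,3/4]$, every $y\in\mathcal{J}_T^{\delta_T}$ satisfies $|y(t)-y(s)|<ub_T+2\delta_T<vb_T$ whenever $s,t\in[1/3,2/3]$ and $|t-s|\le h_T$. The Brownian probability is then computed on fixed disjoint windows, where your product bound is legitimate.

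The price moves to Step~2. The SA event must hold over $\asymp T^{1+\alpha}$ grid pairs rather than $N$ windows. So you need the window variance to be a \emph{small} multiple $c_\kappa h_T$, and must choose $\kappa$ with $u^2(1-\alpha)/c_\kappa>1+\alpha$. The paper gets $C_0c_0h_T$ from Minkowski's inequality and $\E[\Delta_t^2]\lesssim T^{-\alpha}$; your Ornstein--Uhlenbeck computation also gives $c_\kappa\to0$ as $\kappa\to0$.

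Alternatively, take the maximum in $\Gamma_T$ over all integer starting points $n$ rather than a disjoint grid, with a union bound over $\asymp T$ windows in Step~2. The points $\lambda^{-1}(n/T)$ are then $e^{\delta_T}/T$-dense. Every Brownian window of length $h_T$ is matched up to a L\'evy-modulus error at scale $O(\delta_Th_T)$, which is $o(\delta_T)$.
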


\begin{proof}[Proof idea]
The lower bound comes from a difference in local fluctuations:
the SA path is likely to have small increments over all short
intervals, whereas Brownian motion is unlikely to do so.
Let \(\Phi_T=\Phi_T^{\mathcal M_{\mathrm{sm}}}\) denote the
normalized partial-sum path in the scalar Gaussian model, and
let \(W_\star^{(1)}\) be its standard Brownian target.
Fix \(0<u<v<1\), and put \(h_T=c_0T^{\alpha-1}\),
\(b_T=\sqrt{2h_T\log(1/h_T)}\), and \(\delta_T=c_1b_T\),
where \(c_0>0\) is sufficiently small and
\(0<c_1<(v-u)/2\). Consider the event
\[
  \mathcal J_T
  \coloneqq
  \left\{x\in C([0,1],\R):
    \sup_{\substack{s,t\in[1/4,3/4]\\
                    |t-s|\le e^{\delta_T}h_T}}
    |x(t)-x(s)|\le ub_T
  \right\},
\]
which requires every increment over the indicated short
intervals to have magnitude at most \(ub_T\).

For the SA path \(\Phi_T\), linearity of the recursion and
Gaussian noise imply that each increment
\(\Phi_T(k/T)-\Phi_T(j/T)\) is centered Gaussian.
The recursion bounds the variances of the grid increments
needed to cover these short intervals by \(Cc_0h_T\).
Gaussian tail bounds, a union bound, and linear interpolation
therefore give \(\Pp(\Phi_T\in\mathcal J_T)\to1\) for
sufficiently small \(c_0\).
In contrast, Brownian motion \(W_\star^{(1)}\) has order
\(h_T^{-1}\) disjoint increments of length \(h_T\) in
\([1/3,2/3]\). These increments are independent, each with
distribution \(N(0,h_T)\). The Gaussian tail lower bound
shows that at least one exceeds \(vb_T\) in absolute value
with probability tending to one.

Under the \(J_1\) topology, with the metric \(d_{\mathrm S}\),
a path \(y\in\mathcal J_T^{\delta_T}\) can be compared with
some \(x\in\mathcal J_T\) through a time change \(\lambda\)
such that
\[
  |y(r)-x(\lambda(r))|<\delta_T,
  \qquad
  |\lambda(t)-\lambda(s)|\le e^{\delta_T}|t-s|.
\]
Thus, if \(|t-s|\le h_T\), the corresponding times
\(\lambda(s)\) and \(\lambda(t)\) are at most
\(e^{\delta_T}h_T\) apart. For sufficiently large \(T\),
the time change maps \([1/3,2/3]\) into \([1/4,3/4]\),
so the definition of \(\mathcal J_T\) gives
\(|x(\lambda(t))-x(\lambda(s))|\le ub_T\).
The values of \(y\) differ from those of the time-changed
path \(x\) by less than \(\delta_T\) at each endpoint.
Therefore \(|y(t)-y(s)|\le ub_T+2\delta_T<vb_T\).
Consequently,
\[
  \Pp(\Phi_T\in\mathcal J_T)\to1,
  \qquad
  \Pp(W_\star^{(1)}\in\mathcal J_T^{\delta_T})\to0.
\]
Given the last result, since \(\delta_T\to0\), for all sufficiently large \(T\),
\[
  \Pp(\Phi_T\in\mathcal J_T)
  >
  \Pp(W_\star^{(1)}\in\mathcal J_T^{\delta_T})+\delta_T.
\]
This violates the defining Prokhorov inequality at radius
\(\delta_T\), so
\(d_{\mathrm P}(\Phi_T,W_\star^{(1)})
\ge\delta_T\asymp T^{-\rateSm}\sqrt{\log T}\).
The complete proof is given in
\suppappendixref{app:linear-gaussian-j1-sharpness}.

\end{proof}

This single model rules out improving either the algorithmic smoothing exponent or its square-root logarithmic factor in a uniform bound over any class containing it, with the same normalization and \(J_1\) Skorokhod metric.

\subsubsection{Error due to nonlinearity}
\label{sec:rate-lower-nonlinearity}

For the nonlinear contribution, we allow the mean field to vary across models while keeping a common trajectory-moment bound. When the mean field is small relative to the iterate error, an initial deviation can persist and produce a cumulative error. The construction has \(S=0\), so its Gaussian target is the zero path. Its local regularity parameters vary with the construction horizon, which limits the sense in which this lower bound can match the upper bound.

\begin{theorem}[A nonlinear obstruction under a trajectory moment bound]
\label{thm:rate-nonlinear-oracle-lower}
Fix \(\alpha\in(1/2,1)\), \(\eta>0\), \(p>2\), and
\(q\in[1,p/2]\), and set \(\eta_t=\eta t^{-\alpha}\).
One can construct a fixed stochastic-oracle class
\(\mathfrak C_{\mathrm{nl}}\) such that every
\(\mathcal M\in\mathfrak C_{\mathrm{nl}}\) satisfies
Assumptions~\ref{ass:rate-linear-dynamics}--
\ref{ass:rate-trajectory-stability}, with limiting variance
\(S_{\mathcal M}=0\).  Consequently, the limiting path
\(Z^{\mathcal M}\) is identically zero for every
\(\mathcal M\in\mathfrak C_{\mathrm{nl}}\).  Moreover, there exist a
constant \(c>0\) and
\(T_0<\infty\) such that, for every integer \(T\ge T_0\),
\begin{equation}
  \sup_{\mathcal M\in\mathfrak C_{\mathrm{nl}}}
  d_{\mathrm P}\!\left(\Phi_T^{\mathcal M},0\right)
  \ge cT^{-\frac{(\alpha-1/2)q}{q+1}}.
  \label{eq:rate-nonlinear-oracle-lower}
\end{equation}
\end{theorem}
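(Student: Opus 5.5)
We will build, for each horizon \(T\), a scalar oracle \(\mathcal M_T\) and take \(\mathfrak C_{\mathrm{nl}}=\{\mathcal M_T:T\ge T_0\}\). The idea is to make the path \(\Phi_T^{\mathcal M_T}\) equal, up to negligible terms, to a cumulative nonlinear remainder with an extremal two-point law. In the proof of Theorem~\ref{thm:linear-fclt-rate}, the nonlinear term is controlled by the \(L^q\) bound \(\E|g(x_t)-G\Delta_t|^q\le C\eta_t^q\) combined with Markov's inequality. Summing and normalizing gives a variable \(R_T\) with \(\|R_T\|_{L^q}\lesssim T^{-(\alpha-1/2)}\). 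The radius \(\delta\) solving \(\delta^{q+1}=T^{-(\alpha-1/2)q}\) is exactly where Markov becomes tight. So I will aim for \(R_T\approx\delta_T\) with probability \(\asymp\delta_T\) and \(R_T\approx0\) otherwise, where \(\delta_T\coloneqq T^{-(\alpha-1/2)q/(q+1)}\).

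\emph{Construction.} Take \(G=1\) and \(x^\star=0\). The noise is \(\epsilon_1=\pm a_T/\eta_1\), each sign with probability \(\pi_T/2\), \(\epsilon_1=0\) otherwise, and \(\epsilon_t\equiv0\) for \(t\ge2\). Since \(x_1=0\), this gives \(x_2\in\{0,\pm a_T\}\). Because \(\epsilon_t=0\) for all large \(t\), the conditional covariance vanishes eventually, so \(S=0\), Assumption~\ref{ass:rate-covariance-stabilization} holds trivially with any \(\gamma\), and \(Z^{\mathcal M_T}\equiv0\). The mean field is \(g(x)=x\) for \(|x|\le a_T/2\) and for \(|x|\ge 4a_T\), and \(g(x)=\kappa_Tx\) on \(a_T/4\cdot2\le|x|\le 2a_T\) (that is, on \([a_T/2\cdot 1,2a_T]\) after adjusting the cut points), with a Lipschitz odd interpolation in between. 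Here \(\kappa_T\coloneqq c\,T^{\alpha-1}\) is a small slope. This mean field is ``small relative to the iterate error'' on the window.

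Then \(|g(x)-x|\le 4a_T\le C_{\mathrm{bd}}\) and \(|g(x)-x|\le a_T^{-1}x^2\), so \(C_{\mathrm{sm}}=1/a_T\). This \(C_{\mathrm{sm}}\) depends on \(T\), which is the horizon dependence mentioned before the statement. On the event \(x_2=\pm a_T\), the recursion \(x_{t+1}=(1-\eta_t\kappa_T)x_t\) keeps \(|x_t|\ge a_T/2\) for all \(t\le T\), because \(\kappa_T\sum_{t\le T}\eta_t\lesssim c\) is small. After time \(T\) the iterate decays like \(\exp(-\kappa_T\eta t^{1-\alpha}/(1-\alpha))\), eventually enters the linear region, and then converges geometrically. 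Hence on that event \(|\Phi_T^{\mathcal M_T}(1)|\ge\tfrac12 a_T\sqrt T\), and otherwise \(\Phi_T^{\mathcal M_T}\equiv0\).

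\emph{Tuning and verification.} I set \(\pi_T=\delta_T\) and \(a_T=\delta_T T^{-1/2}\cdot 4\). The binding constraint is the remainder estimate \(\E|g(x_t)-x_t|^q\le C\eta_t^q\) derived in the paper from Assumptions~\ref{ass:rate-linear-dynamics} and~\ref{ass:rate-trajectory-stability}. On the window it reads \(\pi_Ta_T^q\lesssim \sup_t t^{-\alpha q}e^{q\kappa_T\eta t^{1-\alpha}/(1-\alpha)}\), whose worst case is \(t\asymp T\). So the requirement is \(\delta_T\cdot\delta_T^qT^{-q/2}\lesssim T^{-\alpha q}\), which is exactly \(\delta_T^{q+1}=T^{-(\alpha-1/2)q}\). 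For Assumption~\ref{ass:rate-trajectory-stability}, I will check \(\sup_{t}\pi_Ta_T^{2q}\exp(-2q\kappa_T\eta t^{1-\alpha}/(1-\alpha))\,t^{\alpha q}<\infty\) uniformly in \(T\). The stretched exponential beats \(t^{\alpha q}\) for \(t\gg T\), and for \(t\lesssim T\) the bound follows from \(\pi_Ta_T^{2q}T^{\alpha q}\le \pi_Ta_T^qT^{\alpha q}\cdot a_T\lesssim1\). The noise moment is \(\pi_T(a_T/\eta_1)^p\to0\), so Assumption~\ref{ass:rate-martingale-noise} holds. Assumptions~\ref{ass:rate-linear-dynamics} and~\ref{ass:rate-gain} are immediate. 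Finally, take \(\Gamma=\{x:|x(1)|\ge 2\delta_T\}\); every time change fixes \(r=1\). Then \(\Pp(\Phi_T\in\Gamma)=\pi_T=\delta_T\), while \(\Gamma^{\delta}\) for \(\delta<\delta_T\) does not contain the zero path. The Prokhorov inequality therefore forces \(d_{\mathrm P}(\Phi_T,0)\ge\delta_T/2\), say.

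\emph{Main obstacle.} The delicate step is making Assumptions~\ref{ass:rate-linear-dynamics}--\ref{ass:rate-trajectory-stability} hold for \emph{all} \(t\ge1\), not only \(t\le T\), while the deviation persists over the whole horizon. This is why I use the small positive slope \(\kappa_T\) rather than a perfectly flat window. I must verify carefully that the post-\(T\) decay and the transition out of the window keep every moment constant uniform over the class. The one exception is the local curvature constant \(C_{\mathrm{sm}}=1/a_T\), which genuinely varies, as the remark preceding the theorem anticipates. If uniformity of the remainder constant over \(t\) fails near \(t\asymp T\), I would first try shrinking \(c\) in \(\kappa_T\), or shifting the injection to time \(1\) with a slightly smaller \(a_T\). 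Either change affects only the constant \(c\) in \eqref{eq:rate-nonlinear-oracle-lower}.
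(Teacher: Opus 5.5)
Your proposal is correct and follows essentially the paper's construction: a rare first-step injection of size \(\asymp T^{-\frac{(\alpha-1/2)q}{q+1}}/\sqrt T\) with probability \(\asymp T^{-\frac{(\alpha-1/2)q}{q+1}}\) and zero noise afterwards. As in the paper, the mean field is exactly linear in a shrinking neighborhood of the root, so the curvature constant \(C_{\mathrm{sm}}\) grows with \(T\), has slope \(\asymp T^{\alpha-1}\) where the iterate lives, and the test set keeps the zero path outside its enlargement; your symmetric three-point noise, choice \(G=1\), and endpoint test set \(\{x:|x(1)|\ge 2\delta_T\}\) are only cosmetic variants of the paper's asymmetric two-point noise, \(G=\lambda\) with \(\lambda\eta\le 1/4\), and sup-norm test set. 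Two small corrections, neither affecting the argument: Assumption~\ref{ass:rate-covariance-stabilization} holds only for \(\gamma\le 1/3\), not for every \(\gamma\), because the discrepancy equals \(\E[\epsilon_1^2]/m\); and the exactly linear zone of \(g\) must sit strictly inside \(a_T/2\) (e.g.\ \(|x|\le a_T/4\)), since the window must contain \(a_T/2\).
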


\begin{proof}[Proof idea]
A rare initial noise input moves the iterate away from zero to
a region where the mean field is too small to bring it back quickly.
The resulting error persists over many updates, placing the
partial-sum path at distance of order \(T^{-\rateNL}\) from the
zero path with probability of the same order. This yields the
Prokhorov lower bound.

Define \(\rho_T\coloneqq T^{-(\alpha-1/2)q/(q+1)}\).
For each \(T\), we select an oracle
\(\mathcal M_T\in\mathfrak C_{\mathrm{nl}}\) whose mean field
satisfies \(g_{\mathcal M_T}(x)=\lambda x\) below the threshold
\(b_T=\rho_T^{3/2}/\sqrt T\) and
\(g_{\mathcal M_T}(x)=c_0T^{\alpha-1}x\) on \([2b_T,1]\), with smooth
transitions. Write
\(\Delta_t^{\mathcal M_T}=x_t^{\mathcal M_T}-x^\star\).
Starting from \(x_1^{\mathcal M_T}=x^\star=0\), we choose a
centered first noise input so that the event
\(E_T=\{\Delta_2^{\mathcal M_T}=\rho_T/\sqrt T\}\)
has probability \(\rho_T\). On its complement \(E_T^c\),
the first update gives
\(\Delta_2^{\mathcal M_T}=-\rho_T^2/\{\sqrt T(1-\rho_T)\}\).
All subsequent noise increments are exactly zero.
On \(E_T\), induction gives, for some \(\kappa>0\),
\[
  \frac{\kappa\rho_T}{\sqrt T}
  \le \Delta_t^{\mathcal M_T}
  \le \frac{\rho_T}{\sqrt T},
  \qquad 2\le t\le T.
\]
Indeed, since \(\rho_T\to0\), for sufficiently large \(T\)
we have \(2b_T<\kappa\rho_T/\sqrt T\) and
\(\rho_T/\sqrt T<1\). The induction bounds therefore ensure
that \(g_{\mathcal M_T}(\Delta_t^{\mathcal M_T})
=c_0T^{\alpha-1}\Delta_t^{\mathcal M_T}\),
so the next update satisfies
\(\Delta_{t+1}^{\mathcal M_T}
=(1-c_0T^{\alpha-1}\eta_t)\Delta_t^{\mathcal M_T}\).
For sufficiently large \(T\), each factor lies in \([1/2,1]\).
Together with
\(T^{\alpha-1}\sum_{t=2}^T\eta_t=O(1)\),
this gives a positive lower bound \(\kappa\), independent of
\(T\), for all partial products, closing the induction.
On \(E_T^c\), the iterates remain negative and,
because \(g_{\mathcal M_T}(x)=\lambda x\) for \(x<0\), satisfy
\(\Delta_{t+1}^{\mathcal M_T}
=(1-\lambda\eta_t)\Delta_t^{\mathcal M_T}\),
where \(\lambda\) is chosen so that \(\lambda\eta\le1/4\).

The resulting dynamics satisfy
Assumptions~\ref{ass:rate-linear-dynamics}--\ref{ass:rate-trajectory-stability}.
Summing the positive errors on \(E_T\) gives, for a suitable
constant \(c>0\),
\[
  \mathbb P_{\mathcal M_T}
  (\Phi_T^{\mathcal M_T}\in\Gamma_T)\ge\rho_T 
  \quad \text{with} \quad
    \Gamma_T\coloneqq\{x:d_{\mathrm S}(x,0)\ge c\rho_T\}.
\]
Since the noise vanishes after the first update, the limiting
variance is zero and the target is the zero path.
This path lies outside \(\Gamma_T^\delta\) whenever
\(\delta<c\rho_T\). For \(\delta<\min\{c,1\}\rho_T\),
the target probability plus the Prokhorov tolerance \(\delta\)
is therefore smaller than the SA probability.
This yields the lower bound of order \(\rho_T\).
The complete proof is given in
\suppappendixref{app:rate-nonlinear-oracle-lower}.


\end{proof}

\subsubsection{Putting all pieces together}
\label{sec:rate-overall-lower}

In previous sections, we discussed the matching lower bounds for the
four terms in \eqref{eq:linear-sa-fclt-rate} respectively.

Theorem~\ref{thm:rate-covariance-stabilization-lower} shows that cumulative
conditional variance does not determine the local behavior of individual
paths.  Exploiting this gap, the stopped-Brownian construction produces
the oracle class \(\mathfrak C_{\mathrm{cov}}\) and the covariance lower
bound.

Theorems~\ref{thm:rate-iid-leading-minimax} and
\ref{thm:linear-gaussian-j1-sharpness} instead examine local path
oscillations.  In \(\mathfrak C_{\mathrm{osc}}\), rare large noise inputs
make the normalized noise partial-sum path locally too rough.  In the
singleton Gaussian class \(\mathfrak C_{\mathrm{sm}}\), propagation through
the SA recursion makes the full SA path locally too smooth.  These
independent but opposite deviations both conflict with the characteristic
square-root local modulus of Brownian motion.

Finally, Theorem~\ref{thm:rate-nonlinear-oracle-lower} constructs
\(\mathfrak C_{\mathrm{nl}}\) so that, conditional on a rare first input,
the iterates evolve deterministically from the second iterate onward and
remain throughout the construction horizon in a region where the mean field
differs substantially from its linearization at the root.  The resulting
persistent deviation gives the nonlinear lower bound.  We now combine the
four oracle classes to obtain an overall lower bound matching the four
polynomial exponents in Theorem~\ref{thm:linear-fclt-rate}.
Let
\[
  \mathfrak C_{\mathrm{lb}}
  =\mathfrak C_{\mathrm{cov}}\cup\mathfrak C_{\mathrm{osc}}
    \cup\mathfrak C_{\mathrm{sm}}\cup\mathfrak C_{\mathrm{nl}}.
\]
Here \(\mathfrak C_{\mathrm{cov}}\) is understood to be empty when
\(\rateCov\ge\min\{\rateOsc,\rateSm\}\), and
\(\mathfrak C_{\mathrm{osc}}\) is understood to be empty when
\(\rateOsc\ge\rateSm\).  In the first case,
\(T^{-\rateCov}\le\max\{T^{-\rateOsc},T^{-\rateSm}\}\), where the
larger rate is furnished by \(\mathfrak C_{\mathrm{osc}}\) if
\(\rateOsc<\rateSm\) and by \(\mathfrak C_{\mathrm{sm}}\) otherwise.  In
the second case,
\(T^{-\rateOsc}\le T^{-\rateSm}\le
T^{-\rateSm}\sqrt{\log T}\) for all sufficiently large \(T\).
For nonempty classes \(\mathfrak C_1,\ldots,\mathfrak C_m\), definition~\eqref{eq:rate-worst-case-path-error} directly gives
\begin{equation}
  \mathcal D_T\!\left(\bigcup_{j=1}^m\mathfrak C_j\right)
  =\max_{1\le j\le m}\mathcal D_T(\mathfrak C_j).
  \label{eq:rate-union-error}
\end{equation}
Combining the four lower bounds through \eqref{eq:rate-union-error} gives the following theorem.

\begin{theorem}[Overall lower bound for the full SA path]
\label{thm:rate-overall-lower}
There exist constants \(c>0\) and \(T_0<\infty\), independent of \(T\),
such that, for every integer \(T\ge T_0\),
\begin{equation}
  \mathcal D_T(\mathfrak C_{\mathrm{lb}})
  \ge c\left\{
    T^{-\rateCov}
    +T^{-\rateOsc}
    +T^{-\rateSm}\sqrt{\log T}
    +T^{-\rateNL}
  \right\}.
  \label{eq:rate-overall-lower}
\end{equation}
\end{theorem}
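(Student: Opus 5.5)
The plan is to derive Theorem~\ref{thm:rate-overall-lower} directly from the four mechanism-specific lower bounds, combined through the union identity \eqref{eq:rate-union-error}. First, I will reduce the four-term sum to a maximum. Since \(\max_j a_j\ge\frac14\sum_j a_j\) for nonnegative numbers, it suffices to show that
\[
  \mathcal D_T(\mathfrak C_{\mathrm{lb}})
  \ge c'\max\bigl\{T^{-\rateCov},\,T^{-\rateOsc},\,T^{-\rateSm}\sqrt{\log T},\,T^{-\rateNL}\bigr\}
\]
for all large \(T\). We may then take \(c=c'/4\).

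Next, I handle each term in turn. The nonlinear term follows from Theorem~\ref{thm:rate-nonlinear-oracle-lower}. Its class \(\mathfrak C_{\mathrm{nl}}\) has zero target, and the left side of \eqref{eq:rate-nonlinear-oracle-lower} is exactly \(\mathcal D_T(\mathfrak C_{\mathrm{nl}})\) because \(Z^{\mathcal M}\equiv0\). The smoothing term follows from Theorem~\ref{thm:linear-gaussian-j1-sharpness}, and \(\mathfrak C_{\mathrm{sm}}\) is always nonempty.

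The oscillation term requires a case split on whether \(\rateOsc<\rateSm\).
\begin{itemize}
\item If \(\rateOsc<\rateSm\), Theorem~\ref{thm:rate-iid-leading-minimax} gives \(\mathcal D_T(\mathfrak C_{\mathrm{osc}})\ge cT^{-\rateOsc}\).
\item Otherwise \(T^{-\rateOsc}\le T^{-\rateSm}\sqrt{\log T}\) for \(T\ge3\), so this term is dominated by the smoothing bound already obtained.
\end{itemize}

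The covariance term is treated the same way, with a case split on \(\rateCov<\min\{\rateOsc,\rateSm\}\).
\begin{itemize}
\item If the inequality holds, Theorem~\ref{thm:rate-covariance-stabilization-lower} applies directly.
\item Otherwise \(T^{-\rateCov}\le\max\{T^{-\rateOsc},T^{-\rateSm}\}\), and the larger of these is furnished either by \(\mathfrak C_{\mathrm{osc}}\) (when \(\rateOsc<\rateSm\)) or by \(\mathfrak C_{\mathrm{sm}}\).
\end{itemize}
In every case each term is bounded by \(C\max_j\mathcal D_T(\mathfrak C_j)\), where the maximum runs over the nonempty classes only. By \eqref{eq:rate-union-error}, that maximum equals \(\mathcal D_T(\mathfrak C_{\mathrm{lb}})\).

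Finally, I set \(T_0\) to be the maximum of the four thresholds from the component theorems and \(3\). The constants are then uniform because all exponents are fixed once \(\alpha,p,q,\gamma\) are fixed.

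The main obstacle is bookkeeping rather than analysis. One point to settle is that \eqref{eq:rate-union-error} was stated for nonempty classes, so the convention that \(\mathfrak C_{\mathrm{cov}}\) or \(\mathfrak C_{\mathrm{osc}}\) may be empty must be handled by omitting those classes from the union. The other is to confirm that the component classes are defined with the same \(\alpha,\eta,p,q\), so that \(\mathcal D_T\) refers to one common algorithm. I would check this explicitly before combining.
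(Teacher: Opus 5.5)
Your proposal is correct and is essentially the paper's proof. You use the union identity \eqref{eq:rate-union-error}, take the four component lower bounds, cover the nominal rates of the possibly empty classes \(\mathfrak C_{\mathrm{cov}}\) and \(\mathfrak C_{\mathrm{osc}}\) with the same dominance relations, and convert the maximum into the sum via \(\max\ge\frac14\sum\); your explicit handling of the thresholds (including \(T\ge3\) so that \(\sqrt{\log T}\ge1\)) is just slightly more careful bookkeeping than the paper writes out.
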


\begin{proof}
By \eqref{eq:rate-union-error} and
Theorems~\ref{thm:rate-covariance-stabilization-lower}--
\ref{thm:rate-nonlinear-oracle-lower}, for all sufficiently large \(T\),
\[
\begin{aligned}
  \mathcal D_T(\mathfrak C_{\mathrm{lb}})
  &=
  \max_{\substack{
    \mathfrak C\in\{\mathfrak C_{\mathrm{cov}},
      \mathfrak C_{\mathrm{osc}},
      \mathfrak C_{\mathrm{sm}},
      \mathfrak C_{\mathrm{nl}}\}\\
    \mathfrak C\ne\varnothing}}
  \mathcal D_T(\mathfrak C)\\
  &\gtrsim
  \max\!\left\{
    T^{-\rateCov},
    T^{-\rateOsc},
    T^{-\rateSm}\sqrt{\log T},
    T^{-\rateNL}
  \right\}\\
  &\ge
  \frac14\left\{
    T^{-\rateCov}
    +T^{-\rateOsc}
    +T^{-\rateSm}\sqrt{\log T}
    +T^{-\rateNL}
  \right\}.
\end{aligned}
\]
The middle inequality also covers the nominal rate of an empty component class
by the preceding dominance relations.  This proves
\eqref{eq:rate-overall-lower}.
\end{proof}

The covariance and finite-moment bounds match the polynomial exponents in the stated regimes, while the Gaussian bound also matches the logarithmic factor.

\section{Proofs of the Upper Bounds}
\label{sec:rate-proof-overview}

The proof of Theorem~\ref{thm:linear-fclt-rate} separates two tasks. First, we compare the SA path with its leading noise partial sum by controlling four remainder paths. We then approximate the projected noise partial sum by Brownian motion. Probability-radius bounds connect these two tasks and convert the resulting couplings into the Prokhorov bound.

\subsection{Probability and coupling tools}
\label{sec:rate-probability-coupling-tools}


For a random error $Y$ in a normed path space, we seek a small $\varepsilon$ such that $\Pp(\tnorm{Y}>\varepsilon)\le\varepsilon$. This means that the error is at most $\varepsilon$ with probability at least $1-\varepsilon$. We summarize this control through the probability radius defined by
\[
  \widetilde d(Y)
  \coloneqq\inf_{\varepsilon\ge0}\left\{\varepsilon\vee\Pp(\tnorm{Y}>\varepsilon)\right\}.
\]

\begin{lemma}[Probability-radius calculus]
\label{lem:rate-probability-radius-calculus}
If $X$ and $Y$ are real-valued c\`adl\`ag processes defined on the same
probability space, then
\begin{equation}
  d_{\mathrm P}(X,Y)
  \le \widetilde d(X-Y),
  \label{eq:rate-coupling-prokhorov}
\end{equation}
where the Prokhorov distance is induced by the \(J_1\) Skorokhod metric in
Definition~\ref{def:rate-j1-metric}.  For random elements
$Y_1,\ldots,Y_M$ in the same normed path space, with fixed $M$,
\begin{equation}
  \widetilde d\!\left(\sum_{m=1}^M Y_m\right)
  \le \sum_{m=1}^M\widetilde d(Y_m)
  \label{eq:rate-ky-fan-subadditivity}
\end{equation}
and, for integers \(d_1,d_2\ge1\), every deterministic linear map
\(L:\R^{d_1}\to\R^{d_2}\), acting pointwise on an
\(\R^{d_1}\)-valued random path \(Y\), satisfies
\begin{equation}
  \widetilde d(LY)\le(\|L\|\vee1)\widetilde d(Y).
  \label{eq:rate-ky-fan-linear-map}
\end{equation}
\end{lemma}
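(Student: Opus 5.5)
The lemma has three parts, and I would prove each directly from the definitions. Two preliminary remarks are used throughout. First, the infimum in $\widetilde d(Y)$ is attained from above. If $\varepsilon>\widetilde d(Y)$, there is some $\varepsilon_0<\varepsilon$ with $\varepsilon_0\vee\Pp(\tnorm{Y}>\varepsilon_0)<\varepsilon$. Monotonicity of $\varepsilon'\mapsto\Pp(\tnorm{Y}>\varepsilon')$ then gives $\Pp(\tnorm{Y}>\varepsilon)\le\varepsilon$. So it suffices to work with any $\varepsilon$ satisfying $\Pp(\tnorm{Y}>\varepsilon)\le\varepsilon$ and let $\varepsilon\downarrow\widetilde d(Y)$. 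Second, measurability of $\tnorm{Y}$ for càdlàg paths follows by restricting the supremum to rational times together with $r=1$; I will note this and not dwell on it.

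\textbf{Coupling bound \eqref{eq:rate-coupling-prokhorov}.} Fix $\varepsilon$ with $\Pp(\tnorm{X-Y}>\varepsilon)\le\varepsilon$ and take any $\delta>\varepsilon$. For a Borel set $\Gamma$, on the event $\{X\in\Gamma,\ \tnorm{X-Y}\le\varepsilon\}$, the bound \eqref{eq:j1-bounded-by-uniform} gives $d_{\mathrm S}(X,Y)\le\varepsilon<\delta$, so $Y\in\Gamma^\delta$. Hence
\[
\Pp(X\in\Gamma)\le\Pp(Y\in\Gamma^\delta)+\Pp(\tnorm{X-Y}>\varepsilon)\le\Pp(Y\in\Gamma^\delta)+\delta .
\]
The roles of $X$ and $Y$ are symmetric, so the same argument gives the other inequality. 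Therefore $d_{\mathrm P}(X,Y)\le\delta$. Letting $\delta\downarrow\varepsilon$ and then $\varepsilon\downarrow\widetilde d(X-Y)$ yields the claim. One small point needs care: $\Gamma^\delta$ must be Borel. It is open in the $J_1$ topology, so this holds.

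\textbf{Subadditivity \eqref{eq:rate-ky-fan-subadditivity}.} Choose $\varepsilon_m$ with $\Pp(\tnorm{Y_m}>\varepsilon_m)\le\varepsilon_m$. By the triangle inequality for $\tnorm{\cdot}$,
\[
\Bigl\{\tnorm{\textstyle\sum_m Y_m}>\sum_m\varepsilon_m\Bigr\}\subseteq\bigcup_m\{\tnorm{Y_m}>\varepsilon_m\}.
\]
A union bound then gives probability at most $\sum_m\varepsilon_m$. Hence $\widetilde d(\sum_m Y_m)\le\sum_m\varepsilon_m$, and the result follows by taking infima over each $\varepsilon_m$.

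\textbf{Linear maps \eqref{eq:rate-ky-fan-linear-map}.} Pointwise, $\tnorm{LY}\le\|L\|\,\tnorm{Y}$. Set $K=\|L\|\vee1$ and suppose $\Pp(\tnorm{Y}>\varepsilon)\le\varepsilon$. Then
\[
\Pp(\tnorm{LY}>K\varepsilon)\le\Pp(\tnorm{Y}>\varepsilon)\le\varepsilon\le K\varepsilon .
\]
So $\widetilde d(LY)\le K\varepsilon$, and taking the infimum over $\varepsilon$ gives the bound. The factor $\vee1$ is exactly what is needed to absorb the probability term when $\|L\|<1$.

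None of these steps is a real obstacle. The only delicate points are handling the infimum in $\widetilde d$ (dealt with by the monotonicity remark above) and confirming that $\delta$-enlargements and $\tnorm{\cdot}$ are measurable in $D([0,1],\R)$.
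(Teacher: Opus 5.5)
Your proposal is correct and follows the paper's proof essentially step by step. It uses the same monotonicity remark about the infimum in $\widetilde d$, the same triangle-inequality and union-bound argument for subadditivity, and the same scaling argument with $\|L\|\vee1$. The one small difference is in the coupling bound: you verify the two Prokhorov inequalities directly, whereas the paper simply invokes the (easy direction of the) coupling characterization of the Prokhorov distance.
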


\begin{proof}
The proof is given in
\suppappendixref{app:proof-rate-probability-radius-calculus}.
\end{proof}

The next inequality controls weighted noise sums. Its two terms retain the distinct effects of Gaussian fluctuations and finite-moment tails.
\begin{lemma}[Maximal martingale Fuk--Nagaev inequality]
\label{lem:rate-fuk-nagaev}
Let $(D_i,\cF_i)_{i=1}^n$ be a real-valued martingale-difference sequence,
let $p>2$, and suppose that there are deterministic constants
$V_n,L_{p,n}<\infty$ such that, almost surely,
\begin{equation*}
  \sum_{i=1}^n
  \E\!\left[D_i^2\mid\cF_{i-1}\right]\le V_n,
  \qquad
  \sum_{i=1}^n
  \E\!\left[|D_i|^p\mid\cF_{i-1}\right]\le L_{p,n}.
\end{equation*}
Then, for every $x>0$,
\begin{equation}
  \Pp\!\left(
    \max_{k\le n}\left|\sum_{i=1}^kD_i\right|>x
  \right)
  \le C_p\exp(-c_px^2/V_n)+C_pL_{p,n}x^{-p}.
  \label{eq:rate-fuk-nagaev}
\end{equation}
Here $C_p,c_p>0$ depend only on $p$, and the exponential term is
interpreted as zero when $V_n=0$.
\end{lemma}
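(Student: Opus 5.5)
The plan is to follow Nagaev's truncation argument for sums of independent variables, adapted to martingales through an exponential supermartingale and Doob's maximal inequality. This is the route behind the maximal inequality of \cite{fan2017deviation}, which the paper already invokes; the cleanest presentation may simply verify that its hypotheses hold. It suffices to treat the one-sided event \(\max_{k\le n}\sum_{i\le k}D_i>x\) and then apply the same bound to \(-D_i\). The case \(V_n=0\) is trivial: then \(D_i=0\) almost surely and the left side vanishes. Fix \(x>0\) and a truncation level \(y=x/K\), where \(K=K(p)\) is a constant to be chosen.

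\emph{Step 1: truncation.} Split \(D_i=D_i'+D_i''\), where
\[
  D_i'\coloneqq D_i\mathbf 1\{|D_i|\le y\}-\E[D_i\mathbf 1\{|D_i|\le y\}\mid\cF_{i-1}].
\]
The large part \(D_i''\) is again a martingale difference, since \(\E[D_i\mid\cF_{i-1}]=0\). On the event that no \(|D_i|\) exceeds \(y\), the uncentered large part vanishes. Its compensator is also small:
\[
  \sum_i\bigl|\E[D_i\mathbf 1\{|D_i|>y\}\mid\cF_{i-1}]\bigr|\le L_{p,n}y^{1-p}.
\]
This compensator is at most \(x/4\) unless \(L_{p,n}x^{-p}\gtrsim K^{1-p}\). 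In that exceptional regime the claimed bound is trivial once \(C_p\) is large.

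\emph{Step 2: the large jumps.} The event \(\{\exists i:|D_i|>y\}\) has probability at most \(\sum_i\E\,\Pp(|D_i|>y\mid\cF_{i-1})\le L_{p,n}y^{-p}=K^pL_{p,n}x^{-p}\). This term is of the required form \(C_pL_{p,n}x^{-p}\).

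\emph{Step 3: the truncated martingale.} The increments \(D_i'\) are bounded by \(2y\), and their conditional variances sum to at most \(V_n\). The process \(\exp\bigl(\lambda\sum_{i\le k}D_i'-\sum_{i\le k}\E[e^{\lambda D_i'}-1-\lambda D_i'\mid\cF_{i-1}]\bigr)\) is a supermartingale, so Doob's inequality applies to its running maximum. For \(\E[e^{\lambda D'}-1-\lambda D'\mid\cF]\) I will use Nagaev's split rather than plain Bennett. The region \(|D|\le y/K'\) contributes \(\lesssim\lambda^2\E[D^2\mid\cF]\) when \(\lambda y/K'\le1\). The region \(y/K'<|D|\le y\) contributes at most \(e^{\lambda y}(K'/y)^{p}\E[|D|^p\mid\cF]\). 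Taking \(\lambda\asymp x/V_n\) when \(xy\lesssim V_n\), and \(\lambda\asymp y^{-1}\log(1+xy/V_n)\) otherwise, gives a bound \(\exp(-cx^2/V_n)\) in the first regime. In the second regime the bound has the form \((V_n/(xy))^{cK}+CL_{p,n}x^{-p}\). In the latter regime, the factor \((V_n/(xy))^{cK}=(KV_n/x^2)^{cK}\) is bounded by \(e^{-c'x^2/V_n}\) only when \(x^2/V_n\) is moderate. When \(x^2/V_n\) is large it must instead be absorbed into the \(L_{p,n}x^{-p}\) term, using the \(L^p\) part of the split with \(K\) chosen larger than \(p\).

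\emph{Main obstacle.} The hard step is this final absorption in Step 3. Plain Freedman/Bennett bounds leave a polynomial term in \(V_n/x^2\) rather than in \(L_{p,n}\), and Lyapunov's inequality does not control \(V_n^{p/2}\) by \(L_{p,n}\). The \(L^p\)-refined bound on the cumulant, as in Nagaev's proof and in \cite{fan2017deviation}, is therefore essential. If a self-contained verification becomes too delicate, I will instead check that the hypotheses of \cite{fan2017deviation}, with almost sure bounds on both predictable sums, are exactly those stated, and quote its conclusion.
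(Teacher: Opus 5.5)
Your fallback is exactly the paper's proof. The paper quotes the one-sided maximal bound of Corollary~2.5 in \cite{fan2017deviation}, with the predictable variance and $p$th-moment sums bounded by $V_n$ and $L_{p,n}$. It then applies that bound to $-D_i$, takes a union bound, and notes that $V_n=0$ forces every $D_i=0$. Your symmetrization and degenerate case match this, so quoting the corollary closes the proof.

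The self-contained route, however, fails at the step you flag, and the repair you propose does not work. With the split fixed at $y/K'$ and $\lambda\asymp y^{-1}\log(1+xy/V_n)$, $\lambda$ never depends on $L_{p,n}$. The residual $(KV_n/x^2)^{cK}$ is then only polynomial in $R\coloneqq x^2/V_n$, and no choice of $K$ lets it be absorbed into $C_p(e^{-c_pR}+L_{p,n}x^{-p})$. To see this, take $n$ independent increments $\pm\sigma$. Then $V_n=n\sigma^2$, $L_{p,n}=n\sigma^p$, and $L_{p,n}x^{-p}=n^{1-p/2}R^{-p/2}$. Fixing a large $R$ and letting $n\to\infty$ leaves a target of order $e^{-c_pR}\ll R^{-cK}$.

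The missing idea is to let the split level move with $\lambda$ and to choose $\lambda$ using $L_{p,n}$. Truncate one-sidedly, $\bar D_i=\min(D_i,y)$ with $y=x/K$. Then $\E[\bar D_i\mid\cF_{i-1}]\le0$ and no centering is needed. Splitting at $1/\lambda$ and writing $\phi(u)=e^u-1-u$ gives
\[
  \sum_{i=1}^n\E\!\left[\phi(\lambda\bar D_i)\mid\cF_{i-1}\right]
  \le\frac e2\lambda^2V_n+e^{\lambda y}\lambda^pL_{p,n}.
\]
Take $\lambda=s/y$ with $s=\min\{x^2/(eKV_n),s^*\}$, where $e^{s^*}(s^*)^p=y^p/L_{p,n}$. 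Then the last term is at most $1$ and the quadratic term is at most $sK/2$. The exponential supermartingale and Ville's inequality therefore give $\Pp(\max_k\sum_{i\le k}\bar D_i\ge x)\le e^{1-sK/2}$. In the first case this equals $e^{1-x^2/(2eV_n)}$. In the second case, for $K\ge4$, it is at most $e(p/e)^pK^pL_{p,n}x^{-p}$, because $(s^*)^pe^{-s^*}\le(p/e)^p$. Adding $\Pp(\max_iD_i>y)\le K^pL_{p,n}x^{-p}$ yields the one-sided bound without citing the corollary.
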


\begin{proof}
This two-sided form follows from the one-sided maximal martingale inequality
in \cite[Corollary~2.5]{fan2017deviation}.  The short derivation is given in
\suppappendixref{app:proof-rate-fuk-nagaev}.
\end{proof}

\begin{lemma}[Two-regime tail-to-radius conversion]
\label{lem:rate-tail-to-radius}
Let $p>0$, $A_T\ge0$, $N_T\ge1$, and $0<s_T\le1$.  Suppose that,
for fixed constants $C_0,c_0>0$, a random path $Y_T$ satisfies
\begin{equation}
  \Pp(\tnorm{Y_T}>x)
  \le C_0A_Tx^{-p}+C_0N_T\exp(-c_0x^2/s_T^2),
  \qquad x>0.
  \label{eq:rate-two-regime-tail}
\end{equation}
Then
\begin{equation}
  \widetilde d(Y_T)
  \le C\left\{
    A_T^{1/(p+1)}
    +s_T\sqrt{\log(eN_T/s_T)}
  \right\},
  \label{eq:rate-tail-to-ky-fan}
\end{equation}
where $C$ depends only on $p,C_0,c_0$.
\end{lemma}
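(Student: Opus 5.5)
The bound is proved directly from the definition of $\widetilde d$: we exhibit one threshold $\varepsilon$ at which both $\varepsilon$ and $\Pp(\tnorm{Y_T}>\varepsilon)$ are at most a constant times the right-hand side of \eqref{eq:rate-tail-to-ky-fan}. Because of \eqref{eq:rate-two-regime-tail}, the two tail terms can be treated separately. We take $\varepsilon=\varepsilon_1\vee\varepsilon_2$, where $\varepsilon_1$ handles the polynomial regime and $\varepsilon_2$ the Gaussian regime.

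\emph{Polynomial regime.} Put $\varepsilon_1\coloneqq A_T^{1/(p+1)}$. For $x\ge\varepsilon_1$,
\[
C_0A_Tx^{-p}\le C_0A_T\varepsilon_1^{-p}=C_0A_T^{1/(p+1)}.
\]
The case $A_T=0$ is trivial, since then this term vanishes.

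\emph{Gaussian regime.} Put $\varepsilon_2\coloneqq s_T\sqrt{(2/c_0)\log(eN_T/s_T)}$. Then
\[
\exp(-c_0\varepsilon_2^2/s_T^2)=(s_T/(eN_T))^{2},
\]
so for $x\ge\varepsilon_2$,
\[
C_0N_T\exp(-c_0x^2/s_T^2)\le C_0s_T^2/(e^2N_T)\le C_0s_T.
\]
Here we used $N_T\ge1$ and $s_T\le1$. We also need $C_0s_T\lesssim\varepsilon_2$. This holds because $\log(eN_T/s_T)\ge1$ under the same constraints, so $s_T\le\sqrt{c_0/2}\,\varepsilon_2$.

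\emph{Combining.} Since the tail bound is nonincreasing in the threshold, at $x=\varepsilon$ we get
\[
\Pp(\tnorm{Y_T}>\varepsilon)\le C_0A_T^{1/(p+1)}+C_0s_T\le C\{A_T^{1/(p+1)}+s_T\sqrt{\log(eN_T/s_T)}\}.
\]
Moreover $\varepsilon\le\varepsilon_1+\varepsilon_2$ is of the same form. Taking the maximum of $\varepsilon$ and this probability gives \eqref{eq:rate-tail-to-ky-fan}, with $C$ depending only on $p,C_0,c_0$.

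There is no real obstacle here. The only point needing care is choosing the logarithm's argument so that the Gaussian tail probability is bounded by the radius $s_T\sqrt{\log(\cdot)}$ itself, not merely by some small quantity. This is why $s_T$ appears inside the logarithm: it makes the residual probability $O(s_T)$ rather than $O(1)$, which would not suffice when $N_T$ is large.
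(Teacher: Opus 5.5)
Your proof is correct and is essentially the paper's argument: both evaluate the tail bound at a single threshold of the order of the right-hand side and control the polynomial and Gaussian terms there. The paper inflates the threshold by a large constant $K$ so that $\Pp(\tnorm{Y_T}>x)\le x$ holds exactly (after disposing of the trivial case $x\ge1$), whereas you bound the threshold and the tail separately by constant multiples of the target, which the inf-of-max definition of $\widetilde d$ already permits; one small correction to your closing remark is that without $s_T$ inside the logarithm the residual would be of order $1/N_T$, which fails when $N_T$ stays bounded (and $s_T\to0$), not when $N_T$ is large.
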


\begin{proof}
The proof is given in \suppappendixref{app:proof-rate-tail-to-radius}.
\end{proof}

For martingale terms whose available moment order is at most two, the following
bound replaces the two-regime inequality.  It will control the centered
fluctuations of the embedding-time increments in the Brownian representation.
\begin{lemma}[Subquadratic BDG maximal inequality]
\label{lem:rate-subquadratic-bdg}
Let $(D_j,\mathcal G_j)_{j=1}^n$ be a real martingale-difference sequence
such that \(\E[|D_j|^\nu]<\infty\) for some \(1<\nu\le2\).  Then
\begin{equation*}
  \E\!\left[\max_{k\le n}\left|\sum_{j=1}^kD_j\right|^\nu\right]
  \le C_\nu\E\!\left[\left(\sum_{j=1}^nD_j^2\right)^{\nu/2}\right]
  \le C_\nu\sum_{j=1}^n\E\!\left[|D_j|^\nu\right],
\end{equation*}
where \(C_\nu<\infty\) depends only on \(\nu\).
\end{lemma}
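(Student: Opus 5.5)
The plan is to prove the first inequality through the Burkholder--Davis--Gundy inequality and the second through the subadditivity of the map \(u\mapsto u^{\nu/2}\) for \(\nu/2\le1\).

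\textbf{First inequality.} Set \(S_k=\sum_{j\le k}D_j\); it is a \(\mathcal G_j\)-martingale with \(S_0=0\).

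\begin{itemize}
\item I first check that \(\E[|D_j|^\nu]<\infty\) with \(\nu>1\) makes each \(S_k\) integrable. This justifies the martingale property and makes \(\E[(\sum_j D_j^2)^{\nu/2}]\) finite; the latter finiteness is actually a by-product of the second inequality.
\item I then invoke the discrete-time BDG inequality \cite{hall2014martingale}. For any \(\nu\ge1\) it gives \(\E[\max_{k\le n}|S_k|^\nu]\le C_\nu\E[[S]_n^{\nu/2}]\), where \([S]_n=\sum_{j=1}^nD_j^2\) is the square function.
\item In the regime \(1<\nu\le2\) this can also be obtained more elementarily. Doob's \(L^\nu\) maximal inequality gives \(\E[\max_k|S_k|^\nu]\le(\nu/(\nu-1))^\nu\E[|S_n|^\nu]\). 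Burkholder's square-function inequality, valid for \(1<\nu<\infty\), then gives \(\E|S_n|^\nu\le C_\nu\E[[S]_n^{\nu/2}]\).
\end{itemize}

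Either route yields the first inequality with a constant depending only on \(\nu\). Since \(\nu>1\) strictly, Doob's inequality applies and there is no need for the Davis (\(\nu=1\)) case.

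\textbf{Second inequality.} Put \(r=\nu/2\in(1/2,1]\). For nonnegative reals \(a_j\) the inequality \((\sum_j a_j)^r\le\sum_j a_j^r\) holds. When \(r=1\) it is an equality. When \(r<1\), the function \(u\mapsto u^r\) is concave and vanishes at \(0\), hence subadditive: \((a+b)^r\le a^r+b^r\), and induction extends this to finitely many terms.

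Applying this with \(a_j=D_j^2\) gives \((\sum_j D_j^2)^{\nu/2}\le\sum_j|D_j|^\nu\) pointwise. Taking expectations yields the second inequality with the same constant \(C_\nu\), and it also confirms that the middle quantity is finite.

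\textbf{Main obstacle.} The only nontrivial ingredient is the BDG/Burkholder square-function bound for \(\nu\in(1,2]\), which I cite rather than reprove. If a self-contained argument were required, I would use the good-\(\lambda\) or Davis decomposition. A shortcut via the \(L^2\) isometry plus interpolation does not directly give \(\nu<2\) without such machinery, so citing the classical result is the intended route.
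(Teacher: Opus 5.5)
Your proposal is correct and follows the paper's proof: the first inequality is cited as the discrete-time Burkholder--Davis--Gundy maximal inequality, and the second comes from the pointwise subadditivity $(\sum_j a_j)^{\nu/2}\le\sum_j a_j^{\nu/2}$ for $a_j\ge0$ and $\nu/2\le1$. Your alternative route (Doob's $L^\nu$ maximal inequality followed by Burkholder's square-function inequality) is a valid unpacking of the same cited result for $\nu>1$, but the paper does not need it.
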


\begin{proof}
The first inequality is the discrete-time Burkholder--Davis--Gundy maximal
inequality. See \cite{burkholder1988sharp}.  The second follows from
\((\sum_j a_j)^{\nu/2}\le\sum_j a_j^{\nu/2}\) for \(a_j\ge0\).
\end{proof}

To approximate the leading noise partial sum, we represent it as Brownian
motion observed at stopping times.  The next embedding matches the conditional
mean of each embedding-time increment to the corresponding conditional variance.
\begin{lemma}[Scott--Huggins martingale embedding]
\label{lem:rate-scott-huggins-embedding}
Let $(Y_j,\cF_j)_{j\ge1}$ be a real square-integrable
martingale-difference sequence and put $S_k=\sum_{j=1}^kY_j$.  On a complete
extension of the probability space, there exist a Brownian motion
\((W(u),\mathcal H_u)_{u\ge0}\), increasing
\((\mathcal H_u)\)-stopping times
\(0=\tau_0\le\tau_1\le\cdots\), and an increasing family
\((\mathcal G_j)_{j\ge0}\) of \(\sigma\)-fields such that \(\mathcal F_j\subseteq\mathcal G_j\), each \(\tau_j\) is \(\mathcal G_j\)-measurable, and
\begin{equation}
  W(\tau_k)=S_k,
  \qquad
  \E\!\left[\rho_j\mid\mathcal G_{j-1}\right]
  =\E\!\left[Y_j^2\mid\cF_{j-1}\right],
  \qquad \rho_j\coloneqq\tau_j-\tau_{j-1}.
  \label{eq:rate-scott-huggins-time-change}
\end{equation}
In particular, \(\rho_j\) is \(\mathcal G_j\)-measurable.
Moreover, for every $r>1$ there are constants $0<c_r<C_r<\infty$,
depending only on $r$, such that
\begin{equation}
  c_r\E\!\left[\rho_j^{r/2}\mid\mathcal G_{j-1}\right]
  \le \E\!\left[|Y_j|^r\mid\cF_{j-1}\right]
  \le C_r\E\!\left[\rho_j^{r/2}\mid\mathcal G_{j-1}\right]
  \quad\text{almost surely}.
  \label{eq:rate-scott-huggins-moments}
\end{equation}
The embedding retains the original $\cF_j$ inside the enlarged stopped
filtration, so the conditional quantities in
\eqref{eq:rate-scott-huggins-time-change}--\eqref{eq:rate-scott-huggins-moments}
are the original ones.
\end{lemma}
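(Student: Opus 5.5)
The plan is the classical sequential Skorokhod embedding, run conditionally on the past, as in Hall--Heyde's Appendix~A.1 / Scott's construction. The new ingredient is the bookkeeping that makes $\cF_j\subseteq\mathcal G_j$ compatible with the conditional identities. Throughout I use a randomized two-point (Hall) embedding. Under it, the stopped Brownian piece is bounded between the two random levels $U<0<V$ that it exits, so it is uniformly integrable. I also use the following one-step fact: if $B$ is a Brownian motion, $\mu$ a centered law with finite second moment, and $\sigma$ the two-point randomized embedding of $\mu$, then $B(\sigma)\sim\mu$ and $\E\sigma=\int y^2\mu(dy)$. For $r>1$ there are two-sided bounds
\[
c_r\E\sigma^{r/2}\le \E|B(\sigma)|^r\le C_r\E\sigma^{r/2}.
\]
The upper bound follows from the BDG inequality applied to the stopped martingale $B(\cdot\wedge\sigma)$ together with $|B(\sigma)|\le\sup_{u\le\sigma}|B(u)|$. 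The lower bound uses BDG in the other direction and Doob's $L^r$ maximal inequality $\E\sup_{u\le\sigma}|B(u)|^r\le C_r\E|B(\sigma)|^r$. That inequality is valid because $B(\cdot\wedge\sigma)$ is a uniformly integrable martingale, being bounded by $\max(|U|,V)$.

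\textbf{Construction.} Extend the space by an i.i.d.\ sequence of uniforms and independent Brownian motions $(B_j)_{j\ge1}$, all independent of $(\xi_t)$. I proceed inductively in $j$ and keep the invariant that, given $\cF_{j-1}$, the auxiliary information $\mathcal G_{j-1}$ is conditionally independent of $(Y_j,Y_{j+1},\ldots)$. Fix a regular conditional law $\mu_j(\omega,\cdot)$ of $Y_j$ given $\cF_{j-1}$; it is centered and square-integrable a.s. Let $P_\mu$ denote the joint law of (exit levels, $\sigma$, path $B|_{[0,\sigma]}$) under the two-point embedding of $\mu$. I do not run the embedding forward, since that would not reproduce the actual $Y_j$. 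Instead I sample the path segment $\Pi_j$ from the \emph{conditional} law of $B|_{[0,\sigma]}$ given the endpoint $B(\sigma)=Y_j$ under $P_{\mu_j}$. This uses a measurable kernel, a fresh uniform, and the realized $Y_j$. Then $(Y_j,\Pi_j)$ given $\cF_{j-1}$ has exactly the law $P_{\mu_j}$, with $Y_j$ as the terminal value. Set $\rho_j=\sigma(\Pi_j)$, $\tau_j=\tau_{j-1}+\rho_j$, and concatenate $W(\tau_{j-1}+u)=S_{j-1}+\Pi_j(u)$ for $u\le\rho_j$. After $\tau_\infty=\lim_j\tau_j$ (if finite), continue with an independent Brownian motion. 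Set $\mathcal G_j=\sigma(\cF_j,\Pi_1,\ldots,\Pi_j)$ and $\mathcal H_u=\sigma(\mathcal G_j\cap\{\tau_j\le u\}\ \text{etc.}, W(s),s\le u)$, that is, the natural filtration of $W$ enlarged by the $\mathcal G$'s, suitably completed.

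\textbf{Verification.} First, $W(\tau_k)=S_k$ holds by construction, and $\tau_j,\rho_j$ are $\mathcal G_j$-measurable. Second, the conditional identities hold. Since $\Pi_j$ is generated from $(Y_j,\cF_{j-1},\text{fresh uniform})$ only, the invariant propagates. Hence conditioning on $\mathcal G_{j-1}$ equals conditioning on $\cF_{j-1}$ for any functional of $(Y_j,\Pi_j)$. Therefore
\[
\E[\rho_j\mid\mathcal G_{j-1}]=\int y^2\mu_j(dy)=\E[Y_j^2\mid\cF_{j-1}],
\]
and the $r$-moment comparison \eqref{eq:rate-scott-huggins-moments} follows from the one-step fact applied with $\mu=\mu_j$. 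Third, $W$ is a Brownian motion for $(\mathcal H_u)$ with $\tau_j$ stopping times. Given $\mathcal G_{j-1}$, the segment $\Pi_j$ is distributed as a Brownian motion stopped at an embedding time that is randomized independently of its future. The strong Markov property of the concatenation then gives independent Gaussian increments across segments. The remaining Brownian-motion property is routine.

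\textbf{Main obstacle.} I expect the real difficulty to be this third step together with the filtration bookkeeping. One must make precise that the randomized stopping time is a stopping time for a filtration in which the path is still Brownian. I would handle this by placing the randomization (the exit levels $U,V$) at time $\tau_{j-1}$ into $\mathcal H_{\tau_{j-1}}$, independent of the future Brownian increments. One must also check that the backward (bridge-type) sampling of $\Pi_j$ from $Y_j$ does not leak information about $Y_{j+1},\dots$ into $\mathcal G_j$ beyond $\cF_j$. This is exactly the conditional-independence invariant, proved by induction using that the fresh uniforms are independent of $(\xi_t)$.
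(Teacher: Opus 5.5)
Your proposal is correct, but it takes a genuinely different route from the paper. The paper constructs nothing: it cites Theorem~1 and equations~(2.1)--(2.2) of Scott and Huggins, and notes that the enlargement of the space preserves the law of $(S_k)$.

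You instead build the embedding directly:
\begin{itemize}
\item you run Hall's randomized two-point embedding conditionally, through the regular conditional laws $\mu_j$;
\item you sample each path segment backward from the realized endpoint $Y_j$, using fresh independent randomness;
\item you get the two-sided moment comparison from BDG for the upper bound (valid for every $r>0$), and from Doob's $L^r$ inequality, applied conditionally on the exit levels, for the lower bound. This is exactly where $r>1$ is needed.
\end{itemize}

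What your route buys is an explicit account of a point the citation leaves implicit. First, $W(\tau_k)=S_k$ holds pathwise on a genuine product extension, not just in law. Second, the conditional quantities are the original ones for a transparent reason: $\mathcal G_{j-1}\subseteq\cF_{j-1}\vee\sigma(\text{auxiliary variables})$, and the auxiliary variables are independent of $(\xi_t)$. Hence conditioning on $\mathcal G_{j-1}$ and conditioning on $\cF_{j-1}$ agree for functionals of $Y_j$ and fresh randomness.

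The price is the bookkeeping you defer:
\begin{itemize}
\item You need a disintegration kernel that is jointly measurable in $(\mu,y)$, i.e.\ a parametrized disintegration on Polish spaces.
\item The Brownian property is best proved by backward induction over finitely many segments. The inductive step uses that a Brownian motion stopped at the exit time of independently randomized levels, followed by an independent Brownian motion, is again a Brownian motion. A limit then handles infinitely many segments, and the same argument gives continuity of $W$ at $\tau_\infty$ when $\tau_\infty<\infty$.
\end{itemize}

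One point to respect in that bookkeeping concerns where the exit levels $(U_j,V_j)$ live. They may enter $\mathcal H_{\tau_{j-1}}$, as you propose, and also $\mathcal G_j$. They must not enter $\mathcal G_{j-1}$: otherwise $\E[\rho_j\mid\mathcal G_{j-1}]=|U_j|V_j$ rather than $\E[Y_j^2\mid\cF_{j-1}]$. Consequently $\mathcal G_{j-1}$ is strictly smaller than $\mathcal H_{\tau_{j-1}}$, which the lemma permits.

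Finally, the downstream use in Lemma~\ref{lem:rate-leading-mds-coupling} needs only three things: the law of $W$, the conditional identities, and the $\mathcal G_j$-measurability of $\rho_j$. So the filtration $(\mathcal H_u)$ is not load-bearing there.
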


\begin{proof}
The source and filtration identification are recorded in
\suppappendixref{app:proof-rate-scott-huggins-embedding}.
\end{proof}

Finally, we give a standard concentration property of Brownian motion.
\begin{lemma}[Brownian tail bound]
\label{lem:rate-brownian-modulus}
Let \(W\) be standard Brownian motion and let \(H\in(0,\infty)\).  There is
a constant \(C_H<\infty\), depending only on \(H\), such that, for every
\(0<h\le1\) and \(u>0\),
\begin{equation*}
  \Pp\!\left(
    \sup_{\substack{s,t\in[0,H]\\|s-t|\le h}}
    |W(t)-W(s)|>u
  \right)
  \le C_Hh^{-1}\exp\{-u^2/(C_Hh)\}.
\end{equation*}
\end{lemma}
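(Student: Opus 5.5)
The plan is to discretize the modulus of continuity onto a grid and then apply a Gaussian union bound, with reflection-principle control inside each block. The bound is immediate once $u^2\le C_H h$, since then $C_Hh^{-1}\exp\{-u^2/(C_Hh)\}\ge h^{-1}e^{-1}\ge e^{-1}$, and enlarging $C_H$ makes the right side at least one. So we may assume $u^2>C_Hh$.

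Partition $[0,H]$ into the grid points $t_k=kh$, $k=0,\dots,K$, with $K=\lceil H/h\rceil\le (H+1)/h$. Any pair $s,t\in[0,H]$ with $|s-t|\le h$ lies in some window $[t_k,t_{k+2}]$. Hence
\[
|W(t)-W(s)|\le 2\sup_{r\in[t_k,t_k+2h]}|W(r)-W(t_k)|.
\]
It follows that the event in question is contained in the union, over $k\le K$, of the events
\[
\Bigl\{\sup_{0\le r\le 2h}|W(t_k+r)-W(t_k)|>u/2\Bigr\}.
\]

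By the Markov property, each increment process $W(t_k+\cdot)-W(t_k)$ is a standard Brownian motion. The reflection principle applied to both tails then gives
\[
\Pp\Bigl(\sup_{r\le 2h}|B(r)|>u/2\Bigr)\le 4\,\Pp\bigl(B(2h)>u/2\bigr)\le 4\exp\{-u^2/(16h)\}.
\]
A union bound over the at most $(H+1)/h+1\le (H+2)/h$ windows (using $h\le1$) yields
\[
\Pp\Bigl(\sup_{\substack{s,t\in[0,H]\\|s-t|\le h}}|W(t)-W(s)|>u\Bigr)\le 4(H+2)h^{-1}\exp\{-u^2/(16h)\}.
\]
Taking $C_H=\max\{16,\,4(H+2),\,e\}$ gives the stated form uniformly in $0<h\le1$ and $u>0$.

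The only point requiring care is the covering step. Every admissible pair must fall in a single window of length $2h$ starting at a grid point, and this is where the factor $2$ in the window length and the halving of $u$ come from. Everything else is standard, so I do not expect a genuine obstacle.
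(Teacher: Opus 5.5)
Your proposal is correct and follows essentially the same route as the paper: cover every admissible pair by a double block $[kh,kh+2h]$, use the triangle inequality to reduce to $\sup_{0\le r\le 2h}|W(kh+r)-W(kh)|>u/2$, apply the reflection principle to get $4\exp\{-u^2/(16h)\}$, and take a union bound over $O((H+1)/h)$ blocks. Two minor differences: your preliminary reduction to $u^2>C_Hh$ is never used and can be dropped, and counting $K+1$ windows plays the role of the paper's remark that $s=H$ forces $s=t$ and hence a zero increment.
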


\begin{proof}
The proof is given in \suppappendixref{app:proof-rate-brownian-modulus}.
\end{proof}

\subsection{Basic decomposition and term-wise analysis}
In the rest of the paper, we set
\begin{equation}
  \bB_t\coloneqq \bI-\eta_t\bG,
  \qquad
  \bA_j^n\coloneqq\eta_j\sum_{k=j}^n\prod_{i=j+1}^k\bB_i,
  \qquad n\ge j\ge1,
  \label{eq:rate-recursion-coefficients}
\end{equation}
where an empty product is \(\bI\).  Because every \(\bB_i\) is a polynomial in
the same matrix \(\bG\), the factors commute.

Retain $\bDelta_t=\bx_t-\bx^\star$ and set $\br_t=g(\bx_t)-\bG\bDelta_t$.
The update and the definition of \(\bepsilon_t\) in
Assumption~\ref{ass:rate-martingale-noise} give the exact recursion
\begin{equation*}
  \bDelta_{t+1}=\bB_t\bDelta_t+\eta_t\bepsilon_t-\eta_t\br_t.
\end{equation*}
By the definition in \eqref{eq:partial-sum-process},
\(\bPhi_T(n/T)=T^{-1/2}\sum_{t=1}^n\bDelta_{t+1}\).
Direct iteration gives
\begin{equation*}
  \bDelta_{t+1}=\left(\prod_{i=1}^t\bB_i\right)\bDelta_1
  +\sum_{j=1}^t\left(\prod_{i=j+1}^t\bB_i\right)
    \eta_j(\bepsilon_j-\br_j).
\end{equation*}
Summing over $1\le t\le n$, exchanging the finite sums, and using \eqref{eq:rate-recursion-coefficients} yields
\begin{equation*}
  \sum_{t=1}^n\bDelta_{t+1}
  =
  \frac{\bA_1^n(\bI-\eta_1\bG)\bDelta_1}{\eta_1}
  +\sum_{j=1}^n\bA_j^n\bepsilon_j
  -\sum_{j=1}^n\bA_j^n\br_j.
\end{equation*}
Consequently, at every grid point with $1\le n\le T$,
\begin{equation}
\begin{aligned}
  \bPhi_T(n/T)-\frac1{\sqrt T}\sum_{j=1}^n\bG^{-1}\bepsilon_j
  &=\underbrace{\frac{\bA_1^n(\bI-\eta_1\bG)\bDelta_1}
  {\sqrt T\eta_1}}_{
    \eqqcolon \bpsi_{0,T}(n)
  }+\underbrace{\left(-\frac1{\sqrt T}\sum_{j=1}^n\bA_j^n\br_j\right)}_{
    \eqqcolon\bpsi_{1,T}(n)}\\
    &\quad +\underbrace{
      \frac1{\sqrt T}\sum_{j=1}^n(\bA_j^T-\bG^{-1})\bepsilon_j
    }_{\eqqcolon \bpsi_{2,T}(n)}
    +\underbrace{\frac1{\sqrt T}\sum_{j=1}^n
  (\bA_j^n-\bA_j^T)\bepsilon_j}_{
    \eqqcolon \bpsi_{3,T}(n)}.
\end{aligned}
  \label{eq:rate-four-remainder-decomposition}
\end{equation}
Set $\bpsi_{i,T}(0)=0$ for $i\in\{0,1,2,3\}$, and henceforth use
$\bpsi_{i,T}$ for the path obtained by linear interpolation of
$\bpsi_{i,T}(n)$ placed at times $n/T$. In grid formulas, the integer argument $n$ denotes this discrete value. In path-space expressions, the argument is continuous time in $[0,1]$.  Convexity of the norm on
each grid interval gives the useful identity
\begin{equation}
  \tnorm{\bpsi_{i,T}}
  =\max_{0\le n\le T}\norm{\bpsi_{i,T}(n)},
  \qquad i\in\{0,1,2,3\}.
  \label{eq:intpl_to_grid}
\end{equation}
The terms $\bpsi_{0,T}$ and $\bpsi_{1,T}$ describe initialization and nonlinearity. The two remaining paths split the algorithmic smoothing error: fixing the endpoint at $T$ makes $\bpsi_{2,T}$ a martingale partial sum, while $\bpsi_{3,T}$ contains the varying-endpoint correction. We control these four paths in that order.

\subsubsection*{Step 1: Initialization bias}

The term \(\bpsi_{0,T}\) is the initialization bias.  The next two
deterministic lemmas provide the step-size comparisons and the uniform
coefficient control used throughout the remainder analysis.
\begin{lemma}[Power-law contraction-sum estimates]
\label{lem:rate-power-law-contraction-sums}
Let $\eta_t=\eta t^{-\alpha}$ with $\eta>0$ and $\alpha\in(0,1)$.  For
every $a,b>0$, there are constants $C_a<\infty$ and $j_b<\infty$ such that
\begin{equation}
  \sum_{k=j}^{\infty}
  \exp\left(-a\sum_{i=j+1}^k\eta_i\right)
  \le C_a\eta_j^{-1},
  \qquad
  \sup_{n\ge1}\sum_{j=1}^n\eta_j
  \exp\left(-a\sum_{i=j+1}^n\eta_i\right)
  \le C_a.
  \label{eq:rate-contraction-sums}
\end{equation}
Moreover,
\begin{equation}
  \frac{\eta_j}{\eta_k}
  \le \exp\left\{b\sum_{i=j+1}^k\eta_i\right\},
  \qquad k\ge j\ge j_b.
  \label{eq:linear-rate-gain-ratio}
\end{equation}
\end{lemma}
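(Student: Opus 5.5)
The plan is to compare every partial sum of step sizes with an integral. Put \(s_j\coloneqq\sum_{i=1}^j\eta_i\). Since \(t\mapsto\eta t^{-\alpha}\) is decreasing, for \(k\ge j\) we have
\[
  \sum_{i=j+1}^k\eta_i\ \ge\ \int_{j+1}^{k+1}\eta u^{-\alpha}\,\mathrm du
  =\frac{\eta}{1-\alpha}\bigl\{(k+1)^{1-\alpha}-(j+1)^{1-\alpha}\bigr\}.
\]
We also use the reverse bound \(\sum_{i=j+1}^k\eta_i\le\int_j^k\eta u^{-\alpha}\,\mathrm du\). All three claims then become statements about the increasing function \(\phi(u)=u^{1-\alpha}\).

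\emph{First bound.} Bound the sum by an integral:
\[
  \sum_{k\ge j}\exp\Bigl(-\tfrac{a\eta}{1-\alpha}\{(k+1)^{1-\alpha}-(j+1)^{1-\alpha}\}\Bigr)
  \le 1+\int_{j}^\infty \exp\Bigl(-\tfrac{a\eta}{1-\alpha}\{(v+1)^{1-\alpha}-(j+1)^{1-\alpha}\}\Bigr)\,\mathrm dv .
\]
Substitute \(w=(v+1)^{1-\alpha}-(j+1)^{1-\alpha}\). Then \(\mathrm dv=(1-\alpha)^{-1}(v+1)^{\alpha}\,\mathrm dw\) and \((v+1)^\alpha=\bigl(w+(j+1)^{1-\alpha}\bigr)^{\alpha/(1-\alpha)}\). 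The function \(w\mapsto\bigl(w+(j+1)^{1-\alpha}\bigr)^{\alpha/(1-\alpha)}\) is a power of degree \(\alpha/(1-\alpha)\) in \(w\), so integrating it against \(e^{-cw}\) with \(c=a\eta/(1-\alpha)\) gives at most \(C\bigl\{(j+1)^{\alpha}+1\bigr\}\). This uses \((x+y)^\beta\le 2^\beta(x^\beta+y^\beta)\) and the finiteness of \(\int w^\beta e^{-cw}\,\mathrm dw\). Since \((j+1)^\alpha\le 2j^\alpha=2\eta\,\eta_j^{-1}\) and \(1\le C\eta_j^{-1}\), we obtain \(C_a\eta_j^{-1}\).

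\emph{Second bound.} The terms are \(\eta_j e^{-a(s_n-s_j)}\). Split the sum into indices with \(j\ge j_0\) and the finitely many with \(j<j_0\). For \(j\ge j_0\), use \(\eta_j\le C(s_j-s_{j-1})\) together with \(e^{a\eta_j}\le e^{a\eta}\), so that \(e^{-a(s_n-s_j)}\le e^{a\eta}e^{-a(s_n-u)}\) for \(u\in[s_{j-1},s_j]\). The sum is then at most a Riemann-type comparison with \(\int_{-\infty}^{s_n}e^{-a(s_n-u)}\,\mathrm du=1/a\). Concretely,
\[
  \sum_j \eta_j e^{-a(s_n-s_j)}\le e^{a\eta}\sum_j\int_{s_{j-1}}^{s_j}e^{-a(s_n-u)}\,\mathrm du\le e^{a\eta}/a .
\]
This holds uniformly in \(n\), so no split is actually needed.

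\emph{Ratio bound.} We need \(\log(\eta_j/\eta_k)=\alpha\log(k/j)\le b\sum_{i=j+1}^k\eta_i\) for \(k\ge j\ge j_b\). Using the integral lower bound and writing \(x=(k+1)/(j+1)\ge1\), it suffices that
\[
  \alpha\log(2x)\le \tfrac{b\eta}{1-\alpha}(j+1)^{1-\alpha}(x^{1-\alpha}-1),
\]
since \(k/j\le 2(k+1)/(j+1)\). This is awkward near \(x=1\). There, instead, use \(\log(k/j)\le (k-j)/j\) and \(\sum_{i=j+1}^k\eta_i\ge (k-j)\eta k^{-\alpha}\). For \(k\le 2j\), the latter is at least \((k-j)\eta 2^{-\alpha}j^{-\alpha}\). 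It therefore suffices that \(\alpha/j\le b\eta2^{-\alpha}j^{-\alpha}\), which holds for large \(j\) because \(\alpha<1\).

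For \(k>2j\), we have \(x^{1-\alpha}-1\ge c\,x^{1-\alpha}\) with \(c=1-(3/2)^{-(1-\alpha)}\), because \(x\ge(2j+1)/(j+1)\ge 3/2\) when \(j\ge1\). Then \(\log(2x)\le C_\alpha x^{1-\alpha}\), and taking \(j\) large so that \(\frac{b\eta c}{1-\alpha}(j+1)^{1-\alpha}\ge\alpha C_\alpha\) closes this case.

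The only real care needed is this two-regime split in the ratio estimate. The first two bounds are routine integral comparisons.
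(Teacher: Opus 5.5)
Your proof is correct. The first bound uses the same integral comparison and substitution as the paper. Your second bound is the continuous version of the paper's telescoping argument. The paper sets $r_j=\exp\{-a\sum_{i=j+1}^n\eta_i\}$ and sums $r_j-r_{j-1}=r_j(1-e^{-a\eta_j})\ge c_a\eta_j r_j$. You instead compare $\eta_j e^{-a(s_n-s_j)}$ with $e^{a\eta}\int_{s_{j-1}}^{s_j}e^{-a(s_n-u)}\,\mathrm du$. Both give a bound uniform in $n$.

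The real difference is in the ratio bound, where the paper needs no split at $k=2j$. It writes
$\log(\eta_j/\eta_k)=\alpha\sum_{i=j+1}^k\log\frac{i}{i-1}\le\alpha\sum_{i=j+1}^k\frac{1}{i-1}$
and chooses $j_b$ so that $\alpha/(i-1)\le b\eta_i$ for every $i>j_b$, which is possible because $i^{\alpha-1}\to0$. The inequality then follows term by term, for all $k\ge j\ge j_b$ simultaneously.

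Your first regime is essentially this comparison, but carried out in one lump. You bound every $\eta_i$ by $\eta_k\ge\eta 2^{-\alpha}j^{-\alpha}$, and that bound degrades when $k\gg j$. This is what forces your second regime with the integral lower bound and $\log(2x)\le C_\alpha x^{1-\alpha}$. Both regimes are handled correctly, so your route is valid, only longer. The termwise comparison removes the difficulty near $x=1$ that motivated your split.
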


\begin{proof}
The proof is given in
\suppappendixref{app:proof-rate-power-law-contraction-sums}.
\end{proof}

\begin{lemma}[Deterministic power-law step-size bounds]
\label{lem:rate-deterministic-products}
Suppose \(-\bG\) is Hurwitz and
\(\eta_t=\eta t^{-\alpha}\) with \(\eta>0\) and \(\alpha\in(0,1)\).
For \(\bB_t\) and \(\bA_j^n\) defined in
\eqref{eq:rate-recursion-coefficients}, there are constants \(C,c>0\)
such that, for all \(n\ge j\ge1\),
\begin{equation}
  \left\|\prod_{i=j}^n\bB_i\right\|
  \le C\exp\left(-c\sum_{i=j}^n\eta_i\right),
  \qquad
  \sup_{n\ge j\ge1}\|\bA_j^n\|\le C.
  \label{eq:rate-deterministic-product-bounds}
\end{equation}
\end{lemma}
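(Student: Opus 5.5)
The plan is to reduce both bounds in \eqref{eq:rate-deterministic-product-bounds} to a single exponential contraction estimate for the matrices \(\bB_i=\bI-\eta_i\bG\), using a norm adapted to \(\bG\), and then to read off the bound on \(\bA_j^n\) from Lemma~\ref{lem:rate-power-law-contraction-sums}.

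\textbf{Step 1: adapted inner product.} Since \(-\bG\) is Hurwitz, the Lyapunov equation \(\bG^\top\bm P+\bm P\bG=\bI\) has a unique symmetric positive-definite solution \(\bm P\). Write \(\|\bx\|_{\bm P}^2=\bx^\top\bm P\bx\), and let \(\kappa=\|\bm P\|\,\|\bm P^{-1}\|\) be the condition number, so that the \(\bm P\)-norm and the Euclidean norm are equivalent up to the factor \(\sqrt\kappa\). For any \(\bx\),
\[
  \|\bB_i\bx\|_{\bm P}^2
  =\|\bx\|_{\bm P}^2-\eta_i\|\bx\|^2+\eta_i^2\bx^\top\bG^\top\bm P\bG\bx .
\]
Using \(\|\bx\|^2\ge\|\bx\|_{\bm P}^2/\|\bm P\|\) and \(\bx^\top\bG^\top\bm P\bG\bx\le\|\bG\|^2\|\bm P\|\,\|\bm P^{-1}\|\,\|\bx\|_{\bm P}^2\), this gives
\[
  \|\bB_i\|_{\bm P}^2\le 1-2c\,\eta_i+C_1\eta_i^2
  \quad\text{with } 2c=1/\|\bm P\| .
\]
Because \(\eta_i\to0\), there is an \(i_0\) such that \(C_1\eta_i\le c\) for \(i\ge i_0\). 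For those indices \(\|\bB_i\|_{\bm P}^2\le1-c\eta_i\le e^{-c\eta_i}\), hence \(\|\bB_i\|_{\bm P}\le e^{-c\eta_i/2}\).

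\textbf{Step 2: the product bound.} Split the product at \(i_0\). The finitely many early factors with \(i<i_0\) each satisfy \(\|\bB_i\|\le1+\eta\|\bG\|\) and \(\eta_i\le\eta\). Their total contribution, together with the missing factors \(e^{-c\eta_i/2}\) for \(i<i_0\), is therefore bounded by a constant depending only on \(i_0\), \(\eta\) and \(\bG\). Submultiplicativity of the \(\bm P\)-operator norm, followed by conversion back to the Euclidean operator norm at cost \(\sqrt\kappa\), gives
\[
  \Bigl\|\prod_{i=j}^n\bB_i\Bigr\|
  \le C\exp\Bigl(-\tfrac c2\sum_{i=j}^n\eta_i\Bigr).
\]
After renaming \(c\), this is the first inequality in \eqref{eq:rate-deterministic-product-bounds}. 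The factors commute, so the order of the product is irrelevant.

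\textbf{Step 3: the bound on \(\bA_j^n\).} By the triangle inequality and Step 2,
\[
  \|\bA_j^n\|
  \le \eta_j\sum_{k=j}^n C\exp\Bigl(-c\sum_{i=j+1}^k\eta_i\Bigr).
\]
The first estimate in \eqref{eq:rate-contraction-sums} with \(a=c\) bounds the sum by \(C_c\eta_j^{-1}\), uniformly in \(n\). Hence \(\|\bA_j^n\|\le CC_c\) for all \(n\ge j\ge1\).

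The main obstacle is Step 1. A Hurwitz matrix \(-\bG\) need not make \(\bI-\eta\bG\) a Euclidean contraction, since \(\bG\) may be non-normal, so the Lyapunov-norm change is essential. The burn-in index \(i_0\) is needed because \(\bB_i\) can be expansive when \(\eta_i\) is large. Both issues only add constants, and everything else is bookkeeping.
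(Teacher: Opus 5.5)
Your proof is correct, but Step~1 takes a different route from the paper's.

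\textbf{The paper's route.} The paper works with a Jordan decomposition $\bG=\bV\bJ\bV^{-1}$ over the complexification. On each block $\bJ_z=z\bI+\bN$ it factors $\bI-\eta_i\bJ_z=(1-\eta_iz)(\bI-b_i\bN)$ with $b_i=\eta_i/(1-\eta_iz)$. After a burn-in, the scalar part is bounded through $\log|1-\eta_iz|\le-c_1\eta_i$. The nilpotent part is controlled by the elementary symmetric polynomials, $|e_r(b_j,\ldots,b_n)|\le(2\sum_i\eta_i)^r/r!$. The resulting polynomial factors are then absorbed by lowering the rate to some $c<c_1$, and the change of basis costs $\|\bV\|\,\|\bV^{-1}\|$.

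\textbf{Your route.} Your Lyapunov norm from $\bG^\top\bm P+\bm P\bG=\bI$ gives the per-step contraction $\|\bB_i\|_{\bm P}\le e^{-c\eta_i/2}$ directly. Nilpotent parts, complexification and the combinatorial bound never appear, and the change-of-norm cost is $\sqrt\kappa$ rather than $\|\bV\|\,\|\bV^{-1}\|$. Your argument is shorter and also more robust: it would survive, for instance, time-varying matrices sharing a common Lyapunov function.

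\textbf{What each route buys.} Your exponent is tied to $1/\|\bm P\|$. Testing the Lyapunov equation on an eigenvector of $\bG$ shows $\|\bm P\|\ge1/(2\min\operatorname{Re}\operatorname{spec}(\bG))$, so your rate never exceeds, and is typically below, the spectral rate. The Jordan argument, by contrast, gives every rate below $\min\operatorname{Re}\operatorname{spec}(\bG)$. This difference is immaterial here, because the lemma and all its later uses need only some $c>0$. If you did want near-spectral rates, solving the Lyapunov equation for $\bG-c'\bI$ with $c'<\min\operatorname{Re}\operatorname{spec}(\bG)$ recovers them.

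Your burn-in splitting and your Step~3, which bounds $\bA_j^n$ via the first contraction-sum estimate, coincide with the paper's argument.
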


\begin{proof}
The proof is given in
\suppappendixref{app:proof-rate-deterministic-products}.
\end{proof}

The uniform coefficient bound in Lemma~\ref{lem:rate-deterministic-products} gives
\begin{equation}
  \max_{n\le T}\norm{\bpsi_{0,T}(n)}\le CT^{-1/2}.
  \label{eq:linear-rate-R0-bound}
\end{equation}

\subsubsection*{Step 2: Nonlinear remainder}

For the remainder \(\br_j=g(\bx_j)-\bG\bDelta_j\) defined above, we first bound its moment and then sum its contribution along the path. The local quadratic and global boundedness conditions in Assumption~\ref{ass:rate-linear-dynamics} give
\[
  \norm{\br_j}
  \le \max\{C_{\mathrm{sm}},C_{\mathrm{bd}}\delta_g^{-2}\}
  \norm{\bDelta_j}^2.
\]
Indeed, the local quadratic bound applies when
\(\norm{\bDelta_j}\le\delta_g\), while otherwise the global bound is at
most \(C_{\mathrm{bd}}\delta_g^{-2}\norm{\bDelta_j}^2\).  Hence
\eqref{eq:rate-trajectory-stability} implies
\begin{equation*}
  \|\br_j\|_{L^q}
  \le C
    \bigl(\E\!\left[\norm{\bDelta_j}^{2q}\right]\bigr)^{1/q}
  =C\|\bDelta_j\|_{L^{2q}}^2
  \le C\eta_j.
\end{equation*}
Minkowski's inequality and the uniform coefficient bound in Lemma~\ref{lem:rate-deterministic-products} then give
\begin{align*}
  \left\|\tnorm{\bpsi_{1,T}}\right\|_{L^q}
  &\le \frac C{\sqrt T}\sum_{j=1}^T\|\br_j\|_{L^q}
  \le \frac {C}{\sqrt T}\sum_{j=1}^T\eta_j
  \le CT^{-(\alpha-1/2)}.
\end{align*}
The last inequality follows from \(\eta_j=\eta j^{-\alpha}\).  Markov's inequality
therefore yields
\begin{equation*}
  \Pp(\tnorm{\bpsi_{1,T}}>x)
  \le CT^{-q(\alpha-1/2)}x^{-q}.
\end{equation*}
Taking \(x=K T^{-(\alpha-1/2)q/(q+1)}\), with \(K\) sufficiently large,
makes the right-hand side at most \(x\). Plugging this
result into the definition of the probability radius \(\widetilde{d}(\cdot)\)
gives
\begin{equation}
  \widetilde d(\bpsi_{1,T})
  \le C T^{-\frac{(\alpha-1/2)q}{q+1}}.
  \label{eq:rate-nonlinear-remainder-radius}
\end{equation}

\subsubsection*{Step 3: Fixed-endpoint coefficient remainder}


We next control the fixed-endpoint martingale $\bpsi_{2,T}$.  Let
$\mathbf{e}_r$ be the $r$th coordinate vector.  For fixed $r\le d$, set
\begin{equation*}
  D_{j,r}=T^{-1/2}(\mathbf{e}_r)^\top
  (\bA_j^T-\bG^{-1})\bepsilon_j.
\end{equation*}
The sequence $(D_{j,r},\cF_j)$ is a martingale-difference sequence.
Assumption~\ref{ass:rate-martingale-noise} gives, almost surely,
\begin{equation}
  \sum_{j=1}^T
  \E\!\left[|D_{j,r}|^s\mid\cF_{j-1}\right]
  \le \frac C{T^{s/2}}\sum_{j=1}^T
  \norm{\bA_j^T-\bG^{-1}}^s,
  \qquad s\in\{2,p\}.
  \label{eq:bd_E_D_jr_mmt}
\end{equation}

The coefficient bounds below quantify two effects: the slow change of the step size, and the truncated response to noise arriving near the terminal time. Their summed bounds provide the variance and moment budgets required by the martingale inequality.
\begin{lemma}[Quantitative recursion coefficients]
\label{lem:linear-rate-coefficients}
Suppose \(\bG\) is positive stable and
\(\eta_t=\eta t^{-\alpha}\) with \(\alpha\in(1/2,1)\).
There exist constants \(C,c>0\) such that, for all integers
\(T\ge j\ge1\),
\begin{equation*}
  \|\bA_j^T-\bG^{-1}\|
  \le C\left\{
    j^{\alpha-1}
    +\exp\left(-c\sum_{i=j+1}^{T+1}\eta_i\right)
  \right\}.
\end{equation*}
Moreover, for every real \(s\ge2\), there exists a constant
\(C_s<\infty\) such that, for every integer \(T\ge1\),
\begin{equation*}
  \sum_{j=1}^T\|\bA_j^T-\bG^{-1}\|^s
  \le C_sT^\alpha.
\end{equation*}
\end{lemma}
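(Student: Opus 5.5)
The plan is to write \(\bA_j^T-\bG^{-1}\) exactly through a summation-by-parts identity and then read off both estimates. Put \(\bPi_{j,k}\coloneqq\prod_{i=j+1}^k\bB_i\), so that \(\bA_j^T=\eta_j\sum_{k=j}^T\bPi_{j,k}\). Since \(\bB_{k+1}=\bI-\eta_{k+1}\bG\), we have the telescoping relation \(\bPi_{j,k}-\bPi_{j,k+1}=\eta_{k+1}\bG\,\bPi_{j,k}\). Summing over \(k=j,\dots,T\) gives
\[
  \bG\sum_{k=j}^T\eta_{k+1}\bPi_{j,k}=\bI-\bPi_{j,T+1}.
\]
Writing \(\eta_j=\eta_{k+1}+(\eta_j-\eta_{k+1})\) inside \(\bA_j^T\) then yields
\[
  \bA_j^T-\bG^{-1}
  =-\bG^{-1}\bPi_{j,T+1}
  +\sum_{k=j}^T(\eta_j-\eta_{k+1})\bPi_{j,k}.
\]
(The paper writes \(\bPi\) with a bold macro it does not define, so in the source I would use \(\prod\) explicitly; the symbol is used here only for readability.)

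For the first term, Lemma~\ref{lem:rate-deterministic-products} gives
\[
  \|\bG^{-1}\bPi_{j,T+1}\|\le C\exp\Bigl(-c\sum_{i=j+1}^{T+1}\eta_i\Bigr).
\]
This is exactly the boundary contribution in the statement.

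For the second term, use \(0\le\eta_j-\eta_{k+1}\le C\eta_j\,(k+1-j)/j\) when \(k+1\le 2j\). Combining this with \(\|\bPi_{j,k}\|\le C e^{-c\sum_{i=j+1}^k\eta_i}\), the sum over \(k\le 2j\) is at most
\[
  C\frac{\eta_j}{j}\sum_{m\ge0}m\,e^{-c'\eta_j m}\le C\frac{\eta_j}{j}\,\eta_j^{-2}=C\,\frac{j^{\alpha-1}}{\eta}.
\]
Here I use \(\sum_{i=j+1}^{j+m}\eta_i\ge c\,m\eta_j\) for \(m\le j\).

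For \(k>2j\), bound \(|\eta_j-\eta_{k+1}|\le\eta_j\). The exponent \(\sum_{i=j+1}^k\eta_i\) is at least \(c\,j^{1-\alpha}\) there, and exceeds \(c\,(\sum_{i\le k}\eta_i-\sum_{i\le j}\eta_i)\). Using the first estimate in Lemma~\ref{lem:rate-power-law-contraction-sums}, split the exponential as \(e^{-a/2}\cdot e^{-a/2}\): this tail is at most \(C\eta_j\eta_j^{-1}e^{-cj^{1-\alpha}}\le Cj^{\alpha-1}\). Together these give the pointwise bound \(C\{j^{\alpha-1}+\exp(-c\sum_{i=j+1}^{T+1}\eta_i)\}\).

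For the summed bound, apply \((a+b)^s\le 2^{s-1}(a^s+b^s)\). The first part is \(\sum_{j\le T}j^{s(\alpha-1)}\). Since \(s(1-\alpha)>0\), this is at most \(C(1+T^{1-s(1-\alpha)})\), which is \(\le C T^{\alpha}\) for \(s\ge2\) because \(1-2(1-\alpha)=2\alpha-1<\alpha\). The second part is
\[
  \sum_{j\le T}\exp\Bigl(-cs\sum_{i=j+1}^{T+1}\eta_i\Bigr)\le C\eta_{T}^{-1}\le CT^{\alpha}.
\]
This follows by the reversed-index analogue of the first estimate in Lemma~\ref{lem:rate-power-law-contraction-sums}: group the indices \(j\) into blocks where \(\sum_{i=j+1}^{T+1}\eta_i\in[\ell,\ell+1)\). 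Each block contains at most \(C\eta_T^{-1}\) indices, since \(\eta_i\ge\eta_T\) for \(i\le T\), and the block weights sum geometrically in \(\ell\).

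The main obstacle is the middle-range estimate of \(\sum_k(\eta_j-\eta_{k+1})\|\bPi_{j,k}\|\). Obtaining the sharp order \(j^{\alpha-1}\), rather than a logarithmic loss, needs the quadratic gain \(\sum_m m e^{-c\eta_j m}\asymp\eta_j^{-2}\). It also needs uniform comparability \(\eta_i\asymp\eta_j\) on \(i\in[j,2j]\), and a separate exponential tail beyond \(2j\). Small \(j\), below the threshold \(j_b\) of Lemma~\ref{lem:rate-power-law-contraction-sums}, is absorbed into constants using \(\sup\|\bA_j^n\|\le C\).
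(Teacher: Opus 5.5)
Your proof is correct, but it organizes the summation by parts differently from the paper.

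\textbf{The paper's route.} The paper divides each telescoping increment by $\eta_{k+1}$, writing $\bG\bA_j^T=\eta_j\sum_{k=j}^T\eta_{k+1}^{-1}\bigl(\prod_{i=j+1}^k\bB_i-\prod_{i=j+1}^{k+1}\bB_i\bigr)$. It then applies Abel's transform to the weights $\eta_{k+1}^{-1}$. This produces three pieces:
\begin{itemize}
\item a term $(\eta_j/\eta_{j+1}-1)\bI=O(j^{-1})$;
\item a boundary term $(\eta_j/\eta_{T+1})\prod_{i=j+1}^{T+1}\bB_i$;
\item a middle sum whose coefficients are first differences, $\eta_j(\eta_{k+1}^{-1}-\eta_k^{-1})\le\delta_j\eta_k\,\eta_j/\eta_{k+1}$ with $\delta_j\le Cj^{\alpha-1}$.
\end{itemize}
The factor $j^{\alpha-1}$ therefore comes out at once, and what remains is the uniformly bounded weighted sum $\sum_k\eta_k\exp(-\frac a2\sum_{i=j+1}^k\eta_i)$. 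The price is the growing ratios $\eta_j/\eta_{k+1}$ and $\eta_j/\eta_{T+1}$. These must be absorbed by the gain-ratio estimate \eqref{eq:linear-rate-gain-ratio}, which is valid only beyond $j_b$ and costs half the contraction rate.

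\textbf{Your route.} You split $\eta_j=\eta_{k+1}+(\eta_j-\eta_{k+1})$ against the exact identity $\bG\sum_{k=j}^T\eta_{k+1}\prod_{i=j+1}^k\bB_i=\bI-\prod_{i=j+1}^{T+1}\bB_i$. This gives a boundary term with no amplifying ratio. So you need only Lemma~\ref{lem:rate-deterministic-products} and never the gain-ratio bound; your closing remark about $j_b$ is therefore superfluous.

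In exchange, your middle weights $\eta_j-\eta_{k+1}$ are cumulative rather than first differences. The factor $j^{\alpha-1}$ then emerges only after the near/far split at $k\approx2j$. In the near window you use the quadratic sum $\sum_m(m+1)e^{-c\eta_jm}\lesssim\eta_j^{-2}$, and beyond it the decay $e^{-cj^{1-\alpha}}$. Your block-counting bound for $\sum_j\exp(-cs\sum_{i=j+1}^{T+1}\eta_i)$ is also a valid alternative to the paper's telescoping argument.

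\textbf{Two cosmetic slips.}
\begin{itemize}
\item In the near window the weight is $k+1-j=m+1$, not $m$, since the $k=j$ term $\eta_j-\eta_{j+1}$ is nonzero.
\item When $s(1-\alpha)=1$, the sum $\sum_{j\le T}j^{s(\alpha-1)}$ is of order $\log T$ rather than $O(1)$.
\end{itemize}
Neither affects the final $C_sT^\alpha$ bound.
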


\begin{proof}
The proof is given in \suppappendixref{app:proof-linear-rate-coefficients}.
\end{proof}

Applying Lemma~\ref{lem:linear-rate-coefficients} to
\eqref{eq:bd_E_D_jr_mmt} yields, almost surely,
\begin{align*}
  \sum_{j=1}^T
  \E\!\left[D_{j,r}^2\mid\cF_{j-1}\right]
  \le
   CT^{\alpha-1},\qquad
  \sum_{j=1}^T
  \E\!\left[|D_{j,r}|^p\mid\cF_{j-1}\right]
  \le CT^{\alpha-p/2}.
\end{align*}

Using the grid-to-path identity \eqref{eq:intpl_to_grid}, apply Lemma~\ref{lem:rate-fuk-nagaev} coordinatewise. The norm equivalence
  \(\|\bz\|\le\sqrt d\max_{r\le d}|\mathbf e_r^\top \bz|\) and a union bound over
the fixed dimension give
\begin{equation*}
  \Pp(\tnorm{\bpsi_{2,T}} > x)
  \le C\exp(-cT^{1-\alpha}x^2)
  +CT^{\alpha-p/2}x^{-p}.
\end{equation*}
The two terms satisfy Lemma~\ref{lem:rate-tail-to-radius} with $A_T=T^{\alpha-p/2}$, $N_T=1$, and $s_T=T^{-(1-\alpha)/2}$.
This implies
\begin{equation}
  \widetilde d(\bpsi_{2,T})
  \le C\left\{
    T^{-(1-\alpha)/2}\sqrt{\log T}
    +T^{-(p/2-\alpha)/(p+1)}
  \right\}.
  \label{eq:linear-rate-R2-ky-fan}
\end{equation}

\subsubsection*{Step 4: Endpoint-varying coefficient remainder}

For the endpoint-varying term, define
\begin{equation*}
  \by_{n+1}=\sum_{j=1}^n
  \left(\prod_{i=j+1}^n\bB_i\right)\eta_j\bepsilon_j,
  \quad 0\le n \le T.
\end{equation*}
Expanding \(\bA_j^n-\bA_j^T\) using \eqref{eq:rate-recursion-coefficients} and factoring the commuting products at time $n$ gives
\begin{align*}
  \bpsi_{3,T}(n)
  &=-\frac1{\sqrt T}
  \sum_{j=1}^n\eta_j\sum_{k=n+1}^T
  \left(\prod_{i=j+1}^k\bB_i\right)\bepsilon_j\\
  &=-\frac1{\sqrt T}\left(
    \sum_{k=n+1}^T\prod_{i=n+1}^k\bB_i
  \right)\by_{n+1}.
\end{align*}
The stable-product bound in Lemma~\ref{lem:rate-deterministic-products} and
the forward contraction-sum estimate
\eqref{eq:rate-contraction-sums} imply
\begin{equation*}
  \left\|\sum_{k=n+1}^T\prod_{i=n+1}^k\bB_i\right\|
  \le C\sum_{k=n+1}^T
  \exp\left(-c\sum_{i=n+1}^k\eta_i\right)
  \le C\eta_{n+1}^{-1}.
\end{equation*}
Consequently,
\begin{equation*}
  \max_{n\le T}\norm{\bpsi_{3,T}(n)}
  \le C\max_{n\le T}
  \frac{\norm{\by_{n+1}}}{\sqrt T\eta_{n+1}}.
\end{equation*}

The next lemma controls this maximum using blocks of length comparable to $t^\alpha$, the memory scale of the recursion. It combines contraction between blocks with a martingale maximal bound within each block.
\begin{lemma}[Adaptive-block maximal inequality]
\label{lem:linear-rate-filter}
Let \(p>2\), and consider
\begin{equation*}
  \by_{t+1}=(\bI-\eta_t\bG)\by_t+\eta_t\bepsilon_t,
  \qquad \by_1=0,
\end{equation*}
where \((\bepsilon_t,\cF_t)\) is a fixed-dimensional martingale-difference
sequence.  Suppose that, for some deterministic $M_\epsilon<\infty$ and
every $t\ge1$,
\begin{equation}
  \E\!\left[\|\bepsilon_t\|^p\mid\cF_{t-1}\right]
  \le M_\epsilon
  \quad\text{almost surely}.
  \label{eq:linear-rate-filter-moment}
\end{equation}
If \(\bG\) is positive stable and
\(\eta_t=\eta t^{-\alpha}\) with \(\alpha\in(1/2,1)\), then there are
\(C,c>0\) such that, for every integer \(T\ge2\) and \(x>0\),
\begin{equation}
\begin{split}
  \Pp\!\left(
    \max_{t\le T}
    \frac{\|\by_{t+1}\|}{\sqrt T\,\eta_{t+1}}>x
  \right)
  \le{}&CT^{1-p/2}x^{-p}
  +CT^{1-\alpha}
  \exp\{-cT^{1-\alpha}x^2\}.
\end{split}
  \label{eq:linear-rate-filter-tail}
\end{equation}
\end{lemma}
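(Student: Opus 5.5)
We partition the time axis into consecutive blocks that match the memory scale of the recursion. Set \(t_0=1\) and \(t_{m+1}=t_m+\lceil t_m^\alpha\rceil\). This gives about \(K_T\asymp T^{1-\alpha}\) blocks up to time \(T\). Inside each block we have \(\sum_{i\in[t_m,t_{m+1})}\eta_i\asymp\eta\), and \(\eta_t/\eta_{t'}\) is bounded above and below by constants. We also write \(\bm P_{j,n}\coloneqq\prod_{i=j}^n\bB_i\).

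\textbf{Step 1 (decomposition).} For \(t\in[t_m,t_{m+1})\), unroll the recursion back to the block start:
\[
\by_{t+1}=\bm P_{t_m,t}\,\by_{t_m}+\sum_{j=t_m}^{t}\bm P_{j+1,t}\,\eta_j\bepsilon_j .
\]
Lemma~\ref{lem:rate-deterministic-products} gives \(\|\bm P_{t_m,t}\|\le C\). Write \(\bm P_{j+1,t}=\bm P_{t_m,t}\bm P_{t_m,j}^{-1}\). Within a block the inverse products are uniformly bounded because \(\sum\eta_i=O(1)\) there and \(\eta_i\|\bG\|<1/2\) for large \(i\); finitely many early indices can be absorbed into the constants. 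Hence
\[
\max_{t\in\text{block }m}\|\by_{t+1}\|\le C\|\by_{t_m}\|+C\max_{t\in \text{block } m}\Bigl\|\sum_{j=t_m}^t\bm P_{t_m,j}^{-1}\eta_j\bepsilon_j\Bigr\|.
\]
The second term is a maximal martingale sum with deterministic weights, and this is what removes the need for a union bound over individual times \(t\). Both terms are compared with the threshold \(x\sqrt T\eta_{t_m}\asymp x\sqrt T\eta_{t+1}\).

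\textbf{Step 2 (tails).} Apply Lemma~\ref{lem:rate-fuk-nagaev} coordinatewise to each block martingale, taking \(V\asymp\eta_{t_m}^2 t_m^\alpha\asymp\eta_{t_m}\) and \(L_p\asymp\eta_{t_m}^{p}t_m^\alpha\). This bounds the tail at level \(x\sqrt T\eta_{t_m}\) by
\[
C\exp(-c x^2T\eta_{t_m})+C t_m^\alpha (x\sqrt T)^{-p}.
\]
Since \(\eta_{t_m}\ge\eta T^{-\alpha}\), the Gaussian part is at most \(C\exp(-cT^{1-\alpha}x^2)\).

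The block-start values \(\by_{t_m}\) are handled by the same inequality applied to the full martingale representation
\[
\by_{t_m}=\sum_{j<t_m}\bm P_{j+1,t_m-1}\eta_j\bepsilon_j .
\]
Fix the endpoint \(t_m\) and treat the coefficients as deterministic weights in \(j\). The contraction-sum estimates of Lemma~\ref{lem:rate-power-law-contraction-sums} give
\[
\sum_j\|\bm P_{j+1,t_m-1}\|^2\eta_j^2\le C\eta_{t_m},
\qquad
\sum_j\|\bm P_{j+1,t_m-1}\|^p\eta_j^p\le C\eta_{t_m}^{p-1}.
\]
For the second sum we use \eqref{eq:linear-rate-gain-ratio} to absorb \(\eta_j^{p-1}\le \eta_{t_m}^{p-1}e^{b\sum\eta}\) into the exponential decay. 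The resulting tail is
\[
C\exp(-cx^2T\eta_{t_m})+C\eta_{t_m}^{-1}(x\sqrt T)^{-p},
\]
and \(\eta_{t_m}^{-1}\asymp t_m^\alpha\), so both block contributions have the same form.

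\textbf{Step 3 (union over blocks).} Take a union bound over the \(K_T\lesssim T^{1-\alpha}\) blocks. Because \(\sum_m t_m^\alpha\asymp T\), the polynomial parts sum to
\[
C\,T\,(x\sqrt T)^{-p}=CT^{1-p/2}x^{-p}.
\]
The Gaussian parts sum to at most \(CT^{1-\alpha}\exp(-cT^{1-\alpha}x^2)\). Together these give \eqref{eq:linear-rate-filter-tail}.

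\textbf{Main obstacle.} The delicate step is the uniform control of the within-block products and their inverses, and the verification that \(\sum_j\|\bm P_{j+1,t_m-1}\|^p\eta_j^p\lesssim\eta_{t_m}^{p-1}\) holds with constants independent of \(m\) and \(T\). This requires \(\bG\) to be only positive stable rather than normal. We would handle it by working in an adapted norm in which \(\|\bI-\eta\bG\|\le1-c\eta\), and by using the step-size ratio bound \eqref{eq:linear-rate-gain-ratio}.
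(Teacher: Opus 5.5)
Your proposal is correct, but it organizes the argument differently from the paper. The paper also sets aside an initial segment, through $y_*=\max_{t\le t_0}\|\by_t/\eta_t\|$. It then uses blocks $s_{\ell+1}=s_\ell+\lceil Ls_\ell^\alpha\rceil$, with $L$ tuned so that the normalized block map contracts: $\frac{\eta_{s_\ell}}{\eta_{s_{\ell+1}}}\|\prod_{t=s_\ell}^{s_{\ell+1}-1}\bB_t\|\le 1/2$. Inside a block, an Abel summation writes $\by_{k+1}/\eta_{k+1}$ through partial sums of $(\eta_j/\eta_{j+1})\bepsilon_j$ with bounded deterministic coefficients. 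The contraction plus induction over blocks then gives the deterministic pathwise bound $\max_k\|\by_k/\eta_k\|\le C'\{y_*+\max_\ell\widehat U_\ell\}$, so Fuk--Nagaev is needed only for the block maxima $\widehat U_\ell$.

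You instead remove the $t$-dependence inside a block by the factorization $\bm P_{j+1,t}=\bm P_{t_m,t}\bm P_{t_m,j}^{-1}$, and you pass no information between blocks. Each checkpoint $\by_{t_m}$ is bounded directly by Fuk--Nagaev applied to its fixed-endpoint representation, with budgets $C\eta_{t_m}$ and $C\eta_{t_m}^{p-1}$ obtained from the gain-ratio bound and the reverse contraction sum. I checked these budgets, the threshold comparison $\eta_{t+1}\ge 2^{-\alpha}\eta_{t_m}$, and the union bound via $\sum_m t_m^\alpha\lesssim T$; all go through.

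Your route needs no tuning constant $L$ and no induction, at the cost of about twice as many Fuk--Nagaev applications (same order after the union bound) and of inverting the $\bB_i$. The paper's route gives a stronger deterministic reduction and never needs invertibility.

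Two small repairs are needed. First, some $\bB_i$ with small $i$ can be singular (for instance if $\eta_i\lambda=1$ for a real eigenvalue $\lambda$ of $\bG$), so the early indices cannot literally be absorbed into constants. Handle them as the paper does: bound $\max_{t\le t_*}\|\by_{t+1}\|/(\sqrt T\eta_{t+1})$ directly as a maximum of finitely many linear combinations of $\bepsilon_1,\ldots,\bepsilon_{t_*}$, which gives tail $CT^{-p/2}x^{-p}$, and start the blocks at $t_*$. Second, the obstacle you flag is already resolved by Lemma~\ref{lem:rate-deterministic-products}: its Jordan-form argument gives $\|\prod_{i=j}^n\bB_i\|\le C\exp(-c\sum_{i=j}^n\eta_i)$ for any positive stable $\bG$, so you do not need an adapted norm.
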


\begin{proof}
The proof is in \suppappendixref{app:proof-linear-rate-filter}.
\end{proof}

Define the scalar path \(\mathcal R_T\) by
\begin{equation*}
  \mathcal R_T(n/T)\coloneqq\frac{\|\by_{n+1}\|}{\sqrt T\,\eta_{n+1}},
  \qquad n=0,\ldots,T,
\end{equation*}
and interpolate linearly between consecutive grid points. Then $\tnorm{\mathcal R_T}=\max_{n\le T}\mathcal R_T(n/T)$, and the preceding grid bound gives $\tnorm{\bpsi_{3,T}}\le C\tnorm{\mathcal R_T}$ by \eqref{eq:intpl_to_grid}. Applying Lemma~\ref{lem:rate-tail-to-radius} to \eqref{eq:linear-rate-filter-tail}, with $A_T=T^{1-p/2}$, $N_T=T^{1-\alpha}$, and $s_T=T^{-(1-\alpha)/2}$, yields
\begin{equation}
  \widetilde d(\bpsi_{3,T})
  \le C \widetilde d(\mathcal R_T)
  \le C^2\left\{
    T^{-\frac{p-2}{2(p+1)}}
    +T^{-(1-\alpha)/2}\sqrt{\log T}
  \right\}.
  \label{eq:linear-rate-R3-ky-fan}
\end{equation}

\subsection{The leading martingale process}

The remaining task is Gaussian approximation of the leading noise path.  The
next lemma combines a Brownian embedding with quantitative control of the
embedding times.  Their deviation from the deterministic linear time scale has
two sources: the discrepancy of the cumulative conditional covariance from its
deterministic linear profile and the centered fluctuations of the embedding-time
increments.
Assumption~\ref{ass:rate-covariance-stabilization} controls the former, while
the moment assumption controls the latter and the large increments within the
interpolation grid.

\begin{lemma}[Normalized martingale functional approximation]
\label{lem:rate-leading-mds-coupling}
Let \((Y_t,\cF_t)_{t\ge1}\) be real-valued martingale differences.  Suppose
that, for some deterministic $M_Y<\infty$, some $p>2$, and every $t\ge1$,
\begin{equation}
  \E\!\left[|Y_t|^p\mid\cF_{t-1}\right]\le M_Y
  \quad\text{almost surely}.
  \label{eq:rate-leading-mds-moment}
\end{equation}
Let \(\sigma^2\ge0\) and suppose that, for some \(K<\infty\), some
\(\gamma>0\) satisfying $\gamma<1/4$ and $\gamma\le(p-2)/\{2(p+1)\}$, and every \(T\ge2\),
\begin{equation}
  \E\!\left[\max_{0\le n\le T}
  \left|
    \frac1T\sum_{j=1}^n\E\!\left[Y_j^2\mid\cF_{j-1}\right]
    -\frac nT\sigma^2
  \right|
  \right]
  \le KT^{-3\gamma}.
  \label{eq:rate-leading-scalar-covariance}
\end{equation}
For each \(T\), let \(\mathcal S_T\) be the random continuous path obtained by
affine interpolation of
$\mathcal S_T(n/T)=T^{-1/2}\sum_{j=1}^nY_j$, $0\le n\le T$.
Then there is \(C<\infty\), independent of \(T\), such that
\begin{equation}
  d_{\mathrm P}(\mathcal S_T,\sigma W_\star^{(1)})
  \le C\sqrt{\log T}\,T^{-\gamma}.
  \label{eq:rate-leading-mds-coupling}
\end{equation}
\end{lemma}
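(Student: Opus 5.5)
We plan to use the Scott--Huggins embedding of Lemma~\ref{lem:rate-scott-huggins-embedding} to build, on a common probability space, the Brownian motion $W$ and stopping times $\tau_n$ with $W(\tau_n)=\sum_{j\le n}Y_j$. We then define the coupled Brownian target $B_T(r)\coloneqq T^{-1/2}W(\sigma^2 Tr)$, which has the law of $\sigma W_\star^{(1)}$; when $\sigma=0$ we interpret it as the zero path. Lemma~\ref{lem:rate-probability-radius-calculus} gives $d_{\mathrm P}(\mathcal S_T,\sigma W_\star^{(1)})\le\widetilde d(\mathcal S_T-B_T)$, so it suffices to bound the uniform distance in probability-radius form.

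\textbf{Time-change error.} We split the uniform distance at grid points and within grid cells. At grid points, $\mathcal S_T(n/T)-B_T(n/T)=T^{-1/2}\{W(\tau_n)-W(\sigma^2 n)\}$, so the key quantity is $\max_{n\le T}|\tau_n-\sigma^2 n|$. We decompose
\[
\tau_n-\sigma^2 n=\sum_{j\le n}\bigl(\rho_j-\E[\rho_j\mid\mathcal G_{j-1}]\bigr)+\sum_{j\le n}\bigl(\E[Y_j^2\mid\cF_{j-1}]-\sigma^2\bigr).
\]
The second sum is controlled in expectation by $KT^{1-3\gamma}$ through \eqref{eq:rate-leading-scalar-covariance}. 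For the first sum, set $\nu=\min\{p/2,2\}\in(1,2]$. By \eqref{eq:rate-scott-huggins-moments} we have $\E[\rho_j^{\nu}\mid\mathcal G_{j-1}]\le C$, so Lemma~\ref{lem:rate-subquadratic-bdg} bounds the $L^\nu$ norm of its maximum by $CT^{1/\nu}$. Markov's inequality then gives, with probability at least $1-\varepsilon_1$, that $\max_n|\tau_n-\sigma^2n|\le\theta_T T$. Here $\theta_T$ must be chosen so that $\theta_T\ge T^{-3\gamma}/\varepsilon_1$ and $\theta_T\ge T^{1/\nu-1}\varepsilon_1^{-1/\nu}$. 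Taking $\varepsilon_1=T^{-\gamma}$ and $\theta_T=T^{-2\gamma}$, the first condition is immediate. The second reduces to $T^{1/\nu-1}\le T^{-2\gamma-\gamma/\nu}$, i.e. $\gamma(2\nu+1)\le\nu-1$. This holds because $\gamma\le(p-2)/\{2(p+1)\}$ when $\nu=p/2$, and because $\gamma<1/4$ (so $5\gamma<1$) when $\nu=2$ and $p\ge4$; for $2<p<4$ the constraint is exactly $\gamma\le(p-2)/\{2(p+1)\}$.

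\textbf{Brownian modulus.} On the good event, each grid difference is bounded by the Brownian oscillation of $T^{-1/2}W(T\,\cdot)$ over windows of length $\theta_T$ on $[0,2\sigma^2+1]$. By Lemma~\ref{lem:rate-brownian-modulus}, this oscillation exceeds $C\sqrt{\theta_T\log T}=CT^{-\gamma}\sqrt{\log T}$ with probability at most $T^{-\gamma}$, after choosing the constant large. We must also handle the event that $\tau_T$ exceeds this range, but that is already excluded by the time-change bound.

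\textbf{Within-cell error.} Between grid points, $B_T$ oscillates by $O(\sqrt{\log T/T})$ with high probability, again by Lemma~\ref{lem:rate-brownian-modulus}. The interpolated $\mathcal S_T$ moves by at most $T^{-1/2}\max_j|Y_j|$ within a cell. A union bound with the $p$th moment gives $\Pp(\max_j|Y_j|>x\sqrt T)\le M_Y T^{1-p/2}x^{-p}$. By the radius conversion this contributes $T^{-(p-2)/\{2(p+1)\}}\le T^{-\gamma}$.

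Summing all pieces via \eqref{eq:rate-ky-fan-subadditivity} yields $C\sqrt{\log T}\,T^{-\gamma}$. The main obstacle is the time-change step: the quadratic-variation fluctuation of the embedding times only has moments of order $\nu\le2$, and the choice of $\varepsilon_1,\theta_T$ must balance it against the covariance term. This is where both restrictions $\gamma<1/4$ and $\gamma\le(p-2)/\{2(p+1)\}$ enter, and the exponent bookkeeping above is the part we would double-check first.
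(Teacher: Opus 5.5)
There is a genuine gap in your time-change step when $p>4$. With $\nu=2$, your requirement $\gamma(2\nu+1)\le\nu-1$ becomes $5\gamma\le1$, that is, $\gamma\le1/5$. This does \emph{not} follow from $\gamma<1/4$, which only gives $5\gamma<5/4$. When $p>4$, the hypotheses allow $1/5<\gamma<1/4$ together with $\gamma\le(p-2)/\{2(p+1)\}$; for example, $p=10$ and $\gamma=0.24$.

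Write $v_j=\E[Y_j^2\mid\cF_{j-1}]$. In this range, the second-moment BDG bound plus Markov gives only $\Pp(\max_{n\le T}|\sum_{j\le n}(\rho_j-v_j)|>T^{1-2\gamma})\le CT^{4\gamma-1}$, which is not $O(T^{-\gamma})$. Rebalancing $\varepsilon_1,\theta_T$ cannot help: requiring $\varepsilon_1\lesssim T^{-\gamma}$ and $\sqrt{\theta_T}\lesssim T^{-\gamma}$ with only second moments forces $\gamma\le1/5$. Even a BDG/Rosenthal moment bound of order $p/2$ followed by Markov reaches only $\gamma\le p/\{4(p+1)\}$. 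For $p>4$ this lies strictly below the admissible range; at $p=10$ it is about $0.227$.

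The missing ingredient is a tail bound that keeps the Gaussian and heavy-tailed regimes separate. For $p>4$, apply Lemma~\ref{lem:rate-fuk-nagaev} with moment exponent $p/2$ to the $(\mathcal G_j)$-martingale differences $\rho_j-v_j$. By the Scott--Huggins comparison with $r=p$ and conditional Jensen, their conditional $p/2$-moments are bounded by $CM_Y$. By conditional Lyapunov (since $p/2>2$), their conditional variances are then bounded too, so both the variance and the moment budgets are $CT$. At $x_T=T^{1-2\gamma}$ the inequality gives $C\exp(-cT^{1-4\gamma})+CT^{1-p/2+p\gamma}$.
\begin{itemize}
\item The exponential term is superpolynomially small exactly because $\gamma<1/4$. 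This, not $5\gamma<1$, is where that hypothesis is used.
\item The polynomial term is at most $CT^{-\gamma}$ precisely when $\gamma\le(p-2)/\{2(p+1)\}$.
\end{itemize}

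Your case $2<p\le4$ via Lemma~\ref{lem:rate-subquadratic-bdg} is correct and coincides with the paper's. The remaining pieces are also sound. Treating $\sigma=0$ inside the same coupling works; the paper instead uses Doob's $L^2$ inequality there, which avoids the logarithm. Your separate within-cell estimate through $\max_j|Y_j|$ is valid. The paper sidesteps it by writing $\mathcal S_T(r)$ as the convex combination of $T^{-1/2}W(\tau_k)$ and $T^{-1/2}W(\tau_{k+1})$, so a single Brownian-modulus bound covers every $r\in[0,1]$.
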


\begin{proof}
The proof is given in \suppappendixref{app:proof-rate-leading-mds-coupling}.
\end{proof}

Recall that $\bmM_T$ affinely interpolates $\bmM_T(n/T)=T^{-1/2}\sum_{j=1}^n\bepsilon_j$ for $0\le n\le T$.
For a fixed test vector \(\btheta\in\mathbb R^d\), set
\(Y_j^{\btheta}\coloneqq\btheta^\top \bG^{-1}\bepsilon_j\),
\(M_T^{\btheta}\coloneqq\btheta^\top \bG^{-1}\bmM_T\), and
\(\sigma_{\btheta}^2\coloneqq
\btheta^\top \bG^{-1}\bS\bG^{-\top}\btheta\), where
\(\sigma_{\btheta}\ge0\).
The conditional \(p\)th-moment bound in
\eqref{eq:rate-leading-mds-moment} follows immediately from
\eqref{eq:linear-sa-rate-moments}.  Moreover,
\begin{align*}
 &\E\!\left[\max_{0\le n\le T}
 \left|\frac1T\sum_{j=1}^n
   \E\!\left[(Y_j^{\btheta})^2\mid\cF_{j-1}\right]
   -\frac nT\sigma_{\btheta}^2\right|\right]\\
 &\quad\le
 \|\bG^{-\top}\btheta\|^2
 \E\!\left[\max_{0\le n\le T}
 \left\|\frac1T\sum_{j=1}^n
   \E\!\left[\bepsilon_j\bepsilon_j^\top\mid\cF_{j-1}\right]
   -\frac nT \bS\right\|\right]
 \le CT^{-3\gamma}.
\end{align*}
Consequently, for every $\widetilde\gamma>0$ with $\widetilde\gamma\le\gamma$, $\widetilde\gamma<1/4$, and $\widetilde\gamma\le(p-2)/\{2(p+1)\}$, Lemma~\ref{lem:rate-leading-mds-coupling} applied to $(Y_j^{\btheta},\cF_j)$ gives
\begin{equation}
  d_{\rm P}(M_T^{\btheta},\sigma_{\btheta}W_\star^{(1)})
  \le CT^{-\widetilde\gamma}\sqrt{\log T}.
  \label{eq:mds_coupling_rate}
\end{equation}
By the definition of \(\bZ_\star\), the process \(\btheta^\top\bZ_\star\) has the
same law as \(\sigma_{\btheta} W_\star^{(1)}\).

\subsection{Completion of the theorem proofs}

\begin{proof}[Proof of Theorem~\ref{thm:linear-fclt-rate}]
Put $\overline\gamma\coloneqq\min\{\gamma,(p-2)/\{2(p+1)\},(1-\alpha)/2\}$.
Since \((1-\alpha)/2<1/4\), we have
\(0<\overline\gamma<1/4\) and
\(\overline\gamma\le(p-2)/\{2(p+1)\}\).  Moreover, the
covariance-stabilization bound at exponent \(\gamma\) remains valid, with the
same constant, at exponent
\(\overline\gamma\).  Thus \eqref{eq:mds_coupling_rate} applies with
\(\widetilde\gamma=\overline\gamma\).
Because $\bPhi_T$, $\bmM_T$, and the four remainder paths are affine on the
same grid, linear interpolation of
\eqref{eq:rate-four-remainder-decomposition} gives the exact
pathwise identity
\begin{equation}
  \bPhi_T-\bG^{-1}\bmM_T
  =\bpsi_{0,T}+\bpsi_{1,T}+\bpsi_{2,T}+\bpsi_{3,T}
  \quad\text{on }[0,1].
  \label{eq:rate-four-remainder-pathwise}
\end{equation}
Combining Lemma~\ref{lem:rate-probability-radius-calculus} with
\eqref{eq:linear-rate-R0-bound},
\eqref{eq:rate-nonlinear-remainder-radius},
\eqref{eq:linear-rate-R2-ky-fan},
\eqref{eq:linear-rate-R3-ky-fan},
\eqref{eq:mds_coupling_rate}, and
\eqref{eq:rate-four-remainder-pathwise} gives
\begin{align*}
  d_{\mathrm P}\!\left(\btheta^\top\bPhi_T,\btheta^\top\bZ_\star\right)
  &\le
  \widetilde d\!\left(
    \btheta^\top(\bPhi_T-\bG^{-1}\bmM_T)
  \right)
  +d_{\mathrm P}\!\left(M_T^{\btheta},\sigma_{\btheta} W_\star^{(1)}\right)\\
  &\le
  (\|\btheta\|\vee1)\sum_{i=0}^3\widetilde d(\bpsi_{i,T})
  +CT^{-\overline\gamma}\sqrt{\log T}\\
  &\le C\bigg\{
    T^{-\overline\gamma}\sqrt{\log T}
    +T^{-\frac{p-2}{2(p+1)}}
    +T^{-\frac{1-\alpha}{2}}\sqrt{\log T}
    +T^{-\frac{(\alpha-1/2)q}{q+1}}
  \bigg\}.
\end{align*}
Since
\(
  T^{-\overline\gamma}
  \le
  T^{-\gamma}
  +T^{-\frac{p-2}{2(p+1)}}
  +T^{-(1-\alpha)/2}
\),
and a constant multiple of \(\sqrt{\log T}\) is at least one for
\(T\ge2\), this estimate implies \eqref{eq:linear-sa-fclt-rate} while keeping
the conditional-covariance-stabilization and finite-moment scales explicit.
If \(g(\bx_t)=\bG(\bx_t-\bx^\star)\) almost surely for every \(t\), then
\(\br_t=0\) and hence
\(\bpsi_{1,T}=0\) in \eqref{eq:rate-four-remainder-pathwise}.  Repeating the
same bounds without \(\widetilde d(\bpsi_{1,T})\) removes the nonlinear term in
\eqref{eq:linear-sa-fclt-rate}.
\end{proof}

\section{Related Work}
\label{sec:rate-related}

\subsection*{Finite-time analysis and Gaussian approximation}

Finite-time analyses of SA quantify iterate errors through moment bounds.
Examples include mean-square bounds for SGD under strong convexity
\cite{moulines2011non} and for contractive SA \cite{chen2020finite}. For
contractive SA with Markovian noise, Chen et al.~\cite{chen2021lyapunov}
develop a Lyapunov framework for finite-sample analysis. For nonlinear
two-time-scale SA, Han et al.~\cite{han2026finite} establish decoupled
mean-square convergence rates under nested local linearity and suitable
step-size conditions, using fourth-moment estimates to control nonlinear
errors. These results quantify iterate error, while a complementary question
concerns the accuracy of Gaussian approximations to their distributions.

For Polyak--Ruppert averaged SGD, Anastasiou et
al.~\cite{anastasiou2019normal} develop smooth-test bounds, while Shao and
Zhang~\cite{shao2022berry} obtain convex-set Berry--Esseen bounds. For i.i.d.\
linear SA, Samsonov et al.~\cite{samsonov2024gaussian} obtain convex-set bounds
with best polynomial order \(T^{-1/4}\) over their step-size family, while
Butyrin et al.~\cite{butyrin2025improved} reach \(T^{-1/3}\) under refined
assumptions. Further Gaussian approximation results cover nonlinear averaged
SGD \cite{sheshukova2026gaussian}, Markovian linear SA
\cite{samsonov2025markov}, temporal-difference learning
\cite{srikant2024rates,wuliweirinaldo2026td}, and two-time-scale SA
\cite{kong2025twotimescale,butyrin2026twotimescale}. Recent Wasserstein
analyses also treat nonlinear SA \cite{kong2026wasserstein}, asynchronous
averaged Q-learning \cite{liu2026asynchronous}, and the last SA iterate
\cite{haque2026accurately}. These quantitative distributional bounds concern
endpoint laws, whereas our criterion compares the laws of entire projected
partial-sum paths.

\subsection*{Functional limits and statistical inference}

Functional central limit theorems for Polyak--Ruppert-type partial-sum
processes support random-scaling inference in online SGD \cite{lee2021fast}, Local SGD
\cite{li2021statistical}, and SA with dependent data
\cite{li2025stream}. A partial-sum FCLT is also established for
asynchronous averaged Q-learning \cite{liu2026asynchronous}. For constant-step
SA, Xie and Zhang~\cite{xie2022statistical} obtain a related FCLT with partial
sums centered at the stationary mean, which may differ from the target
solution. These functional limits justify path-based inference asymptotically,
but do not themselves quantify the approximation error at a finite horizon.

A distinct line of work studies continuous-time paths formed from rescaled
iterate errors. Classical diffusion limits for SA
\cite{kushner2003stochastic} have recently been developed further for SGD
\cite{flamand2026functional}, Local SGD \cite{liang2023asymptotic}, and
two-time-scale SA\@. In particular, Han et
al.~\cite{han2026decoupled}
rescale the fast and slow errors on their respective time scales and establish
convergence to stationary Ornstein--Uhlenbeck processes. These results
describe local iterate fluctuations. Their path constructions and diffusion
targets differ from the cumulative-error path and Brownian target considered
here.

\subsection*{Quantitative trajectory approximations}

\ifdefined\isarxivpreprint
For quantitative trajectory approximation, the result most closely related to
the present work is a preliminary upper bound from our earlier
work~\cite[Theorem~5]{li2023nonlinearv2}.
As detailed following Theorem~\ref{thm:linear-fclt-rate}, the present work
substantially improves the polynomial dependencies associated with
finite-moment noise, algorithmic smoothing, and nonlinearity.  The preliminary
rate result is not part of the current version of the earlier
preprint~\cite{li2023nonlinear}.

Apart
from this earlier preprint,
quantitative results on rates of weak convergence in functional central
limit theorems for SA are scarce. One exception is
\cite{wang2026scaling}, which provides finite-sample path-space approximation
bounds for the rescaled iterates of univariate constant-step SGD and SGLD,
with an Ornstein--Uhlenbeck process as the approximating target.
\else
Finite-horizon functional approximation bounds are available for rescaled
raw-iterate paths of univariate constant-step SGD and SGLD with
Ornstein--Uhlenbeck limits \cite{wang2026scaling}.
\fi
Under the moment assumptions of Corollary~6 therein and bounded model
constants, the pure-SGD specialization of the resulting
L\'evy--Prokhorov bound, after identifying the number of updates as
\(T\asymp h^{-1}\), has a polynomial rate approaching \(T^{-1/20}\)
as the available moment order \(p\to\infty\).
To the best of our knowledge,
no previous work has established an explicit finite-horizon
Prokhorov-distance rate in a Brownian invariance principle for each fixed
scalar projection of the full partial-sum path considered here in nonlinear SA
with decreasing step sizes;
corresponding path-space lower bounds are likewise unavailable.

Broadening the scope beyond SA, the two noise-driven terms in our main bound,
\(T^{-\rateCov}\) and \(T^{-\rateOsc}\), are closely related to a substantial
literature on approximation rates for noise partial-sum paths.  For independent
noise satisfying Assumption~\ref{ass:rate-martingale-noise}, the uniform-metric
Prokhorov bound of order \(T^{-\rateOsc}\) was first established for
\(2<p\le3\) in \cite{borovkov1974rate}.  The independent-variable theory was
later extended to all \(p>2\) in \cite{sakhanenko2006estimates}, with
corresponding rates for special martingale-difference arrays and related bounds
for semimartingales given in \cite{haeusler1984exact,kubilius1985rate}.

Matching lower bounds for both of these noise-driven terms are also available.
Under the same noise requirement
as in Theorem~\ref{thm:rate-iid-leading-minimax},
a lower bound of order \(T^{-\rateOsc}\) for the noise partial-sum path was
established in \cite{arak1976estimate}.  Theorem~\ref{thm:rate-iid-leading-minimax}
adapts this construction to the SA partial-sum path.  For the
\(T^{-\rateCov}\) term, a similar lower bound for the Kolmogorov distance in
the terminal martingale CLT was obtained in \cite{mourrat2013rate}, although
not a uniform-in-horizon Prokhorov lower bound for the entire path.

\section{Conclusion}
\label{sec:rate-conclusion}

In this work, we establish a quantitative functional central limit theorem for each fixed scalar projection of the normalized partial-sum path of nonlinear stochastic approximation. Under the Prokhorov distance induced by the specified \(J_1\) Skorokhod metric, the finite-time upper bound separates four sources of approximation error: conditional-covariance stabilization, finite-moment noise, algorithmic smoothing, and the nonlinear remainder. This decomposition shows how the noise, the recursion, and the step-size exponent jointly determine the accuracy of Brownian approximation at the path level. For the prescribed recursion, four lower-bound constructions recover the corresponding polynomial exponents over their respective oracle classes. The scalar Gaussian example further shows that the logarithmic factor in the smoothing term is unavoidable.

Several directions remain open. First, the upper bound applies to each fixed scalar projection, with constants whose dependence on dimension is not tracked. Extending the result to vector-valued paths or uniformly over projection directions, while determining the optimal dependence on \(d\), would broaden its use in high-dimensional settings. Second, convergence of the path law does not by itself give an informative relative approximation for rare-event probabilities. When an event has probability smaller than the approximation error, the bound may provide little information about its likelihood. Deviation estimates for SA trajectories and their Polyak--Ruppert-averaged paths would therefore complement the present results.
Finally, the smoothing term \(T^{-\rateSm}\sqrt{\log T}\) arises from the SA recursion itself, even in a scalar linear model with Gaussian noise. It remains to determine whether other SA-type algorithms can reduce or eliminate this contribution. More broadly, for a fixed stochastic oracle class, what is the optimal rate of Brownian approximation for normalized partial-sum paths when one is free to choose among admissible SA-type algorithms? We leave these questions for future work.

\bibliographystyle{plainnat}
\bibliography{bib/stat}

\clearpage
\appendix
\begin{center}
  {\Large\bfseries Supplementary\par}
\end{center}
\appendixtableofcontents
\bigskip
\noindent This supplement contains proofs of the auxiliary lemmas and
lower-bound results used in the manuscript. All notation and the numbering of
equations, theorems, and references follow the manuscript.
\medskip
\begingroup
  \redirectappendixcontents
  \section{Proofs of Auxiliary Lemmas}
\label{app:linear-fclt-rate}


\subsection{Probability-radius calculus}
\label{app:proof-rate-probability-radius-calculus}

\begin{proof}[Proof of Lemma~\ref{lem:rate-probability-radius-calculus}]
We first record a consequence of the definition of \(\widetilde d\).  If
\(a>\widetilde d(Y)\), then there is \(b<a\) such that
\(b\vee\Pp(\tnorm{Y}>b)<a\).  Monotonicity of the tail probability therefore
gives
\begin{equation*}
  \Pp(\tnorm{Y}>a)\le\Pp(\tnorm{Y}>b)<a.
\end{equation*}

By \eqref{eq:j1-bounded-by-uniform}, the identity time change gives
\(d_{\mathrm S}(X,Y)\le\tnorm{X-Y}\).  Apply the preceding observation to
\(X-Y\), then use the coupling characterization of Prokhorov distance and let
\(a\downarrow\widetilde d(X-Y)\).  This proves
\eqref{eq:rate-coupling-prokhorov}.

For each \(m\), choose \(a_m>\widetilde d(Y_m)\).  The triangle inequality
and a union bound give
\begin{equation*}
  \Pp\!\left(
    \tnorm{\sum_{m=1}^M Y_m}>\sum_{m=1}^M a_m
  \right)
  \le\sum_{m=1}^M\Pp(\tnorm{Y_m}>a_m)
  \le\sum_{m=1}^M a_m.
\end{equation*}
Letting every \(a_m\downarrow\widetilde d(Y_m)\) proves
\eqref{eq:rate-ky-fan-subadditivity}.  Finally, put
\(a=\|L\|\vee1\).  For every \(x>\widetilde d(Y)\),
\begin{equation*}
  \Pp(\tnorm{LY}>ax)
  \le\Pp(\tnorm{Y}>x)
  \le x\le ax.
\end{equation*}
Letting \(x\downarrow\widetilde d(Y)\) proves
\eqref{eq:rate-ky-fan-linear-map}.
\end{proof}

\subsection{Maximal martingale Fuk--Nagaev inequality}
\label{app:proof-rate-fuk-nagaev}

\begin{proof}[Proof of Lemma~\ref{lem:rate-fuk-nagaev}]
Corollary~2.5 of \cite{fan2017deviation}, with its predictable variance and
conditional $p$th-moment quantities bounded by $V_n$ and $L_{p,n}$,
respectively, gives the one-sided estimate
\[
  \Pp\!\left(\max_{k\le n}\sum_{i=1}^kD_i>x\right)
  \le C_p\exp(-c_px^2/V_n)+C_pL_{p,n}x^{-p}.
\]
The same estimate applies to the martingale differences $-D_i$, with the
same $V_n$ and $L_{p,n}$.  A union bound over the two signs proves
\eqref{eq:rate-fuk-nagaev}, after absorbing the factor two into $C_p$.
If $V_n=0$, then every $D_i=0$ almost surely, so the stated convention is
consistent.
\end{proof}

\subsection{Two-regime tail-to-radius conversion}
\label{app:proof-rate-tail-to-radius}

\begin{proof}[Proof of Lemma~\ref{lem:rate-tail-to-radius}]
Set \(x = K\big(
  A_T^{1/(p+1)} + s_T\sqrt{\log(eN_T/s_T)}
\big)\) in \eqref{eq:rate-two-regime-tail},
where \(K\ge1\) is a constant chosen below.  If \(x\ge1\), the conclusion
is immediate because \(\widetilde d(Y_T)\le1\).  Suppose therefore that
\(x<1\).  Since \(x\ge KA_T^{1/(p+1)}\), \eqref{eq:rate-two-regime-tail}
gives
\begin{align*}
  \mathbb P(\tnorm{Y_T} > x)
  &\le C_0 A_Tx^{-p} + C_0 N_T \exp\{-c_0x^2/s_T^2\}\\
  &\le C_0 K^{-(p+1)}x + C_0 N_T (e N_T/s_T)^{-c_0K^2}\\
  &= \frac{C_0}{K^{p+1}}x
   + C_0e^{-c_0K^2}N_T^{1- c_0K^2}s_T^{c_0K^2}
  \le x,
\end{align*}
where the last inequality holds by taking \(K\) large enough and 
invoking \(s_T \le 1\) and \(\log (eN_T/ s_T) \ge 1\).
Hence \eqref{eq:rate-two-regime-tail} gives
\(\Pp(\tnorm{Y_T}>x)\le x\), which proves
\eqref{eq:rate-tail-to-ky-fan}.
\end{proof}

\subsection{Scott--Huggins martingale embedding}
\label{app:proof-rate-scott-huggins-embedding}

\begin{proof}[Source of Lemma~\ref{lem:rate-scott-huggins-embedding}]
This is Theorem~1 and equations~(2.1)--(2.2) of
\cite[pp.~447--450]{scott1983embedding}.  That theorem supplies both the
continuous Brownian filtration and an increasing discrete family
\((\mathcal G_j)\) for which
\(\mathcal F_j\subseteq\mathcal G_j\) and \(\tau_j\) is
\(\mathcal G_j\)-measurable.  Completing and enlarging the probability space
is part of the coupling construction and leaves the law of \((S_k)\)
unchanged.
\end{proof}

\subsection{Brownian modulus tail bound}
\label{app:proof-rate-brownian-modulus}

\begin{proof}[Proof of Lemma~\ref{lem:rate-brownian-modulus}]
Fix \(0<h\le1\), and put \(N=\lceil H/h\rceil\).  For
\(\ell=0,\ldots,N-1\), define the double block
\[
  I_\ell=[\ell h,\{(\ell+2)h\}\wedge H].
\]
We first check that these blocks cover every pair appearing in the modulus.
Let \(0\le s\le t\le H\) and \(t-s\le h\).  If \(s=H\), then
\(s=t=H\) and the increment is zero.  Otherwise, let
\(\ell=\lfloor s/h\rfloor\).  Then \(0\le\ell\le N-1\), and
\[
  \ell h\le s< (\ell+1)h,
  \qquad
  t\le s+h< (\ell+2)h,
\]
while \(t\le H\). Hence \(s,t\in I_\ell\).  Consequently,
\[
  \sup_{\substack{s,t\in[0,H]\\|s-t|\le h}}|W(t)-W(s)|
  \le
  \max_{0\le\ell<N}\sup_{s,t\in I_\ell}|W(t)-W(s)|.
\]

Fix one block and set
\[
  R_\ell\coloneqq\sup_{0\le v\le2h}
  |W(\ell h+v)-W(\ell h)|.
\]
Because \(I_\ell\subseteq[\ell h,\ell h+2h]\), the triangle inequality
gives, for every \(s,t\in I_\ell\),
\[
  |W(t)-W(s)|
  \le |W(t)-W(\ell h)|+|W(s)-W(\ell h)|
  \le 2R_\ell.
\]
Here Brownian motion is defined for all nonnegative times.  Thus, for the
last block, replacing its clipped interval \(I_\ell\) by
\([\ell h,\ell h+2h]\) merely enlarges the supremum and does not require a
new process.

By stationary increments, \(R_\ell\) has the same distribution as
\(\sup_{0\le v\le2h}|W(v)|\).  Applying the reflection principle
separately to \(W\) and \(-W\), and then using a union bound, yields
\begin{align*}
  \Pp\!\left(\sup_{s,t\in I_\ell}|W(t)-W(s)|>u\right)
  &\le \Pp(R_\ell>u/2)\\
  &=\Pp\!\left(\sup_{0\le v\le2h}|W(v)|>u/2\right)\\
  &\le 2\Pp\!\left(
    \sup_{0\le v\le2h}W(v)>u/2
  \right)\\
  &=4\Pp\!\left(W(2h)>u/2\right)\\
  &\le4\exp\{-u^2/(16h)\}.
\end{align*}
The last line uses \(W(2h)/\sqrt{2h}\sim N(0,1)\) and the Gaussian
Chernoff bound.  Notice that the factor \(4\) consists of a factor \(2\)
from the two signs and a factor \(2\) from the reflection principle for
each one-sided maximum.

A union bound over the \(N\) double blocks yields
\[
  \Pp\!\left(
    \sup_{\substack{s,t\in[0,H]\\|s-t|\le h}}
    |W(t)-W(s)|>u
  \right)
  \le 4\lceil H/h\rceil\exp\{-u^2/(16h)\}.
\]
Since \(h\le1\),
\(\lceil H/h\rceil\le(H+1)h^{-1}\).  Taking
\(C_H\ge\max\{4(H+1),16\}\) proves the stated form.
\end{proof}

\subsection{Power-law contraction-sum estimates}
\label{app:proof-rate-power-law-contraction-sums}

\begin{proof}[Proof of Lemma~\ref{lem:rate-power-law-contraction-sums}]
An integral comparison gives, for $k\ge j$,
\begin{equation*}
  \sum_{i=j+1}^k\eta_i
  \ge c_0\left\{(k+1)^{1-\alpha}-(j+1)^{1-\alpha}\right\}.
\end{equation*}
A second integral comparison, followed by the substitution
$u=x^{1-\alpha}-(j+1)^{1-\alpha}$, yields
\begin{align*}
  \sum_{k=j}^{\infty}
  \exp\left(-a\sum_{i=j+1}^k\eta_i\right)
  &\le C+C\int_{j+1}^{\infty}
  e^{-c_1\{x^{1-\alpha}-(j+1)^{1-\alpha}\}}\,\mathrm dx\\
  &\le C(j+1)^\alpha
  \int_0^\infty e^{-c_1u}(1+u)^{\alpha/(1-\alpha)}\,\mathrm du\\
  &\le C(j+1)^\alpha\le C_a\eta_j^{-1}.
\end{align*}
This proves the first inequality in~\eqref{eq:rate-contraction-sums}.

For the reverse sum, fix an integer $n\ge1$ and, for $0\le j\le n$, put
$r_j=\exp\{-a\sum_{i=j+1}^n\eta_i\}$.  Since $(\eta_j)_{j\ge1}$ is bounded,
there is $c_a>0$ such that, for $1\le j\le n$,
\[
  r_j-r_{j-1}=r_j(1-e^{-a\eta_j})\ge c_a\eta_jr_j.
\]
Summing over $j=1,\ldots,n$ proves the second inequality in
\eqref{eq:rate-contraction-sums}.

Finally,
\[
  \log\frac{\eta_j}{\eta_k}
  =\alpha\sum_{i=j+1}^k\log\frac{i}{i-1}
  \le\alpha\sum_{i=j+1}^k\frac1{i-1}.
\]
Because $i^{\alpha-1}\to0$, for each $b>0$ one can choose $j_b$ so that
$\alpha/(i-1)\le b\eta i^{-\alpha}=b\eta_i$ whenever $i>j_b$.
Summation proves~\eqref{eq:linear-rate-gain-ratio}.
\end{proof}

\subsection{Deterministic power-law step-size bounds}
\label{app:proof-rate-deterministic-products}

\begin{proof}[Proof of Lemma~\ref{lem:rate-deterministic-products}]
View the real matrices as operators on the complexification of $\R^d$,
equipped with the Euclidean norm. This does not change the operator norm of
a real matrix.  Take a Jordan decomposition $\bG=\bV\bJ\bV^{-1}$.  Fix numbers
\[
  0<c<c_1<\min_{z\in\operatorname{spec}(\bG)}\operatorname{Re}(z).
\]
It is enough to treat one Jordan block of size \(m\in\{1,\ldots,d\}\),
$\bJ_z=z\bI+\bN$, where $\bN^m=0$.  Choose $j_0$ so large that, for every
$i\ge j_0$ and every eigenvalue $z$,
\begin{equation}
  \log|1-\eta_i z|\le-c_1\eta_i,
  \qquad |\eta_i z|\le\frac12.
  \label{eq:rate-jordan-scalar-contraction}
\end{equation}
Such a common \(j_0\) exists because the spectrum is finite,
\(\eta_i\to0\), and
\[
  \log|1-\eta_i z|
  =\frac12\log\{1-2\eta_i\operatorname{Re}(z)
                    +\eta_i^2|z|^2\}
  =-\eta_i\operatorname{Re}(z)+O(\eta_i^2).
\]
For $n\ge j\ge j_0$, put
$b_i=\eta_i/(1-\eta_i z)$. 
Factoring the scalar part from each factor
gives
\[
  \bI-\eta_i\bJ_z=(1-\eta_i z)(\bI-b_i\bN).
\]
Since all matrices \(\bI-b_i\bN\) are polynomials in the same nilpotent matrix
\(\bN\), they commute.  Expanding their product and using \(\bN^m=0\) yields
\begin{align}
  \prod_{i=j}^n(\bI-\eta_i\bJ_z)
  &={\prod_{i=j}^n(1-\eta_i z)}
    \prod_{i=j}^n(\bI-b_i\bN) \notag\\
  &={\prod_{i=j}^n(1-\eta_i z)}
    \sum_{r=0}^{m-1}(-1)^r e_r(b_j,\ldots,b_n)\bN^r,
  \label{eq:rate-jordan-product-expansion}
\end{align}
where \(e_0=1\) and, for \(1\le r<m\),
\[
  e_r(b_j,\ldots,b_n)
  \coloneqq\sum_{j\le i_1<\cdots<i_r\le n}
       b_{i_1}\cdots b_{i_r}.
\]
Indeed, selecting \(-b_i\bN\) from exactly \(r\) factors produces the
coefficient \((-1)^re_r\) of \(\bN^r\), while every term of degree at least
\(m\) vanishes.  Furthermore,
\(|1-\eta_i z|\ge1-|\eta_i z|\ge1/2\), so
\(|b_i|\le2\eta_i\).  Hence
\begin{align}
  |e_r(b_j,\ldots,b_n)|
  &\le\sum_{j\le i_1<\cdots<i_r\le n}
       |b_{i_1}|\cdots|b_{i_r}|\notag\\
  &\le\frac1{r!}\left(\sum_{i=j}^n|b_i|\right)^r
  \le\frac{(2\sum_{i=j}^n\eta_i)^r}{r!}.
  \label{eq:basic_polynomial_bound}
\end{align}
The middle inequality follows by expanding
\((\sum_i|b_i|)^r\): every product with \(r\) distinct indices occurs
exactly \(r!\) times, and all terms with repeated indices are nonnegative.

Combining \eqref{eq:rate-jordan-scalar-contraction}, \eqref{eq:rate-jordan-product-expansion} and \eqref{eq:basic_polynomial_bound} gives
\begin{align}
  \left\|\prod_{i=j}^n(\bI-\eta_i\bJ_z)\right\|
  &\le \exp\Big\{-c_1 \sum\nolimits_{i=j}^n\eta_i\Big\}
       \sum_{r=0}^{m-1}\frac{(2\sum_{i=j}^n\eta_i)^r}{r!}\|\bN^r\|\notag\\
  &\le C\exp\Big\{-c_1\sum\nolimits_{i=j}^n\eta_i\Big\}
       \sum_{r=0}^{m-1}\Big(\sum\nolimits_{i=j}^n\eta_i\Big)^r
  \le C\exp\Big\{-c\sum\nolimits_{i=j}^n\eta_i\Big\}.
\end{align}
For each \(0\le r\le m-1\),
\(
  \sup_{u\ge0}u^r e^{-(c_1-c)u}<\infty
\), which justifies the last inequality.

For a general Jordan normal form \(\bJ\),
write \(\bJ=\operatorname{diag}(\bJ_1,\ldots,\bJ_q)\), where
\(\bJ_1,\ldots,\bJ_q\) are its Jordan blocks.  The preceding argument applies
to every block.  Since the Euclidean operator norm of a block-diagonal matrix
equals the maximum of the operator norms of its blocks, after enlarging \(C\)
if necessary,
\[
  \left\|\prod_{i=j}^n(\bI-\eta_i\bJ)\right\|
  =\max_{1\le\ell\le q}
   \left\|\prod_{i=j}^n(\bI-\eta_i\bJ_\ell)\right\|
  \le C\exp\left(-c\sum_{i=j}^n\eta_i\right),
  \qquad n\ge j\ge j_0.
\]
Because \(\bG=\bV\bJ\bV^{-1}\) and \(\bB_i=\bI-\eta_i\bG\),
\begin{equation}
  \left\|\prod_{i=j}^n\bB_i\right\|
  \le \|\bV\|\,\|\bV^{-1}\|C
  \exp\left(-c\sum_{i=j}^n\eta_i\right)
  \le C\exp\left(-c\sum_{i=j}^n\eta_i\right).
  \label{eq:prod-B_i-bound}
\end{equation}
Since \(j_0\) is fixed, splitting the product at \(j_0\) when \(j<j_0\)
and absorbing the finitely many prefix and exponential factors, together with
the finitely many cases \(j\le n<j_0\), into \(C\) extends this bound to all
\(n\ge j\ge1\).

For the second inequality, applying the definition in
\eqref{eq:rate-recursion-coefficients}
gives
\begin{align}
  \norm{\bA_j^n}
  &\le \eta_j\sum_{k=j}^n
  \left\|\prod_{i=j+1}^k\bB_i\right\|
  \overset{
    \eqref{eq:prod-B_i-bound}}{\le} C\eta_j\sum_{k=j}^n
  \exp\left(-c\sum_{i=j+1}^k\eta_i\right).
  \label{eq:rate-A-by-contraction-sum}
\end{align}
Applying the first inequality in \eqref{eq:rate-contraction-sums} with
\(a=c\) gives
\(
  \norm{\bA_j^n}
  \le C\eta_j\eta_j^{-1}
  \le C
\)
uniformly over \(n\ge j\ge1\), which is the second assertion.
\end{proof}

\subsection{Quantitative recursion coefficients}
\label{app:proof-linear-rate-coefficients}

\begin{proof}[Proof of Lemma~\ref{lem:linear-rate-coefficients}]
Every factor in the matrix product \(\prod_{i=j+1}^k \bB_i\) is a polynomial in $\bG$. The factors therefore commute, and
\begin{equation*}
  \prod_{i=j+1}^k \bB_i -\prod_{i=j+1}^{k+1} \bB_i= \eta_{k+1}\bG
  \prod_{i=j+1}^k \bB_i.
\end{equation*}
Applying Abel's transform gives the exact identity
\begin{align}
  \bG\bA_j^T
  &= 
  \eta_j \bG \sum_{k=j}^T \prod_{i=j+1}^k \bB_i
  = \eta_j \sum_{k=j}^T \eta_{k+1}^{-1}\Bigg(\prod_{i=j+1}^k \bB_i
   -\prod_{i=j+1}^{k+1} \bB_i\Bigg)\notag\\
  &=
  \frac{\eta_j}{\eta_{j+1}}\bI
  -\frac{\eta_j}{\eta_{T+1}}\bigg(\prod_{i=j+1}^{T+1} \bB_i\bigg)
  +\sum_{k=j+1}^T
  \eta_j\left(\frac1{\eta_{k+1}}-\frac1{\eta_k}\right)
  \bigg(\prod_{i=j+1}^{k} \bB_i\bigg).
  \label{eq:linear-rate-Abel-identity}
\end{align}
Note that the sequence \(\frac{1}{\eta_k}\) is increasing and,
\begin{equation*}
  \delta_j=\sup_{k\ge j+1}
  \frac{\eta_{k-1}-\eta_k}{\eta_{k-1}^2}
  \le Cj^{\alpha-1},\quad \eta_j/\eta_{j+1}-1\le Cj^{-1}.
\end{equation*}
Lemma~\ref{lem:rate-deterministic-products} tells us that, for some $a>0$,
\begin{equation}
  \norm{\prod_{i=j+1}^k\bB_i}
  \le C\exp\left(-a\sum_{i=j+1}^k\eta_i\right).
  \label{eq:linear-rate-stable-product}
\end{equation}
Taking $b=a/2$ in~\eqref{eq:linear-rate-gain-ratio} bounds the terminal term
in~\eqref{eq:linear-rate-Abel-identity} by
\begin{equation}
  \frac{\eta_j}{\eta_{T+1}}\norm{\prod_{i=j+1}^{T+1} \bB_i}
  \le C\exp\left(-\frac a2\sum_{i=j+1}^{T+1}\eta_i\right).
  \label{eq:tmn_abel_GA}
\end{equation}
For the sum in~\eqref{eq:linear-rate-Abel-identity}, the gain-ratio bound yields the following estimate. The constant absorbs the extra factor involving \(\eta_{k+1}\) and the finitely many indices below the gain-ratio threshold:
\begin{equation}
  \eta_j\left(\frac1{\eta_{k+1}}-\frac1{\eta_k}\right)
  \le \delta_j\eta_k\frac{\eta_j}{\eta_{k+1}}
  \le C\delta_j \eta_k \exp\left(
    \frac{a}{2}\sum_{i=j+1}^k \eta_i
  \right).
  \label{eq:sum_abel_GA}
\end{equation}
Returning to \eqref{eq:linear-rate-Abel-identity}, these estimates control
\(\|\bA_j^T-\bG^{-1}\|\).  Indeed,
\begin{align*}
  \norm{\bA_j^T-\bG^{-1}}
  &\le \norm{\bG^{-1}} \norm{\bG\bA_j^T-\bI}\\
&\overset{\eqref{eq:linear-rate-Abel-identity}}{
  \le} \norm{\bG^{-1}}\left(\frac{\eta_j}{\eta_{j+1}} - 1\right)
  + \norm{\bG^{-1}}\frac{\eta_j}{\eta_{T+1}}
  \norm{\prod_{i=j+1}^{T+1} \bB_i}\\
  &\hspace{8em}+ \norm{\bG^{-1}}\sum_{k=j+1}^T \eta_{j}\left(\frac{1}{\eta_{k+1}
  } - \frac{1}{\eta_k}\right)
  \norm{\prod_{i=j+1}^{k} \bB_i}\\
  &\le C\norm{\bG^{-1}}\left\{j^{-1}
    +\exp\left(-\frac{a}{2}\sum_{i=j+1}^{T+1}\eta_i\right)\right\}\\
  &\quad+C\norm{\bG^{-1}}\delta_j\sum_{k=j+1}^T\eta_k
    \exp\left(-\frac{a}{2}\sum_{i=j+1}^k\eta_i\right)\\
  &\le C\left\{\delta_j+
    \exp\left(-\frac{a}{2}\sum_{i=j+1}^{T+1}\eta_i\right)\right\}.
\end{align*}
Here the last two inequalities use \eqref{eq:linear-rate-stable-product}, \eqref{eq:sum_abel_GA}, and \eqref{eq:rate-contraction-sums}.

To verify the weighted-sum bound, put \(v_k=\exp\{-(a/2)\sum_{i=j+1}^k\eta_i\}\). Then
\[
  v_{k-1}-v_k=v_k(e^{a\eta_k/2}-1)\ge(a/2)\eta_kv_k.
\]
Thus the forward weighted sum is uniformly bounded by telescoping.
It remains to sum the pointwise estimate.  Note that for \(s\ge 2\),
\(
  \sum_{j=1}^Tj^{-s(1-\alpha)}\le CT^\alpha
\).  For the terminal boundary layer,
\begin{equation}
\begin{split}
  \sum_{j=1}^T
  \exp\left(-c\sum_{i=j+1}^{T+1}\eta_i\right)
  &\le \eta_T^{-1}\sum_{j=1}^T\eta_j
  \exp\left(-c\sum_{i=j+1}^{T+1}\eta_i\right)\\
  &\le C\eta_T^{-1}\le CT^\alpha.
\end{split}
  \label{eq:linear-rate-reverse-exponential-sum}
\end{equation}
Indeed, with
$w_j=\exp\{-c\sum_{i=j+1}^{T+1}\eta_i\}$,
$w_j-w_{j-1}=w_j(1-e^{-c\eta_j})\gtrsim\eta_jw_j$, so the weighted sum in
\eqref{eq:linear-rate-reverse-exponential-sum} telescopes.  Applying
$(u+v)^s\le2^{s-1}(u^s+v^s)$ completes the proof.
\end{proof}

\subsection{Adaptive-block maximal inequality}
\label{app:proof-linear-rate-filter}

\begin{proof}[Proof of Lemma~\ref{lem:linear-rate-filter}]
Consider a constant $L\ge1$ such that
$Ce^{-c\eta2^{-\alpha}L}\le1/2$.
We divide the time horizon into several subintervals.  First, choose $t_0$
such that
\begin{equation}
  L s^{\alpha-1}+s^{-1}\le\frac12,
  \qquad s\ge t_0.
  \label{eq:linear-rate-block-comparability-condition}
\end{equation}
It is easy to see that \(t_0\) is a finite number and 
will not increase with \(T\). Then, we adopt the notation
\(y_* \coloneqq \max_{1\le t\le t_0}\norm{\by_t / \eta_t}\).
 This is the maximum of
finitely many linear combinations of the $\bepsilon_t$, so
$\E\!\left[ y_*^p\right]\le C$ and
\begin{equation}
  \Pp( y_*>\sqrt T x)\le CT^{-p/2}x^{-p}.
  \label{eq:linear-rate-prefix-tail}
\end{equation}
For \(T+1\le t_0\), the desired bound follows from~\eqref{eq:linear-rate-prefix-tail}, after increasing the constant. We may therefore assume \(T+1>t_0\) and inductively construct blocks from $s_0=t_0$ by
\begin{equation}
  s_{\ell+1}
  =\min\{T+1,s_\ell+\lceil L s_\ell^\alpha\rceil\}.
  \label{eq:cstct_s_l}
\end{equation}
Write $h_\ell=s_{\ell+1}-s_\ell$ and let $N_T$ be the number of blocks. 
Simple computation gives
\begin{equation}
  N_T\le CT^{1-\alpha},
  \qquad
  \max_{\ell<N_T}h_\ell\le CT^\alpha,
  \qquad
  \sum_{\ell<N_T}h_\ell\le T+1.
  \label{eq:linear-rate-block-counts}
\end{equation}
On a nontruncated block starting at $s=s_\ell$,
\eqref{eq:linear-rate-block-comparability-condition} implies
$s_{\ell+1}\le2s$ and hence
\begin{equation*}
  \sum_{t=s_\ell}^{s_{\ell+1}-1}\eta_t
  \ge c\,h_\ell s_\ell^{-\alpha}
  \overset{\eqref{eq:cstct_s_l}}{\ge} cL.
\end{equation*}
Together with Lemma~\ref{lem:rate-deterministic-products},
 this gives the uniform
contraction
\begin{equation}
  \frac{\eta_{s_\ell}}{
    \eta_{s_{\ell+1}}}\left\|\prod_{t=s_\ell}^{s_{\ell+1}-1}\bB_t\right\|
  \le Ce^{-cL} \le\frac12.
  \label{eq:linear-rate-block-contraction}
\end{equation}
It is worth emphasizing that the terminal block,
 which may be truncated by $T+1$, will not use this
contraction.

For $s_\ell\le k<s_{\ell+1}$, iteration of
\(\by_k\) yields
\begin{align*}
   \by_{k+1}/\eta_{k+1}&=
   \frac{\eta_{s_\ell}}{\eta_{k+1}}\left(\prod_{i=s_\ell}^k \bB_i\right)
   (\by_{s_\ell}/\eta_{s_\ell})
  +\sum_{j=s_\ell}^k \frac{\eta_j}{\eta_{k+1}}\left(\prod_{i=j+1}^k
   \bB_i\right)\bepsilon_j\\
  &\overset{\text{(a)}}{=} \frac{\eta_{s_\ell}}{\eta_{k+1}}
   \left(\prod_{i=s_\ell}^k \bB_i\right)
   (\by_{s_\ell}/\eta_{s_\ell})
   + \sum_{j=s_\ell}^k \frac{\eta_j}{\eta_{j+1}}\bepsilon_j\\
   &\quad + \sum_{j=s_{\ell}}^{k-1}\left[
    \frac{\eta_{j+1}}{\eta_{k+1}}\left(\prod_{i=j+1}^k \bB_i \right)
    - \frac{\eta_{j+2}}{\eta_{k+1}}\left(\prod_{i=j+2}^k \bB_i\right)
   \right]\left\{\sum_{i= s_{\ell}}^j \frac{\eta_i}{\eta_{i+1}}
   \bepsilon_i\right\}\\
  &= \frac{\eta_{s_\ell}}{\eta_{k+1}}\left(\prod_{i=s_\ell}^k \bB_i\right)
   (\by_{s_\ell}/\eta_{s_\ell})
   + \sum_{j=s_\ell}^k \frac{\eta_j}{\eta_{j+1}}\bepsilon_j\\
   &\quad + \sum_{j=s_{\ell}}^{k-1}
   \left( \frac{\eta_{j+1}}{\eta_{j+2}}(\bI - \eta_{j+1}\bG) - \bI \right)
   \frac{\eta_{j+2}}{\eta_{k+1}}\left(\prod_{i=j+2}^k \bB_i\right)
   \left\{\sum_{i= s_{\ell}}^j \frac{\eta_i}{\eta_{i+1}}
   \bepsilon_i\right\}
\end{align*}
Step (a) uses Abel summation.
For simplicity, we define
\[
\widehat{U}_{\ell} \coloneqq \max_{s_\ell\le k<s_{\ell+1}}
\bigg\|\sum_{j=s_\ell}^k (\eta_j/\eta_{j+1})\bepsilon_j\bigg\|.
\]
Taking norms on both sides of the above equation yields
\begin{align*}
  \norm{\by_{k+1}/\eta_{k+1}} &\le
  \frac{\eta_{s_\ell}}{\eta_{k+1}}\left\|\prod_{i=s_\ell}^k \bB_i\right\|
   \norm{\by_{s_\ell}/\eta_{s_\ell}}
   + \norm{\sum_{j=s_\ell}^k \frac{\eta_j}{\eta_{j+1}}\bepsilon_j}\\
   &\quad + \sum_{j=s_{\ell}}^{k-1}
   \left\| \frac{\eta_{j+1}}{\eta_{j+2}}(\bI - \eta_{j+1}\bG) - \bI \right\|
   \frac{\eta_{j+2}}{\eta_{k+1}}\left\|\prod_{i=j+2}^k \bB_i\right\|\times
   \left\|\sum_{i= s_{\ell}}^j \frac{\eta_i}{\eta_{i+1}}
   \bepsilon_i\right\|\\
   &\le \frac{\eta_{s_\ell}}{\eta_{k+1}}\left\|\prod_{i=s_\ell}^k
    \bB_i\right\|
   \norm{\by_{s_\ell}/\eta_{s_\ell}} + \widehat{U}_\ell
    + C \widehat{U}_\ell \sum_{j=s_\ell}^{k-1}
    \eta_{j+1}\frac{\eta_{j+2}}{\eta_{k+1}}
   \left\|\prod_{i=j+2}^k \bB_i\right\|\\
   &\le \frac{\eta_{s_\ell}}{\eta_{k+1}}\left\|\prod_{i=s_\ell}^k
    \bB_i\right\|
   \norm{\by_{s_\ell}/\eta_{s_\ell}}
   + C' \widehat{U}_\ell.
\end{align*}
Here the penultimate inequality uses
\[
 \left\|\frac{\eta_{j+1}}{\eta_{j+2}}\bB_{j+1}-\bI\right\|
 \le C(j^{-1}+\eta_{j+1})\le C\eta_{j+1}.
\]
For the last inequality, block comparability gives \(k+1\le2s_\ell\), uniformly bounded step-size ratios, and \(h_\ell\le C_Ls_\ell^\alpha\). Lemma~\ref{lem:rate-deterministic-products} bounds the product norms uniformly, and hence
\[
 \sum_{j=s_\ell}^{k-1}\eta_{j+1}
 \frac{\eta_{j+2}}{\eta_{k+1}}
 \left\|\prod_{i=j+2}^k\bB_i\right\|
 \le C h_\ell s_\ell^{-\alpha}\le C_L.
\]
The constant \(L\) is fixed, so these constants do not depend on \(T\).
The following endpoint bound holds on complete blocks, whereas the interior bound also holds on the possibly truncated terminal block:
\begin{subequations}
\begin{equation}
  \| \by_{s_{\ell + 1}}/ \eta_{s_{\ell+1}}\|
  \overset{
    \eqref{eq:linear-rate-block-contraction}
    }{\le} \frac{1}{2}\| \by_{s_\ell}/\eta_{s_\ell}\|+C \widehat{U}_\ell,
  \label{eq:linear-rate-block-endpoint}
\end{equation}
\begin{equation}
  \max_{s_\ell + 1\le k \le s_{\ell+1}}\| \by_k/\eta_k\|
  \le C\| \by_{s_\ell}/\eta_{s_\ell}\|+C \widehat{U}_\ell.
  \label{eq:linear-rate-block-interior}
\end{equation}  
\end{subequations}
Taking $C\ge1$, induction over the complete blocks gives, for every block starting point,
\begin{equation}
\| \by_{s_\ell}/ \eta_{s_\ell}\| \le 2C \left\{\max_{0\le m < N_T}
\widehat{U}_m + y_*\right\}
\label{eq:y_s_ell-unif-bd}
\end{equation}
and hence the uniform bound
\begin{align}
  \max_{1\le k\le T+1}\| \by_k/\eta_k\|
  &= \max\left\{y_*,\,
    \max_{\ell < N_T}\max_{s_\ell +1 \le k \le s_{\ell +1}}
    \|\by_k / \eta_k\|\right\}\notag\\
  &\le \max\left\{y_*,\,C\max_{\ell < N_T}\left\{
    \|\by_{s_\ell}/ \eta_{s_\ell}\| + \widehat{U}_\ell
  \right\}\right\}\notag\\
  &\overset{\eqref{eq:y_s_ell-unif-bd}}{\le}
   C'\Big\{ y_*+ \max_{\ell<N_T}\widehat{U}_\ell\Big\},
  \label{eq:linear-rate-filter-global-max}
\end{align}
where $C'=1+2C^2+C$ is independent of $T$. The initial maximum $y_*$ is retained throughout because the block estimates control only times after $t_0$.

It remains to bound $\max_{\ell < N_T}\widehat{U}_\ell$.
For each coordinate vector $\mathbf e_r$, conditional
Jensen's inequality and \eqref{eq:linear-rate-filter-moment} give, almost surely,
\begin{align*}
  \sum_{j=s_\ell}^{s_{\ell+1}-1}
  \E\!\left[\big(\mathbf e_r^\top (\eta_j/\eta_{j+1})\bepsilon_j
  \big)^2\mid\cF_{j-1}\right]
  &\le C(M_\epsilon,p)h_\ell,\\
  \sum_{j=s_\ell}^{s_{\ell+1}-1}
  \E\!\left[\big|\mathbf e_r^\top
  (\eta_j/\eta_{j+1})\bepsilon_j\big|^p\mid\cF_{j-1}\right]
  &\le C(M_\epsilon,p)h_\ell.
\end{align*}
Apply Lemma~\ref{lem:rate-fuk-nagaev} to each coordinate and use
$\|\bz\|\le\sqrt d\max_r|z_r|$.  Since $d$ is fixed, a union bound yields
\begin{equation*}
  \Pp\big(\widehat{U}_\ell>y\big)
  \le C\exp(-cy^2/h_\ell)+Ch_\ell y^{-p},
\end{equation*}
where \(C\) may depend on \(M_\epsilon, d\) and \(p\).
Set $y=\sqrt T x/(2C')$, use~\eqref{eq:linear-rate-prefix-tail},
and take a union bound in~\eqref{eq:linear-rate-filter-global-max}.  
The three relations in
\eqref{eq:linear-rate-block-counts} give
\begin{align*}
  \Pp\!\left(\max_{t\le T}\|\by_{t+1}/\eta_{t+1}\|>\sqrt T x\right)
  &\le \Pp\Big(C'y_* + C' \max_{\ell < N_T}
  \widehat{U}_\ell > \sqrt{T}x\Big)\\
  &\le \Pp\big(y_* > \sqrt{T}x/(2C')\big) +
  \sum_{\ell<N_T} \Pp\big(\widehat{U}_\ell > \sqrt{T}x/(2C')\big)\\
  &\le CT^{-p/2}x^{-p}
  +C\sum_{\ell<N_T}
  \left\{e^{-cTx^2/h_\ell}+h_\ell T^{-p/2}x^{-p}\right\}\\
  &\le CT^{1-\alpha}e^{-cT^{1-\alpha}x^2}
  +CT^{1-p/2}x^{-p}.
\end{align*}
This concludes the proof of the lemma.
\end{proof}

\subsection{Affine scalar martingale functional approximation}
\label{app:proof-rate-leading-mds-coupling}

\begin{proof}[Proof of Lemma~\ref{lem:rate-leading-mds-coupling}]
Write \(S_k=\sum_{j=1}^kY_j\), with \(S_0=0\).  If $\sigma^2=0$, the
terminal case of
\eqref{eq:rate-leading-scalar-covariance} and nonnegativity of conditional
variances give
\[
  \E\!\left[\frac1T\sum_{j=1}^T
      \E\!\left[Y_j^2\mid\cF_{j-1}\right]\right]
  \le KT^{-3\gamma}.
\]
Convexity of the absolute value on each affine interpolation interval gives
\[
  \tnorm{\mathcal S_T}
  =\frac1{\sqrt T}\max_{0\le k\le T}|S_k|.
\]
Consequently, Doob's $L^2$ inequality yields
\[
  \Pp\!\left(\tnorm{\mathcal S_T}>x\right)
  \le \frac{4\E\!\left[S_T^2\right]}{Tx^2}
  =\frac4{x^2}\E\!\left[\frac1T\sum_{j=1}^T
       \E\!\left[Y_j^2\mid\cF_{j-1}\right]\right]
  \le CT^{-3\gamma}x^{-2}.
\]
Taking $x=C_0T^{-\gamma}$, with $C_0$ sufficiently large, in
\eqref{eq:rate-coupling-prokhorov} proves
\eqref{eq:rate-leading-mds-coupling} when the Gaussian target is the zero
path, even without the logarithmic factor.  Hence assume below that
\(\sigma^2>0\).

Set
\(
  v_j\coloneqq\E\!\left[Y_j^2\mid\cF_{j-1}\right]
\)
and apply Lemma~\ref{lem:rate-scott-huggins-embedding}, using \(r=p\) in its
moment comparison.  On a complete extension of the probability space, the
lemma supplies a standard Brownian motion \(W\), stopping times
\(0=\tau_0\le\tau_1\le\cdots\), and a filtration
\((\mathcal G_j)_{j\ge0}\) such that, with
\(\rho_j\coloneqq\tau_j-\tau_{j-1}\),
\begin{align}
  W(\tau_k)&=S_k, \qquad k\ge0,
  \label{eq:rate-embedded-partial-sums}\\
  \E\!\left[\rho_j\mid\mathcal G_{j-1}\right]
  &=v_j,
  \label{eq:rate-embedded-time-mean}\\
  c_p\E\!\left[\rho_j^{p/2}\mid\mathcal G_{j-1}\right]
  &\le \E\!\left[|Y_j|^p\mid\cF_{j-1}\right]
  \le C_p\E\!\left[\rho_j^{p/2}\mid\mathcal G_{j-1}\right]
  \quad\text{almost surely}.
  \label{eq:rate-embedded-time-moment-comparison}
\end{align}
Moreover, \(\cF_{j-1}\subseteq\mathcal G_{j-1}\), \(\rho_j\) is
\(\mathcal G_j\)-measurable, and the extension leaves the law of
\((S_k)_{k\ge0}\) unchanged.  
Define the rescaled Brownian motion
\[
  \widetilde W_T(s)
  \coloneqq T^{-1/2}W(Ts), \qquad s\ge0,
\]
and the typical event
\begin{equation*}
  \mathcal E_T
  \coloneqq\left\{
    \max_{0\le k\le T}
    \left|\frac{\tau_k}{T}-\frac{k}{T}\sigma^2\right|
    \le2T^{-2\gamma}
  \right\}.
\end{equation*}
Put
\[
  \delta_T\coloneqq(2+\sigma^2)T^{-2\gamma},
  \qquad H\coloneqq\sigma^2+2.
\]
We first compare the two entire paths on \(\mathcal E_T\). The probability
of this event will be estimated afterward.  Given \(r\in[0,1]\), set
\[
  k\coloneqq\min\{\lfloor Tr\rfloor,T-1\},
  \qquad \lambda\coloneqq Tr-k.
\]
Then \(k\in\{0,\ldots,T-1\}\), \(0\le\lambda\le1\), and this convention
also covers \(r=1\), for which \(k=T-1\) and \(\lambda=1\).  By the affine
definition of \(\mathcal S_T\) and the embedding identity
\eqref{eq:rate-embedded-partial-sums},
\begin{equation}
  \mathcal S_T(r)
  =(1-\lambda)\widetilde W_T(\tau_k/T)
    +\lambda \widetilde W_T(\tau_{k+1}/T).
  \label{eq:rate-embedded-affine-representation}
\end{equation}
For each \(m\in\{k,k+1\}\), the definition of \(\mathcal E_T\) gives
\begin{align*}
  \left|\frac{\tau_m}{T}-\sigma^2r\right|
  &\le
  \left|\frac{\tau_m}{T}-\frac{m}{T}\sigma^2\right|
  +\sigma^2\left|\frac mT-r\right|
  \le2T^{-2\gamma}+\frac{\sigma^2}{T}
  \le\delta_T,
\end{align*}
where the last inequality uses \(2\gamma<1\).  On the same event,
\(\tau_m/T\le\sigma^2+2=H\), while \(0\le\sigma^2r\le H\).  Subtracting
\(\widetilde W_T(\sigma^2r)\) from
\eqref{eq:rate-embedded-affine-representation} and using
\(0\le\lambda\le1\) therefore gives
\begin{align*}
  \left|\mathcal S_T(r)-\widetilde W_T(\sigma^2r)\right|
  &\le(1-\lambda)
    \left|\widetilde W_T(\tau_k/T)
      -\widetilde W_T(\sigma^2r)\right|
  +\lambda
    \left|\widetilde W_T(\tau_{k+1}/T)
                 -\widetilde W_T(\sigma^2r)\right|\\
  &\le
  \sup_{\substack{s,t\in[0,H]\\|s-t|\le\delta_T}}
    |\widetilde W_T(t)-\widetilde W_T(s)|.
\end{align*}
Taking the supremum over \(r\in[0,1]\) proves the pathwise bound
\begin{equation}
  \tnorm{\mathcal S_T-\widetilde W_T(\sigma^2\,\cdot)}
  \le
  \sup_{\substack{s,t\in[0,H]\\|s-t|\le\delta_T}}
    |\widetilde W_T(t)-\widetilde W_T(s)|
  \qquad\text{on }\mathcal E_T.
  \label{eq:rate-embedded-pathwise-modulus}
\end{equation}

The discrepancy between the embedded Brownian time \(\tau_k\) and
\(k\sigma^2\) has two distinct sources.  We first control the
centered embedding fluctuation
\(\sum_{j=1}^k(\rho_j-v_j)\), whose tail is governed by the conditional
\(p\)th-moment bound.  Since \(v_j\) is
\(\mathcal G_{j-1}\)-measurable,
\eqref{eq:rate-embedded-time-mean} shows directly that
\((\rho_j-v_j,\mathcal G_j)_{j\ge1}\) is a martingale-difference sequence.
By \eqref{eq:rate-embedded-time-moment-comparison}, conditional Jensen's
inequality, and \eqref{eq:rate-leading-mds-moment},
\begin{align}
  \E\!\left[|\rho_j-v_j|^{p/2}\mid\mathcal G_{j-1}\right]
  &\le2^{p/2-1}\left\{
    \E\!\left[\rho_j^{p/2}\mid\mathcal G_{j-1}\right]+v_j^{p/2}
  \right\}\notag\\
  &\le C\E\!\left[|Y_j|^p\mid\cF_{j-1}\right]
  \le CM_Y
  \quad\text{almost surely}.
  \label{eq:rate-embedded-time-p-half-moment}
\end{align}
Here
\(v_j^{p/2}\le\E\!\left[|Y_j|^p\mid\cF_{j-1}\right]\) is precisely the
conditional Jensen step.  If \(p>4\), conditional Lyapunov's inequality
also gives
\begin{equation}
  \E\!\left[(\rho_j-v_j)^2\mid\mathcal G_{j-1}\right]
  \le
  \left\{
    \E\!\left[|\rho_j-v_j|^{p/2}\mid\mathcal G_{j-1}\right]
  \right\}^{4/p}
  \le C
  \quad\text{almost surely}.
  \label{eq:rate-embedded-time-variance}
\end{equation}

Let \(a_T=T^{-2\gamma}\) and
\(x_T=Ta_T=T^{1-2\gamma}\).  If \(2<p\le4\), Markov's inequality,
Lemma~\ref{lem:rate-subquadratic-bdg}, and
\eqref{eq:rate-embedded-time-p-half-moment} imply
\begin{align*}
  \Pp\!\left(
    \max_{k\le T}\left|\sum_{j=1}^k(\rho_j-v_j)\right|>x_T
  \right)
  &\le x_T^{-p/2}
    \E\!\left[
      \max_{k\le T}\left|\sum_{j=1}^k(\rho_j-v_j)\right|^{p/2}
    \right]\\
  &\le CTx_T^{-p/2}
   =CT^{1-p/2+p\gamma}
   \le CT^{-\gamma}.
\end{align*}
The last inequality is equivalent to
\(\gamma\le(p-2)/\{2(p+1)\}\).  If \(p>4\), apply
Lemma~\ref{lem:rate-fuk-nagaev} with moment exponent \(p/2\), predictable
variance bound \(CT\), and conditional \(p/2\)-moment bound \(CT\).  By
\eqref{eq:rate-embedded-time-p-half-moment} and
\eqref{eq:rate-embedded-time-variance},
\begin{align*}
  \Pp\!\left(
    \max_{k\le T}\left|\sum_{j=1}^k(\rho_j-v_j)\right|>x_T
  \right)
  &\le C\exp\{-cx_T^2/T\}+CTx_T^{-p/2}\\
  &\le C\exp\{-cT^{1-4\gamma}\}
       +CT^{1-p/2+p\gamma}
  \le CT^{-\gamma}.
\end{align*}
Here the exponential term is \(O(T^{-\gamma})\) because \(\gamma<1/4\),
and the polynomial term is controlled by the same restriction
\(\gamma\le(p-2)/\{2(p+1)\}\).  Thus, in both moment regimes,
\begin{equation}
  \Pp\!\left(
    \max_{k\le T}\left|\sum_{j=1}^k(\rho_j-v_j)\right|
    >T^{1-2\gamma}
  \right)
  \le CT^{-\gamma}.
  \label{eq:rate-embedded-time-fluctuation-tail}
\end{equation}

We next control the predictable conditional-covariance error
\(\sum_{j=1}^kv_j-k\sigma^2\).  The conditional-covariance bound
\eqref{eq:rate-leading-scalar-covariance} and Markov's inequality give
\begin{equation}
  \Pp\!\left(
    \max_{k\le T}\left|
      \frac1T\sum_{j=1}^kv_j-\frac{k}{T}\sigma^2
    \right|>T^{-2\gamma}
  \right)
  \le T^{2\gamma}\,KT^{-3\gamma}
  =KT^{-\gamma}.
  \label{eq:rate-conditional-covariance-tail}
\end{equation}
Since \(\tau_k=\sum_{j=1}^k\rho_j\), the exact decomposition
\begin{equation*}
  \frac{\tau_k}{T}-\frac{k}{T}\sigma^2
  =\frac1T\sum_{j=1}^k(\rho_j-v_j)
   +\left\{\frac1T\sum_{j=1}^kv_j-\frac{k}{T}\sigma^2\right\}
\end{equation*}
and a union bound using
\eqref{eq:rate-embedded-time-fluctuation-tail}--
\eqref{eq:rate-conditional-covariance-tail} show that
\begin{equation}
  \Pp(\mathcal E_T^c)\le CT^{-\gamma}.
  \label{eq:rate-embedded-time-good-event}
\end{equation}

Finally, \(\widetilde W_T\) is itself a standard Brownian motion,
and the path \(r\mapsto \widetilde W_T(\sigma^2r)\) has the law of
\(\sigma W_\star^{(1)}\).  Take
\(u_T=L T^{-\gamma}\sqrt{\log T}\).  For all sufficiently large \(T\),
\(\delta_T\le1\), so Lemma~\ref{lem:rate-brownian-modulus} gives
\begin{align*}
  &\Pp\!\left(
    \sup_{\substack{s,t\in[0,H]\\|s-t|\le\delta_T}}
    |\widetilde W_T(t)-\widetilde W_T(s)|>u_T
  \right)\\
  &\qquad\le
  C_H\delta_T^{-1}\exp\{-u_T^2/(C_H\delta_T)\}
  \le CT^{2\gamma-cL^2}
  \le CT^{-\gamma},
\end{align*}
where the last inequality follows by choosing the fixed constant \(L\)
sufficiently large.  Combining this estimate with
\eqref{eq:rate-embedded-pathwise-modulus} and
\eqref{eq:rate-embedded-time-good-event} yields
\[
  \Pp\!\left(
    \tnorm{\mathcal S_T-\widetilde W_T(\sigma^2\,\cdot)}>u_T
  \right)
  \le CT^{-\gamma}.
\]
Combining this estimate with the definition of Prokhorov distance yields
\eqref{eq:rate-leading-mds-coupling}.
\end{proof}

\section{Proofs of the lower bounds}

\subsection{Lower bound from conditional-covariance stabilization}
\label{app:rate-covariance-stabilization-lower}

\begin{proof}[Proof of Theorem~\ref{thm:rate-covariance-stabilization-lower}]
Put \(\gamma=\rateCov\).
Fix a construction horizon \(T\), and reserve \(m\)
for an arbitrary horizon in the uniform covariance-stabilization condition.
Let \(W\) be a standard Brownian motion with its augmented
natural filtration.
Put
\[
  h_T\coloneqq\frac14T^{1-2\gamma},
  \qquad
  \tau_T\coloneqq
  \inf\{s\ge T-h_T:W(s)=0\}\wedge T.
\]
For \(j\ge1\), let
\[
  \epsilon_j^{(T)}
  \coloneqq
  \begin{cases}
    W(j\wedge\tau_T)-W((j-1)\wedge\tau_T),
      &1\le j\le T,\\
    W(j)-W(j-1),&j>T,
  \end{cases}
\]
Let \(\mathbb P_{\mathcal M_T}\) be the Brownian probability law, let
\(\mathcal F_j^{\mathcal M_T}\) be the augmented Brownian filtration at time
\(j\), and set
\(\epsilon_j^{\mathcal M_T}\coloneqq\epsilon_j^{(T)}\) and
\(g_{\mathcal M_T}(x)=x\).  These objects define the stochastic oracle
\(\mathcal M_T\).  Let
\(\mathfrak C_{\mathrm{cov}}=\{\mathcal M_T:T\ge1\}\).
Exact linearity verifies Assumption~\ref{ass:rate-linear-dynamics}.

For \(j\le T\), the martingale-difference property follows by applying
optional stopping over the bounded interval \([j-1,j]\). For \(j>T\), it
follows from independent Brownian increments.  Moreover, for \(j\le T\),
\[
  |\epsilon_j^{\mathcal M_T}|
  \le
  \sup_{0\le u\le1}|W(j-1+u)-W(j-1)|,
\]
and the same bound is immediate for \(j>T\).  Since the future Brownian
increment process is independent of \(\mathcal F_{j-1}^{\mathcal M_T}\),
\[
  \E_{\mathcal M_T}\!\left[
    |\epsilon_j^{\mathcal M_T}|^p
    \mid\mathcal F_{j-1}^{\mathcal M_T}
  \right]
  \le
  \E_{\mathcal M_T}\!\left[\sup_{0\le u\le1}|W(u)|^p\right]
  \eqqcolon M_p<\infty,
\]
uniformly in \(j\) and \(T\).
Thus Assumption~\ref{ass:rate-martingale-noise} holds uniformly over
\(\mathfrak C_{\mathrm{cov}}\).

We next verify Assumption~\ref{ass:rate-covariance-stabilization}.  To this
end, fix an arbitrary auxiliary horizon \(m\ge2\).  For \(j\le T\),
conditional It\^o isometry for the stopped Brownian martingale gives
\[
  \E_{\mathcal M_T}\!\left[
    (\epsilon_j^{\mathcal M_T})^2
    \mid\mathcal F_{j-1}^{\mathcal M_T}
  \right]
  =
  \E_{\mathcal M_T}\!\left[
    (j\wedge\tau_T)-((j-1)\wedge\tau_T)
    \mid\mathcal F_{j-1}^{\mathcal M_T}
  \right].
\]
For \(j>T\), the same conditional second moment equals $1$.  Hence
\(0\le
\E_{\mathcal M_T}[(\epsilon_j^{\mathcal M_T})^2
\mid\mathcal F_{j-1}^{\mathcal M_T}]\le1\) for every \(j\ge1\), and
the conditional-covariance error in
Assumption~\ref{ass:rate-covariance-stabilization} satisfies
\begin{equation}
\begin{aligned}
  &\E_{\mathcal M_T}\!\left[
    \max_{0\le n\le m}
    \left|
      \frac1m\sum_{j=1}^n
        \E_{\mathcal M_T}\!\left[
          (\epsilon_j^{\mathcal M_T})^2
          \mid\mathcal F_{j-1}^{\mathcal M_T}
        \right]
      -\frac nm
    \right|
  \right]\\
  &\quad=
  \frac1m\sum_{j=1}^{m\wedge T}
  \E_{\mathcal M_T}\!\left[
    1-\E_{\mathcal M_T}\!\left[
      (\epsilon_j^{\mathcal M_T})^2
      \mid\mathcal F_{j-1}^{\mathcal M_T}
    \right]
  \right]
  =\frac1m\E_{\mathcal M_T}\!\left[(m\wedge T-\tau_T)_+\right].
\end{aligned}
  \label{eq:rate-covariance-lower-all-horizons}
\end{equation}

It remains to estimate the right-hand side of
\eqref{eq:rate-covariance-lower-all-horizons}. Let
\[
  \ell_t\coloneqq\sup\{0\le s\le t:W(s)=0\}.
\]
From the definition of \(\tau_T\), we can get
\begin{equation}
\begin{aligned}
  \mathbb P_{\mathcal M_T}\!\left(\tau_T<T\right)
  &= \mathbb P_{\mathcal M_T}\!(\ell_T > T- h_T)\\
  &\overset{\text{(a)}}{=}1-\frac2\pi\arcsin\sqrt{\frac{T-h_T}{T}}
  =\frac2\pi\arcsin\sqrt{\frac{h_T}{T}}
   =\frac1\pi T^{-\gamma}
     \{1+O(T^{-2\gamma})\}.
\end{aligned}
  \label{eq:rate-covariance-lower-hitting-probability}
\end{equation}
Here (a) follows from  L\'evy's last-zero arcsine law
\cite[Theorem~5.26(a)]{morters2010brownian},
\[
  \mathbb P_{\mathcal M_T}\!\left(\ell_t\le x\right)
  =\frac2\pi\arcsin\sqrt{\frac xt},
  \qquad 0\le x\le t,
\]
and the last equality holds by using the definition of \(h_T\) and
the exact formulation of \(\arcsin\) function.
The layer-cake formula and
\eqref{eq:rate-covariance-lower-hitting-probability} give
\begin{align}
  \E_{\mathcal M_T}(T-\tau_T)
  &=\int_0^{h_T}
    \mathbb P_{\mathcal M_T}\!\left(
      \tau_T<T-h_T+u
    \right)\,\mathrm du\notag\\
  &=\frac2\pi\int_0^{h_T}
      \arctan\sqrt{\frac{u}{T-h_T}}\,\mathrm du\notag\\
  &=\frac2\pi
    \left\{
      T\arctan\sqrt{\frac{h_T}{T-h_T}}
      -\sqrt{(T-h_T)h_T}
    \right\}\notag\\
  &=\frac1{6\pi}T^{1-3\gamma}
    \{1+O(T^{-2\gamma})\}.
  \label{eq:rate-covariance-lower-expected-loss}
\end{align}

Using these results, we can now analyze
the right-hand side of \eqref{eq:rate-covariance-lower-all-horizons}
exactly. Since \(T-h_T\ge3T/4\), there
are three cases.  If
\(m\le T-h_T\), then \(\tau_T\ge T-h_T\), so the right-hand side of
\eqref{eq:rate-covariance-lower-all-horizons} is zero.  If
\(T-h_T<m\le T\), then \eqref{eq:rate-covariance-lower-expected-loss} gives
\[
  \frac1m\E_{\mathcal M_T}\!\left[(m-\tau_T)_+\right]
  \le
  \frac1m\E_{\mathcal M_T}(T-\tau_T)
  \le CT^{-3\gamma}
  \le Cm^{-3\gamma}.
\]
Finally, if \(m\ge T\), then, because \(1-3\gamma\ge0\),
\(
  \frac1m\E_{\mathcal M_T}(T-\tau_T)
  \le C\frac{T^{1-3\gamma}}m
  \le Cm^{-3\gamma}
\).
Thus Assumption~\ref{ass:rate-covariance-stabilization}
holds uniformly in \(m\) and
\(T\), with a common constant \(C_S\).
Moreover, \cite[Corollary~2.5]{dereich2019general} verifies
Assumption~\ref{ass:rate-trajectory-stability} uniformly over
\(\mathfrak C_{\mathrm{cov}}\).

For this oracle, the terminal value of \(\bmM_T^{\mathcal M_T}\) telescopes to
\(
  \bmM_T^{\mathcal M_T}(1)=\frac{W(\tau_T)}{\sqrt T}
\).
On \(\{\tau_T<T\}\), this value is zero.  Hence
\eqref{eq:rate-covariance-lower-hitting-probability} implies that, for some
constant \(c_0>0\) and all sufficiently large \(T\),
\begin{equation}
  \mathbb P_{\mathcal M_T}\!\left(\bmM_T^{\mathcal M_T}(1)=0\right)
  \ge c_0T^{-\gamma}.
  \label{eq:rate-covariance-lower-atom}
\end{equation}

Consider the closed set
\(
  \Gamma\coloneqq
  \{x\in D([0,1],\R):x(1)=0\}
\).
On \(\{\tau_T<T\}\), \(\bmM_T^{\mathcal M_T}\in\Gamma\).
For its \(\delta\)-expansion in
Definition~\ref{def:rate-prokhorov}, note that
every increasing time change in \(\Lambda\) fixes
the endpoint \(1\).  Therefore,
Definition~\ref{def:rate-j1-metric} gives
\[
  d_{\mathrm S}(x,y)\ge|x(1)-y(1)|,
  \qquad
  \Gamma^\delta
  \subseteq\{y:|y(1)|<\delta\}.
\]
Since \(W_\star^{(1)}(1)\) is standard Gaussian,
\[
  \Pp\!\left(W_\star^{(1)}\in\Gamma^\delta\right)
  \le
  \Pp\!\left(|W_\star^{(1)}(1)|<\delta\right)
  \le\sqrt{\frac2\pi}\,\delta.
\]
Set
\(
  \delta_T
  \coloneqq
  \frac{c_0}{2\{1+\sqrt{2/\pi}\}}T^{-\gamma}
\).
Then \eqref{eq:rate-covariance-lower-atom} gives, for all sufficiently large
\(T\),
\[
  \mathbb P_{\mathcal M_T}\!\left(\bmM_T^{\mathcal M_T}\in\Gamma\right)
  >
  \Pp\!\left(W_\star^{(1)}\in\Gamma^{\delta_T}\right)+\delta_T.
\]
Thus the defining Prokhorov inequality fails at radius \(\delta_T\), and
\(
  d_{\mathrm P}\!\left(\bmM_T^{\mathcal M_T},W_\star^{(1)}\right)
  \ge\delta_T
  \asymp T^{-\gamma}
\).

It remains to transfer the lower bound to the full SA path.  In the decomposition
\eqref{eq:rate-four-remainder-pathwise}, exact linearity and zero
initialization give \(\bpsi_{0,T}=\bpsi_{1,T}=0\), while the leading process is
\(\bmM_T^{\mathcal M_T}\).  Lemma~\ref{lem:rate-probability-radius-calculus}
together with \eqref{eq:linear-rate-R2-ky-fan} and
\eqref{eq:linear-rate-R3-ky-fan}, used at moment order \(p\), gives
\[
  d_{\mathrm P}\!\left(
    \Phi_T^{\mathcal M_T},\bmM_T^{\mathcal M_T}
  \right)
  \lesssim
  T^{-\rateSm}\sqrt{\log T}
  +T^{-\frac{p/2-\alpha}{p+1}}
  +T^{-\rateOsc}
  =o(T^{-\gamma}).
\]
Here
\(
  \frac{p/2-\alpha}{p+1}
  =\rateOsc+\frac{1-\alpha}{p+1}
  >\rateOsc
\).
Combining this estimate with the leading-process lower bound and the triangle
inequality yields
\[
  d_{\mathrm P}\!\left(
    \Phi_T^{\mathcal M_T},W_\star^{(1)}
  \right) \ge
  d_{\mathrm P}\!\left(
    \bmM_T^{\mathcal M_T},W_\star^{(1)}
  \right)
  -d_{\mathrm P}\!\left(
    \Phi_T^{\mathcal M_T},\bmM_T^{\mathcal M_T}
  \right)
  \gtrsim T^{-\gamma}.
\]
Since \(\gamma=\rateCov\) and
\(\mathcal M_T\in\mathfrak C_{\mathrm{cov}}\), this proves
\eqref{eq:rate-covariance-stabilization-lower}.
\end{proof}

\subsection[Finite-moment lower bound for the linear-SA path]
{Finite-moment lower bound for the linear-SA path}
\label{app:rate-iid-leading-lower}

\begin{proof}[Proof of Theorem~\ref{thm:rate-iid-leading-minimax}]
Fix $M>1$.  Choose constants $K_*>4$ and $\kappa_*>0$, and set
\begin{equation*}
  \varepsilon_T=c_*T^{-\frac{p-2}{2(p+1)}},
  \quad
  q_T=\frac{\kappa_*\varepsilon_T}{T},
  \quad
  u_T=K_*\varepsilon_T\sqrt T,
  \quad
  v_T^2=\frac{1-q_Tu_T^2}{1-q_T},
\end{equation*}
where $c_*>0$ will be chosen below.  For every sufficiently large \(T\), let
\(\mathcal M_T\) be the exact-linear oracle with
\(g_{\mathcal M_T}(x)=x\) and i.i.d.\ noise
\((\epsilon_j^{\mathcal M_T})_{j\ge1}\), adapted to its natural filtration
\((\mathcal F_j^{\mathcal M_T})_{j\ge0}\), such that
\begin{align*}
  \Pp_{\mathcal M_T}(\epsilon_j^{\mathcal M_T}=u_T)
  =\Pp_{\mathcal M_T}(\epsilon_j^{\mathcal M_T}=-u_T)
  &=\frac{q_T}{2},\\
  \Pp_{\mathcal M_T}(\epsilon_j^{\mathcal M_T}=v_T)
  =\Pp_{\mathcal M_T}(\epsilon_j^{\mathcal M_T}=-v_T)
  &=\frac{1-q_T}{2}.
\end{align*}
Then
\(\E_{\mathcal M_T}[\epsilon_j^{\mathcal M_T}]=0\) and
\(\E_{\mathcal M_T}[(\epsilon_j^{\mathcal M_T})^2]=1\).
For all sufficiently large \(T\), \(u_T\ge1\), and hence
\(0<v_T\le1\).  Since \(p>2\), it follows that
\begin{align*}
  \E_{\mathcal M_T}\!\left[|\epsilon_j^{\mathcal M_T}|^p\right]
  &=(1-q_T)v_T^p+q_Tu_T^p
  \le(1-q_T)v_T^2+q_Tu_T^p\\
  &=1-q_Tu_T^2
    +\kappa_*K_*^p\varepsilon_T^{p+1}T^{p/2-1}
  \le1+\kappa_*K_*^pc_*^{p+1}.
\end{align*}
Choosing \(c_*>0\) sufficiently small that
\(\kappa_*K_*^pc_*^{p+1}\le M-1\) ensures
\(\E_{\mathcal M_T}[|\epsilon_j^{\mathcal M_T}|^p]\le M\).

Set \(\mathfrak C_{\mathrm{osc}}=\{\mathcal M_T:T\ge T_0\}\).  Uniformly over
\(\mathcal M\in\mathfrak C_{\mathrm{osc}}\),
\cite[Corollary~2.5]{dereich2019general} verifies
Assumption~\ref{ass:rate-trajectory-stability} for every
\(q\in[1,p/2]\).  Assumptions~\ref{ass:rate-linear-dynamics}--
\ref{ass:rate-covariance-stabilization} follow directly from
\(g_{\mathcal M}(x)=x\) and the defining i.i.d.\ noise conditions.
Let \(\Phi_T^{\mathcal M_T}\) be the corresponding linearly interpolated
linear-SA path.
To simplify the notation, whenever \(\eta_j^{-1}\) appears as a time increment,
summation limit, or discrete index in this argument, it denotes
\(\lfloor\eta_j^{-1}\rfloor\); we omit the integer-part symbol throughout.
Define the Borel set
\begin{equation}
  \mathcal J_T^{\mathrm{rough}}
  \coloneqq
  \left\{x:\
    \max_j
    \left|x\!\left(\frac{j+\eta_j^{-1}}{T}\right)
      -x\!\left(\frac{j}{T}\right)\right|
    \ge \frac{(1-e^{-1})K_*}{2}\varepsilon_T
  \right\},
  \label{eq:rate-iid-lower-separating-set}
\end{equation}
where the maximum is over the integer indices for which both displayed time
points belong to \([0,1]\).

For \(\lceil T/3\rceil\le j\le\lfloor T/2\rfloor\), let
\(\mathcal E_j\) be the event that
\(|\epsilon_j^{\mathcal M_T}|=u_T\) and
\(|\epsilon_i^{\mathcal M_T}|=v_T\) for every \(i\ne j\),
\(1\le i\le T\).
For all sufficiently large \(T\), the displayed time points in
\eqref{eq:rate-iid-lower-separating-set} belong to \([0,1]\).  By the
definition of \(\Phi_T^{\mathcal M_T}\) and the stochastic approximation
 update rule,
\begin{align*}
\Phi_T^{\mathcal M_T}\!\left(\frac{j+\eta_j^{-1}}{T}\right)
  -\Phi_T^{\mathcal M_T}\!\left(\frac{j}{T}\right)
&=
  \frac{\epsilon_j^{\mathcal M_T}\eta_j}{\sqrt T}
  \sum_{r=1}^{\eta_j^{-1}}\prod_{k=j+1}^{j+r}(1-\eta_k)\\
&\qquad+\underbrace{\frac1{\sqrt T}
  \sum_{\substack{1\le i\le j+\eta_j^{-1}\\i\ne j}}
    \left(A_i^{j+\eta_j^{-1}}
      -\mathbf1_{\{i\le j\}}A_i^j\right)
      \epsilon_i^{\mathcal M_T}}_{\eqqcolon R_{j,T}},
\end{align*}
where \(A_i^n\) is the scalar specialization of the coefficient in
\eqref{eq:rate-recursion-coefficients}.
For all sufficiently large \(T\), \(\eta_j<1\) on the indicated range and
\(\eta_i\le\eta_j\) for \(i\ge j\). For the coefficient of
\(\epsilon_j^{\mathcal M_T}/\sqrt{T}\) in the preceding equation, we have
\begin{align*}
  \eta_j\sum_{r=1}^{\eta_j^{-1}}
    \prod_{k=j+1}^{j+r}(1-\eta_k)
  &\ge \eta_j\sum_{r=1}^{\eta_j^{-1}}(1-\eta_j)^r\\
  &=(1-\eta_j)\{1-(1-\eta_j)^{\eta_j^{-1}}\}
  \ge \frac{3(1-e^{-1})}{4}.
\end{align*}
Since \(|\epsilon_j^{\mathcal M_T}|=u_T=K_*\varepsilon_T\sqrt T\) on
\(\mathcal E_j\), we have
\begin{equation}
  \left|
    \Phi_T^{\mathcal M_T}\!\left(\frac{j+\eta_j^{-1}}{T}\right)
    -\Phi_T^{\mathcal M_T}\!\left(\frac{j}{T}\right)
  \right|
  \ge \frac{3(1-e^{-1})K_*}{4}\varepsilon_T-|R_{j,T}|.
  \label{eq:rate-iid-lower-signal-remainder}
\end{equation}

Conditional on \(\mathcal E_j\), the variables
\((\epsilon_i^{\mathcal M_T})_{i\ne j}\) remain independent and symmetric,
with common absolute value \(v_T\le1\).  We first make the coefficients in \(R_{j,T}\)
explicit.  For \(i\le j\), the definition of \(A_i^n\) gives
\[
  A_i^{j+\eta_j^{-1}}-A_i^j
  =\eta_i\left\{\prod_{k=i+1}^{j}(1-\eta_k)\right\}
    \sum_{t=j+1}^{j+\eta_j^{-1}}
      \prod_{k=j+1}^{t}(1-\eta_k),
\]
whereas, for \(j<i\le j+\eta_j^{-1}\), the coefficient is simply
\(A_i^{j+\eta_j^{-1}}\).  Applying \eqref{eq:rate-contraction-sums}--
\eqref{eq:rate-deterministic-product-bounds} yields
\[
  \sum_{t=j+1}^{j+\eta_j^{-1}}
    \prod_{k=j+1}^{t}(1-\eta_k)
    \le C\eta_j^{-1},
  \qquad
  \sum_{i=1}^{j-1}\eta_i^2
    \prod_{k=i+1}^{j}(1-\eta_k)^2
    \le C\eta_j,
\]
and \(\sup_{n\ge i}A_i^n\le C\).
Since the conditional means of
\((\epsilon_i^{\mathcal M_T})_{i\ne j}\) are zero, all
cross terms vanish, and hence
\begin{align*}
  \E_{\mathcal M_T}\!\left[R_{j,T}^2\mid\mathcal E_j\right]
  &=\frac{v_T^2}{T}
    \sum_{\substack{1\le i\le j+\eta_j^{-1}\\i\ne j}}
    \left(A_i^{j+\eta_j^{-1}}
      -\mathbf1_{\{i\le j\}}A_i^j\right)^2\\
  &\le \frac{C}{T}\left\{
      \eta_j^{-2}\sum_{i=1}^{j-1}\eta_i^2
        \prod_{k=i+1}^{j}(1-\eta_k)^2
      +\sum_{i=j+1}^{j+\eta_j^{-1}}
        \big(A_i^{j+\eta_j^{-1}}\big)^2
    \right\}
  \le \frac{C}{T\eta_j}.
\end{align*}
Note that the threshold in \eqref{eq:rate-iid-lower-separating-set} and
\eqref{eq:rate-iid-lower-signal-remainder} imply
\[
  \{\Phi_T^{\mathcal M_T}\notin\mathcal J_T^{\mathrm{rough}}\}
    \cap\mathcal E_j
  \subseteq
  \left\{|R_{j,T}|\ge
    \frac{(1-e^{-1})K_*}{4}\varepsilon_T\right\}
    \cap\mathcal E_j.
\]
Since \(\eta_j\asymp T^{-\alpha}\) in the indicated range and
\(\rateOsc<\rateSm\), Chebyshev's inequality now yields, uniformly in these
indices,
\begin{align*}
  \Pp_{\mathcal M_T}\!\left(
    \Phi_T^{\mathcal M_T}\notin\mathcal J_T^{\mathrm{rough}}
    \mid\mathcal E_j\right)
  &\le
    \Pp_{\mathcal M_T}\!\left(
      |R_{j,T}|\ge\frac{(1-e^{-1})K_*}{4}\varepsilon_T
      \mathrel{\Big|}\mathcal E_j\right)\\
  &\le \frac{16}{(1-e^{-1})^2K_*^2\varepsilon_T^2}
    \E_{\mathcal M_T}\!\left[R_{j,T}^2\mid\mathcal E_j\right]
  \le \frac{C}{T\eta_j\varepsilon_T^2}=o(1).
\end{align*}
Combine this with
the fact that the events \(\mathcal E_j\) are disjoint.  Consequently,
\begin{align}
  \Pp_{\mathcal M_T}\!\left(
    \Phi_T^{\mathcal M_T}\in\mathcal J_T^{\mathrm{rough}}\right)
  &\ge (1-o(1))
    \sum_{j=\lceil T/3\rceil}^{\lfloor T/2\rfloor}
      \Pp_{\mathcal M_T}(\mathcal E_j)
  \asymp Tq_T(1-q_T)^{T-1}
   \asymp\varepsilon_T.
  \label{eq:rate-iid-lower-source-probability}
\end{align}

We next control the same set under Wiener measure.  Put
\(\delta_T=c_0\varepsilon_T\), where \(c_0>0\) will be chosen sufficiently
small.  If \(y\in(\mathcal J_T^{\mathrm{rough}})^{\delta_T}\), then according
to Definition~\ref{def:rate-prokhorov}, there are
\(x\in\mathcal J_T^{\mathrm{rough}}\) and \(\lambda\in\Lambda\) such that
\begin{equation}
  \sup_{s<t}
  \left|\log\frac{\lambda(t)-\lambda(s)}{t-s}\right|<\delta_T,
  \qquad
  \sup_r|x(\lambda(r))-y(r)|<\delta_T.
  \label{eq:log-slope}
\end{equation}
For an index \(j\) realizing the event in
\eqref{eq:rate-iid-lower-separating-set}, set
\(
  r_0=\lambda^{-1}\!\left(\frac{j}{T}\right),
  \,
  r_1=\lambda^{-1}\!\left(\frac{j+\eta_j^{-1}}{T}\right)
\).
Equation~\eqref{eq:log-slope} gives
\[
  0<r_1-r_0
  \le e^{\delta_T}\frac{\eta_j^{-1}}{T}
  \le CT^{\alpha-1},
  \qquad
  |y(r_1)-y(r_0)|
  \ge\left\{\frac{(1-e^{-1})K_*}{2}-2c_0\right\}\varepsilon_T.
\]
Choose \(c_0<(1-e^{-1})K_*/8\).  Lemma~\ref{lem:rate-brownian-modulus}
then gives
\begin{align}
  \Pp\!\left(W_\star^{(1)}\in
    (\mathcal J_T^{\mathrm{rough}})^{\delta_T}\right)
  &\le CT^{1-\alpha}
    \exp\{-cT^{1-\alpha}\varepsilon_T^2\}\notag\\
  &=CT^{1-\alpha}
    \exp\{-cT^{2(\rateSm-\rateOsc)}\}
   =o(\varepsilon_T).\notag
\end{align}
Hence this implies that, for all
sufficiently large \(T\),
\[
  \Pp_{\mathcal M_T}\!\left(
    \Phi_T^{\mathcal M_T}\in\mathcal J_T^{\mathrm{rough}}\right)
  >
  \Pp\!\left(W_\star^{(1)}\in
    (\mathcal J_T^{\mathrm{rough}})^{\delta_T}\right)+\delta_T.
\]
Thus the defining Prokhorov inequality fails at radius \(\delta_T\), and
\[
  d_{\mathrm P}(\Phi_T^{\mathcal M_T},W_\star^{(1)})
  \ge c_0\varepsilon_T
  =c_0c_*T^{-\frac{p-2}{2(p+1)}}.
\]
Because \(\mathcal M_T\in\mathfrak C_{\mathrm{osc}}\), the last display proves
\eqref{eq:linear-iid-leading-minimax}.
\end{proof}

\subsection{Sharp intrinsic rate in the scalar Gaussian model}
\label{app:linear-gaussian-j1-sharpness}

\begin{proof}[Proof of Theorem~\ref{thm:linear-gaussian-j1-sharpness}]
Let \(\mathcal M_{\mathrm{sm}}\) be the oracle defining
\(\mathfrak C_{\mathrm{sm}}\), and write
\(\Phi_T=\Phi_T^{\mathcal M_{\mathrm{sm}}}\).
For this oracle, \(G_{\mathcal M_{\mathrm{sm}}}
=S_{\mathcal M_{\mathrm{sm}}}=1\), and hence
\(Z^{\mathcal M_{\mathrm{sm}}}=W_\star^{(1)}\).
Since \(g(x_t)=\Delta_t\), the scalar
recursion is
\begin{equation}
  \Delta_{t+1}=(1-\eta_t)\Delta_t+\eta_t\epsilon_t,
  \qquad \Delta_1=0.
  \label{eq:linear-gaussian-direct-recursion}
\end{equation}
Iterating this recursion gives the exact filtered-Gaussian representation
\[
  \Delta_{t+1}
  =\sum_{j=1}^t\eta_j
    \left\{\prod_{i=j+1}^t(1-\eta_i)\right\}\epsilon_j,
  \qquad t\ge1,
\]
where an empty product equals one.  Consequently, at every grid point,
\begin{equation}
\begin{split}
  \Phi_T(k/T)
  &=\frac1{\sqrt T}\sum_{t=1}^k\Delta_{t+1}\\
  &=\frac1{\sqrt T}\sum_{j=1}^k
    \left[
      \eta_j\sum_{t=j}^k\prod_{i=j+1}^t(1-\eta_i)
    \right]\epsilon_j,
  \qquad 0\le k\le T.
\end{split}
  \label{eq:linear-gaussian-filtered-path}
\end{equation}
Thus every finite collection of grid values and grid increments of
\(\Phi_T\) is jointly centered Gaussian.

We next establish the variance scale of the iterates.  Since \(\Delta_t\)
is \(\cF_{t-1}\)-measurable and \(\epsilon_t\) is independent of
\(\cF_{t-1}\), centered, and has variance one,
\eqref{eq:linear-gaussian-direct-recursion} gives
\[
  \E\!\left[\Delta_{t+1}^2\right]
  =(1-\eta_t)^2\E\!\left[\Delta_t^2\right]+\eta_t^2.
\]
For the power-law step size,
\[
  \frac{1-\eta_{t+1}/\eta_t}{\eta_t}
  =\frac{1-\{t/(t+1)\}^{\alpha}}{\eta t^{-\alpha}}
  =O(t^{\alpha-1})\longrightarrow0.
\]
Choose \(t_0\) sufficiently large that, for every \(t\ge t_0\),
\[
  0<\eta_t\le\frac12,
  \qquad
  \eta_{t+1}\ge\eta_t(1-\eta_t/2).
\]
Choose \(C_2\ge2\) sufficiently large that
\(\E[\Delta_{t_0}^2]\le C_2\eta_{t_0}\).  If inductively
\(\E[\Delta_t^2]\le C_2\eta_t\), then
\begin{align*}
  \E\!\left[\Delta_{t+1}^2\right]
  &\le C_2\eta_t(1-\eta_t)^2+\eta_t^2\\
  &=C_2\eta_t
    \left\{(1-\eta_t)^2+\frac{\eta_t}{C_2}\right\}
  \le C_2\eta_t(1-\eta_t/2)
  \le C_2\eta_{t+1}.
\end{align*}
The penultimate inequality uses \(0<\eta_t\le1/2\) and
\(C_2\ge2\).  Induction therefore proves that
\(\E[\Delta_t^2]\le C_2\eta_t\) for every \(t\ge t_0\).  In
particular, for all sufficiently large \(T\),
\begin{equation}
  \sup_{T/6\le t\le5T/6}\E\!\left[\Delta_t^2\right]
  \le CT^{-\alpha}.
  \label{eq:linear-gaussian-interior-variance}
\end{equation}

Fix the nested intervals
\(
  I_0=[1/3,2/3],\,
  I_1=[1/4,3/4]
\)
and constants \(0<u<v<1\).  Let \(c_0>0\), to be chosen below, and define
\(
  h_T=c_0T^{\alpha-1}.
\)
Choose \(0<c_1<(v-u)/2\) and put
\[
  \delta_T= c_1 \big(2h_T\log (1/h_T)\big)^{1/2}.
\]
Because \(h_T\downarrow0\) and \(h_T\log(1/h_T)\to0\), we have
\(\delta_T\to0\).

We next show that the stochastic-approximation path has small oscillations
at the scale \(h_T\).  For all sufficiently large \(T\), suppose that
\(j/T,k/T\in[1/5,4/5]\) and \(0<k-j\le3c_0T^\alpha\).  By the definition of
\(\Phi_T\), Minkowski's inequality in \(L^2\), and
\eqref{eq:linear-gaussian-interior-variance},
\begin{align*}
  \operatorname{Var}^{1/2}
  \!\left\{\Phi_T(k/T)-\Phi_T(j/T)\right\}
  &=\left\|
    \frac1{\sqrt T}\sum_{t=j+1}^k\Delta_{t+1}
  \right\|_{L^2}\\
  &\le\frac1{\sqrt T}\sum_{t=j+1}^k
    \left\|\Delta_{t+1}\right\|_{L^2}
  \le CT^{-1/2}(k-j)T^{-\alpha/2}.
\end{align*}
After squaring and using \(k-j\le3c_0T^\alpha\), we obtain, for a constant
\(C_0\) independent of \(c_0,T,j,k\),
\begin{equation}
  \operatorname{Var}
  \!\left\{\Phi_T(k/T)-\Phi_T(j/T)\right\}
  \le C_0c_0^2T^{\alpha-1}=C_0c_0h_T.
  \label{eq:linear-gaussian-grid-increment-variance}
\end{equation}
Further, consider two time points \(t,s \in I_1\),
 owing to the convexity of linear interpolation, we have
\[
  |\Phi_T(t)-\Phi_T(s)|
  \le\max_{a,b\in\{0,1\}}
  \left|\Phi_T\{(\lfloor t T \rfloor +a)/T\}
  -\Phi_T\{(\lfloor s T \rfloor +b)/T\}\right|.
\]
If additionally \(|t-s|\le e^{\delta_T}h_T\), then, for all sufficiently
large \(T\), \(e^{\delta_T}\le2\), \(1/T<1/20\), and
\(2\le c_0T^\alpha\).  Therefore all four grid endpoints lie in
\([1/5,4/5]\), and
\[
  |(\lfloor tT\rfloor +a)-(\lfloor sT\rfloor +b)|
  \le T|t-s|+2
  \le2c_0T^\alpha+2
  \le3c_0T^\alpha.
\]
There are at most \(T+1\) choices for the first endpoint and at most
\(6c_0T^\alpha+3\) choices for the second one.  Thus the number of ordered
grid pairs that can occur is at most \(CT^{1+\alpha}\). Thus at most \(CT^{1+\alpha}\) instances of the bound \eqref{eq:linear-gaussian-grid-increment-variance} suffice to cover every pair
\(t,s \in I_1\) with \(|t-s|\le e^{\delta_T} h_T\) used
by the interpolation bound.

Every such grid increment is centered Gaussian by
\eqref{eq:linear-gaussian-filtered-path}, and its variance is bounded by
\eqref{eq:linear-gaussian-grid-increment-variance}.  The two-sided Gaussian
tail bound and a union bound therefore yield
\begin{equation}
\begin{split}
  &\Pp\!\left(
    \sup_{\substack{s,t\in I_1\\|t-s|\le e^{\delta_T}h_T}}
    |\Phi_T(t)-\Phi_T(s)|>u\big(2h_T\log (1/h_T)\big)^{1/2}
  \right)\\
  &\qquad\le
  2CT^{1+\alpha}
  \exp\left\{-\frac{u^2h_T\log (1/h_T)}{C_0c_0h_T}\right\}\\
  &\qquad=2CT^{1+\alpha}
  h_T^{\frac{u^2}{C_0c_0}} = 2Cc_0^{\frac{u^2}{C_0c_0}}T^{(1+\alpha)
  - \frac{(1-\alpha)u^2}{C_0c_0}} \longrightarrow 0.
\end{split}
  \label{eq:linear-gaussian-sa-window-tail}
\end{equation}
Here the last limit holds 
as long as we choose \(c_0\in(0,1]\) sufficiently small that
\(u^2(1-\alpha)/(C_0c_0)>1+\alpha\).
Now, define the testing event
\[
  \mathcal J_T^{\mathrm{smooth}}
  =\left\{
    x\in C([0,1],\R):
    \sup_{\substack{s,t\in I_1\\|t-s|\le e^{\delta_T}h_T}}
    |x(t)-x(s)|\le \sqrt{2u^2h_T\log(1/h_T)}
  \right\}.
\]
It is a Borel subset of \(D([0,1],\R)\). From 
\eqref{eq:linear-gaussian-sa-window-tail} we already know that
\begin{equation}
  \Pp(\Phi_T\in\mathcal J_T^{\mathrm{smooth}})\longrightarrow1.
  \label{eq:linear-gaussian-sa-window-small}
\end{equation}

We next show explicitly that Brownian motion has larger oscillations at the
same scale.  Let
\(
  m_T=\left\lfloor\frac1{3h_T}\right\rfloor\), and consider the following
  \(m_T + 1\) points
\[  
  r_{\ell,T}=\frac13+\ell h_T,
  \quad 0\le\ell\le m_T.
\]
All these points lie in \(I_0\), and \(m_T\ge(6h_T)^{-1}\) for all
sufficiently large \(T\).  The \(m_T\) increments
\[
  W_\star^{(1)}(r_{\ell,T})-W_\star^{(1)}(r_{\ell-1,T}),
  \qquad 1\le\ell\le m_T,
\]
are independent and have distribution \(N(0,h_T)\). 
Consequently, for \(Z\sim N(0,1)\),
\begin{align*}
  &\Pp \Bigg(
    \sup_{\substack{s,t\in I_0\\|t-s|\le h_T}}
    |W_\star^{(1)}(t)-W_\star^{(1)}(s)|
    \le \sqrt{2v^2h_T\log(1/h_T)}
  \Bigg)\\
  &\qquad\le 
  \Pp \left(\sup_{1\le \ell \le m_T}\big|
  W_\star^{(1)}(r_{\ell,T})-W_\star^{(1)}(r_{\ell-1,T})
  \big| \le \sqrt{2v^2h_T\log(1/h_T)}\right)\\
  &\qquad= \prod_{\ell = 1}^{m_T}\Pp\left(
    \big|
  W_\star^{(1)}(r_{\ell,T})-W_\star^{(1)}(r_{\ell-1,T})
  \big| \le \sqrt{2v^2h_T\log(1/h_T)}
  \right)
  \\
  &\qquad\le
  \left[
    1-\Pp \left(|Z|>v\frac{\sqrt{2h_T\log (1/h_T)}}{\sqrt{h_T}}\right)
  \right]^{m_T}
  =\left[
    1-\Pp \left(|Z|>v\sqrt{2\log (1/h_T)}\right)
  \right]^{m_T}.
\end{align*}
The Mills lower bound
\(\Pp(|Z|>x)\ge cx^{-1}e^{-x^2/2}\) for \(x\ge1\) gives, for all
sufficiently large \(T\),
\[
  \Pp\!\left(|Z|>v\sqrt{2\log(1/h_T)}\right)
  \ge c\log^{-\frac{1}{2}}(1/h_T)e^{-v^2\log(1/h_T)}
  =c\frac{h_T^{v^2}}{\sqrt{\log(1/h_T)}}.
\]
Using \(1-z\le e^{-z}\) and \(m_T\ge(6h_T)^{-1}\), we conclude that
\begin{equation}
\begin{split}
  &\Pp\!\left(
    \sup_{\substack{s,t\in I_0\\|t-s|\le h_T}}
    |W_\star^{(1)}(t)-W_\star^{(1)}(s)|
    \le \sqrt{2v^2h_T\log(1/h_T)}
  \right)\\
  &\qquad\le \left(1 - \frac{ch_T^{v^2}}{\sqrt{\log(1/h_T)}}
  \right)^{m_T}
  \le
  \exp\left\{-\frac{c h_T^{v^2-1}}{6\sqrt{\log(1/h_T)}}\right\}
  \longrightarrow0.
\end{split}
  \label{eq:linear-gaussian-brownian-window-large}
\end{equation}
The last limit is explicit since \(v<1\).

It remains to transfer this probability separation through the \(J_1\)
Skorokhod metric.  Suppose that \(x\in\mathcal J_T^{\mathrm{smooth}}\) and
\(d_{\mathrm S}(x,y)<\delta_T\).  By~\eqref{eq:rate-j1-metric}, there is a
time change \(\lambda\in\Lambda\) such that
\[
  \sup_{0\le s<t\le1}
  \left|\log\frac{\lambda(t)-\lambda(s)}{t-s}\right|<\delta_T,
  \qquad
  \sup_{0\le r\le1}|x\{\lambda(r)\}-y(r)|<\delta_T.
\]
It is not hard to compute from
the first inequality that, for every \(s<t\),
\begin{equation}
  e^{-\delta_T}(t-s)<\lambda(t)-\lambda(s)
  <e^{\delta_T}(t-s),
  \qquad
  \sup_{0\le r\le1}|\lambda(r)-r|
  \le e^{\delta_T}-1\le2\delta_T.
  \label{eq:linear-gaussian-time-change-control}
\end{equation}
 Since
\(\operatorname{dist}(I_0,I_1^c)=1/12\) and \(\delta_T\to0\),
\eqref{eq:linear-gaussian-time-change-control} gives
\(\lambda(I_0)\subset I_1\) for all sufficiently large \(T\).

Now consider a path \(y\in (\mathcal J_T^{\mathrm{smooth}})^{\delta_T}\),
take \(s,t\in I_0\) with \(|t-s|\le h_T\).  Then
\(|\lambda(t)-\lambda(s)|\le e^{\delta_T}h_T\), and hence
there is a path \(x \in \mathcal J_T^{\mathrm{smooth}}\) satisfying
\begin{align*}
  |y(t)-y(s)|
  &\le |y(t)-x\{\lambda(t)\}|
    +|x\{\lambda(t)\}-x\{\lambda(s)\}|
  +|x\{\lambda(s)\}-y(s)|\\
  &<2\delta_T+ \sqrt{2u^2 h_T\log(1/h_T)}\\
  &=(u+2c_1)\sqrt{2h_T\log (1/h_T)}<v \sqrt{2h_T\log (1/h_T)}.
\end{align*}
We thus have proved the pathwise inclusion
\begin{equation}
  (\mathcal J_T^{\mathrm{smooth}})^{\delta_T}
  \subset
  \left\{y:
    \sup_{\substack{s,t\in I_0\\|t-s|\le h_T}}
    |y(t)-y(s)|< \sqrt{2v^2h_T\log(1/h_T)}
  \right\}.
  \label{eq:linear-gaussian-j1-neighborhood}
\end{equation}

Equations~\eqref{eq:linear-gaussian-sa-window-small},
\eqref{eq:linear-gaussian-brownian-window-large}, and
\eqref{eq:linear-gaussian-j1-neighborhood} show that
\[
  \Pp(\Phi_T\in\mathcal J_T^{\mathrm{smooth}})\longrightarrow1,
  \qquad
  \Pp(W_\star^{(1)}\in(\mathcal J_T^{\mathrm{smooth}})^{\delta_T})\longrightarrow0.
\]
Because also \(\delta_T\to0\), for all sufficiently large \(T\),
\[
  \Pp(\Phi_T\in\mathcal J_T^{\mathrm{smooth}})
  >\Pp(W_\star^{(1)}\in(\mathcal J_T^{\mathrm{smooth}})^{\delta_T})+\delta_T.
\]
Therefore, by Definition~\ref{def:rate-prokhorov},
\[
  d_{\mathrm P}(\Phi_T,W_\star^{(1)})
  \ge\delta_T=c_1\sqrt{2h_T\log(1/h_T)}
  \asymp T^{-\frac{1-\alpha}{2}}\sqrt{\log T}.
\]
Since \(\mathfrak C_{\mathrm{sm}}=\{\mathcal M_{\mathrm{sm}}\}\), taking the
supremum over \(\mathfrak C_{\mathrm{sm}}\) proves
\eqref{eq:linear-gaussian-j1-order} and completes the proof.
\end{proof}

\subsection{Nonlinear obstruction under a trajectory moment bound}
\label{app:rate-nonlinear-oracle-lower}

\begin{proof}[Proof of Theorem~\ref{thm:rate-nonlinear-oracle-lower}]
\noindent\textit{Step 1: Construction.}
For every integer \(T\ge1\), introduce an oracle \(\mathcal M_T\) and put
\[
  \rho_T\coloneqq T^{-\frac{(\alpha-1/2)q}{q+1}}.
\]
Choose \(\lambda>0\) such that \(\lambda\eta\le1/4\).  Let
\(\chi:\R\to[0,1]\) be smooth, with
\(\chi(z)=0\) for \(z\le1\) and \(\chi(z)=1\) for \(z\ge2\).
For a constant \(c_0>0\), to be fixed below, define
\[
  g_{\mathcal M_T}(x)
  \coloneqq
  \begin{cases}
    \lambda x, & x\le0,\\
    \left[
      \lambda-\{\lambda-c_0T^{\alpha-1}\}
      \chi\!\left(\dfrac{\sqrt T\,x}{\rho_T^{3/2}}\right)
      \chi(3-x)
    \right]x, & x>0.
  \end{cases}
\]
After increasing a fixed lower index \(T_0\), we have
\(0<c_0T^{\alpha-1}\le\lambda\) and \(\rho_T\le1/2\) for every
\(T\ge T_0\).  Each \(g_{\mathcal M_T}\) is differentiable, is exactly
\(\lambda x\) on
\((-\infty,\rho_T^{3/2}/\sqrt T]\), and satisfies
\(xg_{\mathcal M_T}(x)>0\) for \(x\ne0\).  Hence \(g_{\mathcal M_T}\) satisfies
Assumption~\ref{ass:rate-linear-dynamics} with
\(G_{\mathcal M_T}=\lambda\): its quadratic
remainder obeys
\[
  |g_{\mathcal M_T}(x)-\lambda x|
  \le \lambda\frac{\sqrt T}{\rho_T^{3/2}}x^2,
  \qquad x\in\R.
\]
The coefficient in this quadratic bound is finite for each \(T\), although it
is not uniform in \(T\).  Moreover,
\(\sup_{x\in\R}|g_{\mathcal M_T}(x)-\lambda x|\le2\lambda\), because the second cutoff
vanishes for \(x\ge2\). Alternatively, the remainder is zero on a local ball of radius $\rho_T^{3/2}/\sqrt T$, but that radius shrinks with $T$. Neither formulation supplies both a fixed positive local radius and a uniform quadratic-remainder constant.

For each \(T\ge T_0\), let \(\omega_T\) be Bernoulli with success
probability \(\rho_T\) under \(\mathbb P_{\mathcal M_T}\).
Let \(\mathcal F_0^{\mathcal M_T}\) be trivial, put
\(\mathcal F_t^{\mathcal M_T}=\sigma(\omega_T)\) for \(t\ge1\), and define
\[
  \epsilon_1^{\mathcal M_T}
  \coloneqq
  \frac{\rho_T}{\eta\sqrt T(1-\rho_T)}(\omega_T-\rho_T),
  \qquad
  \epsilon_t^{\mathcal M_T}\coloneqq0\quad(t\ge2).
\]
These choices specify the stochastic oracle
\(\mathcal M_T=\big(g_{\mathcal M_T},
(\epsilon_t^{\mathcal M_T})_{t\ge1},
(\mathcal F_t^{\mathcal M_T})_{t\ge0},
\mathbb P_{\mathcal M_T}\big)\).

Write \(\Delta_t^{\mathcal M_T}=x_t^{\mathcal M_T}-x^\star\).  The recursion in the
theorem, with \(\eta_t=\eta t^{-\alpha}\), becomes
\[
  \Delta_{t+1}^{\mathcal M_T}
  =\Delta_t^{\mathcal M_T}
   -\eta_tg_{\mathcal M_T}(\Delta_t^{\mathcal M_T})
   +\eta_t\epsilon_t^{\mathcal M_T}.
\]

The two possible values of the first update are
\[
  \Delta_2^{\mathcal M_T}
  =
  \begin{cases}
    \rho_T/\sqrt T, & \omega_T=1,\\[2pt]
    -\rho_T^2/\{\sqrt T(1-\rho_T)\}, & \omega_T=0.
  \end{cases}
\]
On \(\{\omega_T=0\}\), the trajectory remains negative and follows the
exact-linear recursion.  On \(\{\omega_T=1\}\),
\(\Delta_2^{\mathcal M_T}=\rho_T/\sqrt T\).  Choose \(c_0\) sufficiently small and
then increase \(T_0\) so that, for some \(\kappa>0\),
\[
  \inf_{T\ge T_0}
  \prod_{s=2}^{T-1}(1-c_0T^{\alpha-1}\eta_s)
  \ge\kappa,
  \qquad
  2\rho_T^{1/2}<\kappa.
\]
Indeed, the first inequality follows from
\(c_0T^{\alpha-1}\sum_{s=2}^{T-1}\eta_s\le Cc_0\) and
\(\log(1-z)\ge-2z\) for \(0\le z\le1/2\).

We now show inductively that, on \(\{\omega_T=1\}\),
\[
  \Delta_t^{\mathcal M_T}
  =
  \frac{\rho_T}{\sqrt T}
  \prod_{s=2}^{t-1}(1-c_0T^{\alpha-1}\eta_s),
  \qquad
  \frac{\kappa\rho_T}{\sqrt T}
  \le\Delta_t^{\mathcal M_T}
  \le\frac{\rho_T}{\sqrt T},
  \qquad 2\le t\le T.
\]
The claim is immediate for \(t=2\).  Suppose that it holds at some
\(t<T\).  Then
\[
  \Delta_t^{\mathcal M_T}
  \ge \frac{\kappa\rho_T}{\sqrt T}
  > \frac{2\rho_T^{3/2}}{\sqrt T},
\]
while \(\Delta_t^{\mathcal M_T}\le\rho_T/\sqrt T\le1\).  Thus both cutoffs equal
one, and the definition of \(g_{\mathcal M_T}\) gives
\(g_{\mathcal M_T}(\Delta_t^{\mathcal M_T})
=c_0T^{\alpha-1}\Delta_t^{\mathcal M_T}\).  Since
\(\epsilon_t^{\mathcal M_T}=0\), the recursion yields
\[
  \Delta_{t+1}^{\mathcal M_T}
  =(1-c_0T^{\alpha-1}\eta_t)\Delta_t^{\mathcal M_T}
  =\frac{\rho_T}{\sqrt T}
   \prod_{s=2}^{t}(1-c_0T^{\alpha-1}\eta_s).
\]
Every factor lies in \((0,1]\), so this partial product lies between the
full product and \(1\), and hence between \(\kappa\) and \(1\).  The same
bounds therefore hold at time \(t+1\), which completes the induction.

Collecting these oracles over all construction horizons gives the fixed class
\[
  \mathfrak C_{\mathrm{nl}}
  \coloneqq \{\mathcal M_T:T\ge T_0\}.
\]

\noindent\textit{Step 2: Verification of the assumptions.}
The noise increments are martingale differences, and, uniformly in \(T\),
\[
  \E_{\mathcal M_T}\!\left[|\epsilon_1^{\mathcal M_T}|^p\right]
  \le C\rho_T\left(\frac{\rho_T}{\sqrt T}\right)^p
  \le C,
  \qquad
  \E_{\mathcal M_T}\!\left[
    |\epsilon_t^{\mathcal M_T}|^p
    \mid\mathcal F_{t-1}^{\mathcal M_T}\right]=0
  \quad(t\ge2).
\]
Moreover,
\[
  \E_{\mathcal M_T}\!\left[(\epsilon_1^{\mathcal M_T})^2\right]
  =\frac{\rho_T^3}{\eta^2T(1-\rho_T)}\le C.
\]
Their limiting variance is \(S_{\mathcal M_T}=0\).  For every integer
\(H\ge2\),
\[
  \E_{\mathcal M_T}\!\left[
    \max_{0\le n\le H}
    \left|
      \frac1H\sum_{t=1}^n
      \E_{\mathcal M_T}\!\left[
        (\epsilon_t^{\mathcal M_T})^2
        \mid\mathcal F_{t-1}^{\mathcal M_T}\right]
    \right|
  \right]
  =
  \frac1H\E_{\mathcal M_T}\!\left[(\epsilon_1^{\mathcal M_T})^2\right]
  \le CH^{-1}\le CH^{-3/4}.
\]
Thus Assumptions~\ref{ass:rate-martingale-noise} and
\ref{ass:rate-covariance-stabilization} hold with constants independent of
\(T\), limiting variance \(0\), and \(\gamma=1/4\).
Together with the properties of \(g_{\mathcal M_T}\) verified in Step~1, this establishes
Assumptions~\ref{ass:rate-linear-dynamics},
\ref{ass:rate-martingale-noise}, and
\ref{ass:rate-covariance-stabilization} for the constructed oracle.

It remains to verify
Assumption~\ref{ass:rate-trajectory-stability}.

First consider \(2\le t\le T\).  The induction in Step~1 gives the first
bound below.  For the second, use the displayed value of
\(\Delta_2^{\mathcal M_T}\), the identity
\(g_{\mathcal M_T}(x)=\lambda x\) for \(x\le0\), and
\(0<1-\lambda\eta_s\le1\):
\[
  |\Delta_t^{\mathcal M_T}|
  \le \frac{\rho_T}{\sqrt T}
  \quad\text{on }\{\omega_T=1\},
  \qquad
  |\Delta_t^{\mathcal M_T}|
  \le \frac{\rho_T^2}{\sqrt T(1-\rho_T)}
  \quad\text{on }\{\omega_T=0\}.
\]
Using \(\mathbb P_{\mathcal M_T}(\omega_T=1)=\rho_T\), \(\rho_T\le1/2\), and
\(q\ge1\), we obtain
\begin{align*}
  \E_{\mathcal M_T}\!\left[|\Delta_t^{\mathcal M_T}|^{2q}\right]
  &\le
  \rho_T\left(\frac{\rho_T}{\sqrt T}\right)^{2q}
  +(1-\rho_T)
   \left(\frac{\rho_T^2}{\sqrt T(1-\rho_T)}\right)^{2q}\\
  &\le C\rho_T^{2q+1}T^{-q}
  \le CT^{-\alpha q}
  \le Ct^{-\alpha q}.
\end{align*}
Here \(\rho_T^{2q+1}T^{-q}\le T^{-\alpha q}\).

Now let \(t>T\).  Since the noise vanishes after the first update, the
recursion gives
\begin{align*}
  |\Delta_t^{\mathcal M_T}|
  &\le \frac{\rho_T}{\sqrt T}
  \prod_{s=T}^{t-1}(1-c_0T^{\alpha-1}\eta_s)
  \le \frac{\rho_T}{\sqrt T}
  \exp\!\left[-c\{(t/T)^{1-\alpha}-1\}\right]
  &&\text{on }\{\omega_T=1\},\\
  |\Delta_t^{\mathcal M_T}|
  &\le \frac{2\rho_T^2}{\sqrt T}
  \prod_{s=T}^{t-1}(1-\lambda\eta_s)
  \le \frac{2\rho_T^2}{\sqrt T}
  \exp\!\left[-c\{(t/T)^{1-\alpha}-1\}\right]
  &&\text{on }\{\omega_T=0\}.
\end{align*}
The first line uses positivity and
\(g_{\mathcal M_T}(x)/x\ge c_0T^{\alpha-1}\) for \(x>0\). The second uses
\(g_{\mathcal M_T}(x)=\lambda x\) for \(x<0\) and
\(|\Delta_T^{\mathcal M_T}|\le2\rho_T^2/\sqrt T\).  Both lines use
\(
  T^{\alpha-1}\sum_{s=T}^{t-1}s^{-\alpha}
  \ge c\{(t/T)^{1-\alpha}-1\}
\).
Combining these two bounds with
\(\mathbb P_{\mathcal M_T}(\omega_T=1)=\rho_T\) gives
\[
  \E_{\mathcal M_T}\!\left[|\Delta_t^{\mathcal M_T}|^{2q}\right]
  \le
  CT^{-\alpha q}
  \exp\!\left[-2cq\{(t/T)^{1-\alpha}-1\}\right].
\]
Since
\(
  \sup_{y\ge1}y^{\alpha q}
  \exp[-2cq\{y^{1-\alpha}-1\}]<\infty
\), the last display is at most
\(CT^{-\alpha q}(T/t)^{\alpha q}=Ct^{-\alpha q}\).
The trajectory vanishes at \(t=1\).  Since
\(\eta_t^q=\eta^qt^{-\alpha q}\), this proves
Assumption~\ref{ass:rate-trajectory-stability}.

\noindent\textit{Step 3: Lower bound.}
At the horizon \(T\), on the event \(\{\omega_T=1\}\), all iterates remain
positive and
\[
  \Phi_T^{\mathcal M_T}(1)
  =\frac1{\sqrt T}\sum_{t=1}^T\Delta_{t+1}^{\mathcal M_T}
  \ge\frac{(T-1)\kappa\rho_T}{T}
  \ge\frac\kappa2\rho_T
\]
for all sufficiently large \(T\).  Because every time change in
\eqref{eq:rate-j1-metric} is onto,
\(d_{\mathrm S}(x,0)=\tnorm{x}\).  Let
\[
  \Gamma_T\coloneqq
  \{x:d_{\mathrm S}(x,0)\ge\kappa\rho_T/2\}.
\]
This is a closed, hence Borel, test set.  Moreover,
\(\mathbb P_{\mathcal M_T}
(\Phi_T^{\mathcal M_T}\in\Gamma_T)\ge\rho_T\).  If
\(0<\delta<c_1\rho_T\), where
\(c_1<\min\{\kappa/2,1\}\), the zero path does not belong to
\(\Gamma_T^\delta\), while \(\rho_T>\delta\).  The defining Prokhorov
inequality in \eqref{eq:rate-prokhorov-definition} therefore fails for
\(\delta\).  Hence
\[
  d_{\mathrm P}(\Phi_T^{\mathcal M_T},0)
  \ge c_1\rho_T
  =c_1T^{-\frac{(\alpha-1/2)q}{q+1}}.
\]
Since \(\mathcal M_T\in\mathfrak C_{\mathrm{nl}}\), taking the supremum over
\(\mathfrak C_{\mathrm{nl}}\) proves
\eqref{eq:rate-nonlinear-oracle-lower}.
\end{proof}



\endgroup

\end{document}